\documentclass[11pt,reqno,oneside]{amsart}

\usepackage[utf8]{inputenc}
\usepackage[T1]{fontenc}
\usepackage[top=3cm,bottom=3cm,left=3cm,right=3cm]{geometry}

\usepackage{hyperref}
\usepackage{mathtools,amsmath}
\usepackage{thmtools}
\usepackage[capitalize, nameinlink]{cleveref}
\usepackage{graphicx}

\usepackage{xcolor}
\definecolor{red}{rgb}{1,0,0}
\hypersetup{
    colorlinks,
    linkcolor={red!50!black},
    citecolor={blue!50!black},
    urlcolor={blue!80!black}
}

\usepackage{amsfonts}
\usepackage{amssymb}
\usepackage{latexsym}
\usepackage{mathrsfs}
\usepackage{textcomp}
\usepackage{longtable}
\usepackage{booktabs}
\usepackage{graphicx}
\usepackage{setspace}
\usepackage{nicefrac}
\usepackage{enumerate}
\usepackage{wasysym}
\usepackage[normalem]{ulem}
\usepackage{upgreek}
\usepackage{paralist}
\usepackage{etoolbox}
\usepackage{mathdots}
\usepackage{wrapfig}
\usepackage{floatflt}
\usepackage{tensor} 
\usepackage{lscape}
\usepackage{stmaryrd}

\usepackage{tikz}
\usetikzlibrary{arrows}
\usetikzlibrary{matrix}
\usetikzlibrary{positioning}
\usetikzlibrary{calc}

\usepackage[all,cmtip]{xy} 
\usepackage[labelfont=bf, font={small}]{caption}[2005/07/16]

\newtheorem{thm}{Theorem}[section]

\usepackage{algorithm}
\usepackage{algpseudocode}

\newtheorem{lm}[thm]{Lemma}
\newtheorem{prop}[thm]{Proposition}
\newtheorem{cor}[thm]{Corollary}
\theoremstyle{definition}

\newtheorem{re}[thm]{Remark}
\newtheorem{ex}[thm]{Example}

\crefname{thm}{Theorem}{Theorems}
\crefname{lm}{Lemma}{Lemmas}
\crefname{prop}{Proposition}{Propositions}
\crefname{cor}{Corollary}{Corollaries}
\crefname{definition}{Definition}{Definitions}
\crefname{re}{Remark}{Remarks}
\crefname{ex}{Example}{Examples}
\crefname{algorithm}{Algorithm}{Algorithms}

\numberwithin{equation}{section}

\newcommand{\CC}{{\mathbb C}}
\newcommand{\RR}{{\mathbb R}}
\newcommand{\QQ}{{\mathbb Q}}
\newcommand{\ZZ}{{\mathbb Z}}

\newcommand{\PP}{{\mathbb P}}

\newcommand{\im}{\operatorname{im}}

	{\begin{figure} \begin{center}}%
	{\end{center} \end{figure}}

\newcommand{\id}{\operatorname{id}}

\newcommand{\rk}{\operatorname{rk}}

\newcommand{\diag}{\operatorname{diag}\nolimits}

\newcommand{\GL}{\operatorname{GL}\nolimits}
\newcommand{\Hom}{\operatorname{Hom}\nolimits}
\newcommand{\Id}{\operatorname{Id}\nolimits}

\newcommand{\ad}{\operatorname{ad}\nolimits}

\newcommand{\liea}[1]{\mathfrak{#1}}

\newcommand{\dto}{\dashrightarrow}

\renewcommand{\phi}{\varphi}

\newcommand{\bbQ}{{\mathbb{Q}}}
\newcommand{\calL}{\mathcal{L}}

\newcommand{\frakS}{{\mathfrak{S}}}

\newcommand{\frakg}{{\mathfrak{g}}}
\newcommand{\fraksl}{{\mathfrak{sl}}}
\newcommand{\frakgl}{{\mathfrak{gl}}}

\newcommand{\vvirg}{ ,\ldots, }
\newcommand{\ttimes}{ \times \cdots \times }
\newcommand{\ootimes}{ \otimes \cdots \otimes }

\title[Computing the symmetry algebra]{Computing the continuous symmetries\\ of a parametrized variety}

\author[B. Biaggi]{Benjamin Biaggi}
\address{Benjamin Biaggi, Mathematical Institute, University of Bern, Alpeneggstrasse
22, 3012 Bern, Switzerland}
\email{benjamin.biaggi@unibe.ch}

\author[J. Draisma]{Jan Draisma}
\address{Jan Draisma, Mathematical Institute, University of Bern, Sidlerstrasse 5,
3012 Bern, Switzerland}
\email{jan.draisma@unibe.ch}

\author[F. Gesmundo]{Fulvio Gesmundo}
\address{Fulvio Gesmundo, Institut de Mathématiques de Toulouse; UMR5219 -- Université de Toulouse; CNRS -- UPS, F-31062 Toulouse Cedex 9, France}
\email{fgesmund@math.univ-toulouse.fr}

\author[A. Maraj]{Aida Maraj}
\address{Aida Maraj, Max Planck Institute of Molecular Cell Biology and Genetics, and  Center for Systems Biology Dresden}
\email{maraj@mpi-cbg.de}

\author[M. Mi\v{s}inov\'a]{Magdal\'ena Mi\v{s}inov\'a}
\address{Magdal\'ena Mi\v{s}inov\'a, Department of Mathematics and Statistics, University of Konstanz, Universitätsstrasse 10, 78464 Konstanz, Germany}
\email{magdalena.misinova@gmail.com}

\thanks{BB is funded by JD's Swiss National Science Foundation project
grant 200021-227864.}

\keywords{Symmetry Lie algebra, unirational variety, secant variety, linear preserver problem}
\subjclass{15A86, 17B45, 68W30, 14Q20}

\begin{document}

\begin{abstract}
We prove that the symmetry Lie algebra of a parametrized variety can be determined directly from the parametrization, without computing the vanishing ideal of the variety. We derive a practical polynomial-time Monte Carlo algorithm for computing the symmetry Lie algebra of a parametrized variety. {{We discuss applications to testing the binomiality of the ideal of a parametrized variety after changing coordinates, and test this property on varieties arising from staged tree models and colored Gaussian graphical models. Finally, we discuss symmetries and binomiality after changing coordinates for rational curves and give a characterization of the symmetries of many secant varieties. }}
\end{abstract}

\maketitle
\setcounter{tocdepth}{1}

\section{Introduction}
In applications of algebraic geometry, one is often given a polynomial map or, more generally, a rational map $\phi\colon \CC^m \dto \CC^n$, and wants to compute properties of the image closure
\[
 X \coloneq \overline{\{\phi(p) \mid p \in \CC^m \text{ and } \phi 
\text{ is defined at } p\}.} 
\]
The closure and other topological notions throughout the paper 
refer to the Zariski topology; in this setting, the closure coincides with that in the Euclidean topology. This framework yields a large class of
(irreducible) algebraic varieties, called \emph{unirational varieties}.

{{Let $\GL_n \coloneq \GL_n(\CC)$ and $\frakgl_n\coloneq \frakgl_{n}(\CC)$  be the general linear group and its Lie algebra, respectively.}} An important invariant of the embedded variety $X$ is the group
\[
 G_X  \coloneq \{g \in \GL_n \mid gX = X \} 
 \]
of linear symmetries of $X$, often called the \emph{symmetry group} or the
\emph{linear preserver subgroup} of $X$. Knowledge of
$G_X$ can be exploited in the computation of other properties of $X$. For
instance, the vanishing ideal of $X$ in the polynomial ring $\CC[y_1 ,\ldots, y_n]$
 is a $G_X$-module, and thus knowledge of
the representation theory of $G_X$ may help in determining (interesting subspaces of) that vanishing ideal.

By construction, $G_X$ is a Zariski closed subgroup of $\GL_n$, and hence
the union of finitely many connected components, which by standard
algebraic group theory are also irreducible. Let $G_X^0$ be the
connected component containing the identity element ${{\mathrm{Id}_n}} \in \GL_n$. This
is a closed normal subgroup of $G_X$, and $G_X/G_X^0$ is a finite group;
see, e.g., \cite[\S1.2]{Bor91}. The identity component $G_X^0$ is uniquely
determined by the Lie algebra $\frakg_X$ of $G_X$. We call $\frakg_X$
the \emph{symmetry Lie algebra} of $X$. 

The problem of determining the symmetry group of an algebraic variety appears in several contexts \cite{GUTERMAN200061,LiPierce}, in particular in the setting of varieties of matrices, tensors, or operators with special structure \cite{LiTsing,tan2003,johnston2011}. The symmetry Lie group of varieties of matrices of bounded rank was determined in \cite{GUTERMAN200061}, and \cite{Westwick} determined the  {symmetry group} of the variety of rank-one tensors. More recently, \cite{GHL} generalized this result to a wide range of secant varieties of tensors and other related varieties. In \cite{Maraj23}, the symmetry Lie algebra of an algebraic variety $X$ is computed from the polynomials vanishing on $X$, and in \cite{Kahle25,Maraj23}, $\frakg_X$ is used to test whether the vanishing ideal of $X$ is binomial after an invertible linear change of variables. 

The purpose of this paper is to show that the Lie algebra $\frakg_X$ of $G_X$ can be computed directly from the parametrization $\phi$ without first computing the defining ideal of $X$. We give practical polynomial-time Monte Carlo algorithms for computing $\frakg_X$ and detecting binomiality after an invertible linear change of variables. We show that this insight is useful to derive theoretical results as well: we apply it to varieties of staged tree models and colored Gaussian graphical models, to rational curves and to secant varieties.

The following paragraphs briefly highlight the main contributions of this paper.

\subsection*{Computing the symmetry Lie algebra}
{{We characterize the symmetry Lie algebra of the parametrized variety $X$
as the algebra of linear maps that send each point of $X$ into its tangent
space at that point (\Cref{thm: frakg via tangent spaces}), and consequently, into the span of the columns of the Jacobian of $\phi$ (\Cref{cor: symmetry via jacobian}). For dimensionality reasons, only a finite number of points of $X$ are needed for the computation. We implement this in the probabilistic \Cref{alg:Symmetry} and the deterministic \Cref{alg:Symmetry2}.}}

In all our algorithms we assume that $\phi$ is defined over the field
of rational numbers $\bbQ$; in this case, $\frakg_X$ is the $\CC$-linear
span of $\frakg_{X,\QQ} \coloneq \frakg_X \cap \frakgl_n(\QQ)$, and our algorithm
computes a basis for $\frakg_{X,\QQ}$. This restriction on the coefficients
is very mild: on the one hand, in many applications $\phi$ is indeed
defined over $\QQ$; and on the other hand, the algorithms can easily
be extended to computations in a fixed finite-degree extension of~$\QQ$.
 They can also be extended to transcendental extensions, but they may no longer run in polynomial time, as rational-function coefficients can become super-polynomial in size.

\begin{thm}[{{\Cref{alg:Symmetry} and \Cref{prop:Symmetry}}}] \label{thm:Main}
For any $\epsilon>0$ there exists a polynomial-time Monte Carlo algorithm that, on input a list of rational functions $\phi=(f_1 ,\ldots, f_n)\in \QQ(x_1 ,\ldots, x_m)^n$, outputs a basis for $\frakg_{X,\QQ}$ with probability $\geq 1-\epsilon$. Here $X \subseteq \CC^n$ is the image closure of $\phi$ regarded as a rational map $\CC^m \dto \CC^n$. With probability $\leq \epsilon$, the output is either ``fail'' or a basis of a subspace of $\frakgl_n(\QQ)$ different from $\frakg_{X,\QQ}$.
\end{thm}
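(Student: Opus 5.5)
The plan is to reduce $\frakg_X$ to a finite linear system over $\QQ$ using the tangent-space characterization announced above (\Cref{thm: frakg via tangent spaces} and \Cref{cor: symmetry via jacobian}). For $A\in\frakgl_n$ we have $A\in\frakg_X$ if and only if $A\phi(p)\in \im J_\phi(p)$ for all $p$ in a dense open subset of $\CC^m$, where $J_\phi(p)$ is the Jacobian of $\phi$ at $p$. Let $k=\dim X$. For generic $p$ the matrix $J_\phi(p)$ has rank $k$ and its column span is $T_{\phi(p)}X$. The condition $A\phi(p)\in\im J_\phi(p)$ is then linear in $A$. It can be written as the vanishing of all $(k+1)\times(k+1)$ minors of $[J_\phi(p)\mid A\phi(p)]$. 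Equivalently, choose a basis $N(p)$ of the left kernel of $J_\phi(p)$; the condition becomes $N(p)A\phi(p)=0$, which gives $n-k$ linear equations in the $n^2$ entries of $A$.

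The algorithm proceeds as follows.
\begin{enumerate}
\item Pick random rational points $p_1,\dots,p_r\in\QQ^m$ with coordinates drawn uniformly from a finite set $S\subset\ZZ$.
\item Evaluate $\phi(p_i)$ and $J_\phi(p_i)$ exactly. This is polynomial-time because the $f_j$ are given explicitly and their derivatives have polynomial size. If some denominator vanishes, resample, or output ``fail''.
\item Set $k$ to be the maximum of the ranks $\rk J_\phi(p_i)$.
\item Keep only the points where the rank equals $k$, and collect the linear equations $N(p_i)A\phi(p_i)=0$.
\item Output a $\QQ$-basis of the solution space, computed by Gaussian elimination.
\end{enumerate}
Every $A\in\frakg_{X,\QQ}$ satisfies all these equations, so the solution space $L_r$ always contains $\frakg_{X,\QQ}$. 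We must show that $L_r=\frakg_{X,\QQ}$ with high probability once $r$ is large enough. At most $n^2$ independent constraints are ever needed, so I would take $r$ of order $n^2$ times a small factor.

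The main obstacle is the probabilistic analysis, and the tool is the Schwartz--Zippel lemma.

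For the rank step, choose a $k\times k$ minor of $J_\phi$ that is not identically zero. Clearing denominators turns it into a polynomial whose degree is bounded by roughly $k\cdot(\deg+1)$ in terms of the input degrees. So each random point attains rank $k$, and avoids the poles, except with probability at most $D/|S|$.

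For the equations, I would argue inductively. Suppose $L_{i-1}\supsetneq\frakg_X$ (over $\CC$). Pick $A\in L_{i-1}\setminus\frakg_X$. By \Cref{cor: symmetry via jacobian}, some $(k+1)$-minor of $[J_\phi(x)\mid A\phi(x)]$ is a nonzero rational function of $x$, of bounded degree. Hence a random $p_i$ violates the constraint for this $A$, and cuts $L_i$ strictly down, with probability at least $1-D'/|S|$. This needs $L_{i-1}$ and the choice of $A$ to depend only on $p_1,\dots,p_{i-1}$, which they do.

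At most $n^2$ strict decreases are needed. A union bound over these steps and the rank checks gives failure probability at most $n^2(D+D')/|S|$. Choosing $|S|\ge n^2(D+D')/\epsilon$ makes this at most $\epsilon$, while the bit sizes stay polynomial in the input size and $\log(1/\epsilon)$.

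One can refine this by running for a fixed number of rounds and stopping once the dimension stabilizes, but the fixed-budget version suffices for the theorem. Failure modes are exactly those allowed in \Cref{thm:Main}: a ``fail'' output from hitting poles, or an output space strictly larger than $\frakg_{X,\QQ}$, or one computed with a wrong rank $k$. Finally, $\frakg_X$ is defined by linear equations with rational coefficients (the coefficients of the minors), so it is the $\CC$-span of its rational points. Hence computing over $\QQ$ loses nothing, and it suffices to output $L_r\cap\frakgl_n(\QQ)$.
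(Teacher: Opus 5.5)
Your proof is correct. Its skeleton matches the paper's: reduce to the linear conditions $A\phi(p)\in\im J_\phi(p)$ via \Cref{cor: symmetry via jacobian}, sample about $n^2$ integer points, solve the resulting system over $\QQ$, and bound the error with Schwartz--Zippel. The difference is in how the probability is organized.

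\textbf{The paper's argument.} It applies Schwartz--Zippel once, on the product space $(\CC^m)^{n^2}$. All bad tuples $(p_1,\dots,p_{n^2})$ lie in a single explicit hypersurface of total degree $\le 2n^3d(2m+1)$. This hypersurface is cut out by the product of three kinds of factors:
\begin{itemize}
\item the denominators $b(p_s)$;
\item a fixed nonzero $\dim X$-minor of $J_\phi(p_s)$ for each $s$;
\item one fixed nonzero minor of size $n^2-\ell$ of the full coefficient matrix of the system.
\end{itemize}

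\textbf{Your argument.} You run an adaptive argument. Conditional on $p_1,\dots,p_{i-1}$, a fixed $A\in L_{i-1}\setminus\frakg_X$ yields one nonzero $(k+1)$-minor of degree $O(ndm)$ in the new point. A union bound over the $n^2$ rounds then gives essentially the same $N$, of order $n^3dm/\epsilon$.

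\textbf{Comparison.} Your version is more elementary and shows directly why $n^2-\ell$ good rounds suffice. The paper's assertion that a nonzero $(n^2-\ell)$-minor exists in the joint variables rests implicitly on the same incremental reasoning. Your max-rank filtering is essentially the early-stopping variant of \Cref{re:Adapt}, and it gives one-sided error once $k$ is guessed correctly. The paper's version gives a non-adaptive, explicitly described Zariski-open set of good sample tuples. It needs no conditioning, which is the natural form for derandomization.

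\textbf{Points to tighten.}
\begin{itemize}
\item Your $k$ is the maximum rank over all samples, so the space $L_{i-1}$ your algorithm computes depends on future samples. Run the analysis with the true $\dim X$ in place of $k$, and note that the two coincide on the good event.
\item The containment $L_r\supseteq\frakg_{X,\QQ}$ at every kept point needs the full statement of \Cref{cor: symmetry via jacobian}, valid on all of $U_\phi$. The weaker ``dense open subset'' version you quote at the start is not enough.
\item You only treat the standard encoding. The paper also addresses the circuit encoding, taking $d=2^s$ and computing derivatives via Baur--Strassen.
\end{itemize}
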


The input $\phi$ can be given in either of two ways: 
\begin{enumerate}
\item the \emph{standard encoding}, where each $f_i$ is given as an explicit rational function $a_i/b_i$, with $a_i$ and $b_i$ polynomials in $\QQ[x_1 ,\ldots, x_m]$ represented as a list of pairs $(c_\alpha,\alpha)$ with $\alpha \in \ZZ_{\geq 0}^m$ representing the exponent vector of the monomial $x^\alpha$ and $c_\alpha \in \QQ$ recording its coefficient;

\item the \emph{circuit encoding}, where each $f_i$ is given as an algebraic circuit in the sense of \cite{SY10}: nodes with in-degree $0$ are labelled by a constant or a variable $x_i$, nodes with in-degree $1$ compute the inversion $a \mapsto a^{-1}$, and nodes with in-degree $2$ compute addition or multiplication. We assume that the inversion gates never receive as input a function that is identically $0$. Such an algebraic circuit would not define any rational function. 
 \end{enumerate}
The
running time  of \Cref{alg:Symmetry} is polynomial in the bit length of the input
$\phi$, given in either of the two encodings, and in
$\log(1/\epsilon)$ (\Cref{prop:running time for Algorithm 1}).

The result extends to more powerful models of computation. In the
BSS model \cite{BSS89} extended with, say, sampling random real numbers from
the uniform distribution on $[0,1]$, one may allow $\phi$ to be defined
over $\RR$ or even $\CC$. In this case, \Cref{alg:Symmetry} runs in
polynomial time in the size of the input and returns a basis for $\frakg_{X}$ with
probability $1$.  

{{For a deterministic approach, we give \Cref{alg:Symmetry2}, obtained by derandomizing \Cref{alg:Symmetry}. In this case, we {{lose}} polynomiality of the running time as rational function expressions can potentially become large.}}

\subsection*{Testing $\GL$-binomiality}

In \cite{Maraj23,Kahle25}, the problem of determining the symmetry Lie algebra is studied within the context of toric varieties, particularly to establish necessary conditions for a variety $X \subseteq \mathbb{C}^n $ to have a \emph{$\GL$-binomial ideal}---that is,  for there to exist matrix $g\in \GL_n$ such that~$gX$ is the vanishing set of an ideal generated by polynomials with at most two terms. {{The condition $\dim \frakg_X\geq \dim X$ is presented as a necessary one in \cite{Maraj23}.}} This result is generalized in \cite{Kahle25}, which presents an algorithm for deciding whether $X$ has a binomial ideal after a coordinate change. {This algorithm requires access to the defining ideal $I(X)$: first it computes the symmetry Lie algebra $\mathfrak{g}_X$, then identifies the Lie algebra of a maximal torus $ T $ within $ G_X $, and finally verifies whether $ T $ has a dense orbit in $ X $. We discuss a Monte Carlo variant of this approach which does not require access to the defining ideal~$I(X)$ and it is applicable directly on a parametrization of $X$.}

\begin{cor} [{{\Cref{alg:Binom} and \Cref{prop: alg2 probability}}}] \label{cor:Binomial}
For any $\epsilon > 0$ there exists a polynomial-time Monte Carlo algorithm
that on the same input as in \Cref{thm:Main} decides whether $X$ has a $\GL$-binomial ideal, and whose output is correct with probability $\geq 1-\epsilon$.
\end{cor}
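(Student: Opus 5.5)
The plan is to follow the three-step strategy of \cite{Kahle25}, replacing each use of the ideal $I(X)$ by a computation with the parametrization $\phi$, and to control the failure probability with a union bound. The structural fact behind it is that $X$ has a $\GL$-binomial ideal if and only if some torus $T\subseteq G_X$ has a dense orbit in $X$. Indeed, if $gX$ is cut out by binomials, then $gX$ is irreducible, so its ideal is a prime binomial ideal; after removing coordinate hyperplane factors, $gX$ is the closure of an orbit of a diagonal torus, and conjugating back gives $T$. Conversely, a dense orbit of $T$ can be diagonalized by some $g$, and then $gX$ is a toric variety up to coordinate scaling and has a binomial prime ideal. Since all maximal tori of $G_X^0$ are conjugate by elements of $G_X^0$, which preserve $X$ and so preserve the property of having a dense orbit, it suffices to test a single maximal torus $T$ of $G_X^0$.

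The algorithm has three steps.
\begin{enumerate}
\item Run \Cref{alg:Symmetry} with error $\epsilon/3$ to obtain a basis of $\frakg_{X,\QQ}$.
\item Compute a Cartan subalgebra of $\frakg_{X,\QQ}$ and extract from it the Lie algebra $\liea{t}$ of a maximal torus. Concretely, take the semisimple parts of the Jordan decompositions of its elements, which span a commutative subalgebra of semisimple matrices. There are polynomial-time algorithms over $\QQ$ for this, for instance in the style of de Graaf: take the centralizer of the semisimple part of a random element and iterate. Randomness enters here too, and we allot it error $\epsilon/3$.
\item Decide density. The orbit $T\cdot p$ of a general point $p=\phi(q)\in X$ is dense in $X$ if and only if $\dim \liea{t}\,p=\dim X$. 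Here $\dim \liea{t}\,p$ is the rank of the matrix with columns $Ap$ for $A$ in a basis of $\liea{t}$, and $\dim X$ is the generic rank of the Jacobian of $\phi$.
\end{enumerate}

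For step 3, both ranks are generic ranks of matrices whose entries are rational functions in $q$. Evaluating at a random $q$ drawn from a finite set $S^m$ gives the correct ranks except with probability at most $D/|S|$, by Schwartz--Zippel. Here $D$ bounds the degree of a nonvanishing minor, which is polynomially bounded in the standard encoding and exponentially bounded, with polynomial bit-size, in the circuit encoding. One caveat: $\liea{t}$ depends only on step 2, so there is no circularity. Evaluating at $q$ must avoid poles of $\phi$, and this is absorbed into the same degree bound by multiplying by the denominators. For circuits, the evaluations are done modulo a random prime so that intermediate numbers do not blow up, in the same way as in \Cref{prop:running time for Algorithm 1}. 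Choosing $|S|\geq 3D/\epsilon$ makes every step fail with probability at most $\epsilon/3$, and $\log|S|$ is polynomial in the input size and $\log(1/\epsilon)$.

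The main obstacle is step 2: obtaining a maximal torus over $\QQ$ in polynomial time, while keeping the bit sizes of the semisimple parts controlled, since their eigenvalues may be irrational. I would handle this by never diagonalizing. The semisimple part of $A\in\frakgl_n(\QQ)$ is a polynomial in $A$ with rational coefficients, computable from the squarefree part of the minimal polynomial by Newton iteration, so it stays in $\frakgl_n(\QQ)$ with polynomial size. Since $\frakg_{X,\QQ}$ is algebraic, it contains these semisimple parts, and rank computations over $\QQ$ suffice for step 3.
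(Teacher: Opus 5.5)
Your proposal is correct and follows essentially the same route as the paper. The paper likewise runs \Cref{alg:Symmetry}, takes the semisimple parts of a basis of the centralizer $\{D\in\frakg_X \mid [A_s,D]=0\}$ of a random $A$ to obtain the Lie algebra of a maximal torus, and compares $\rk J_\phi(p)$ with $\dim\langle Dq\rangle$ at a random point using Schwartz--Zippel bounds.

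The one refinement is that step 2 needs no iteration. By \Cref{lm:Cartan}, a single random $A$ suffices, because it is regular with high probability. Regularity holds whenever the coefficient of $\lambda^s$ in $\det(\lambda\,\id-\ad(A))$ is nonzero, and this coefficient is a nonzero polynomial of degree at most $n^2$ in the random coefficients.
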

{{We give a deterministic variant of this algorithm in \Cref{alg:Binom2}.}}

\subsection*{Applications to staged tree models} 

Staged tree models are discrete models that represent dependency relations among events and are realized by a staged tree, that is, a rooted directed tree with colored nodes and labeled edges. Algebraically, the model is the image closure of the polynomial map in \eqref{eq:parametrization_for-staged_trees} given by the product of edge labels along root-to-leaf paths. 

Binary one-stage tree models  are all $\GL$-binomial by \cite[Theorem~6.5]{gorgen2022staged}. However, not all binary staged trees have $\GL$-binomial ideal by \cite[Example~25]{Maraj23} of a binary tree model with $4$ stages whose ideal is not $\GL$-binomial, and  not all ternary one-stage trees have $\GL$-binomial ideal by \cite[Example~26]{Maraj23} which shows that the ternary one-stage tree $\mathcal{C}_{3,4}$ of depth $4$ (the first tree in \Cref{table:caterpillar2}) does not have $\GL$-binomial ideal.

We prove (\Cref{ex:binary two-stage trees}) that $\GL$-binomiality breaks as soon as two stages are involved: we test this property for all  $127$ binary staged tree models with $2$ stages of depth at most $3$, and find that the $4$ models in \Cref{table:binary_two_staged} are not $\GL$-binomial.  
We also tested $\GL$-binomiality for the~$33$ extensions of the stage tree $\mathcal{C}_{3,4}$ by at most two internal nodes (\Cref{ex:caterpillar}): exactly~$16$ of them, listed in \Cref{table:caterpillar2}, are not $\GL$-binomial.

\subsection*{Applications to colored Gaussian graphical models} 
{{
These are generalizations of Gaussian graphical models, with symmetries in the entries of the concentration (precision) matrix, encoded in colorings of the graph. Its reciprocal variety is the image closure of the parametrization in \eqref{eq:colored graph} given by maximal minors of these symmetric matrices. 
We say that a model is $\GL$-binomial when its reciprocal variety has a $\GL$-binomial ideal.

Experimental results of \cite[Section~6]{Kahle25}  on  graphical Gaussian models, imply that Gaussian graphical models are $\GL$-binomial if and only if the graph is a block graph if and only if it is binomial in the original coordinates.
Other works \cite{biaggi2025binomiality,coons2023,cardwell2024toric}
provide classes of colored Gaussian graphical models on block graphs that are binomial in the original coordinates. In particular, in \cite{biaggi2025binomiality} is proven that a colored Gaussian graphical model is binomial in the original coordinates if and only if the graph is  vertex regular, edge regular, and vertex triangle regular.

We show that colors in a graph can make or break $\GL$-binomiality by testing all colorings of the (block) complete graph $K_4$  and of the (non-block) cycle $C_4$ on $4$ nodes.  More precisely, we find  there are $8$ colorings of $C_4$ (\Cref{ex:C4}), recorded in \Cref{fig:C4}, that give $\GL$-binomial models, and  that only $40$ out of the $215$ colorings of $K_4$  give $\GL$-binomial models (\Cref{ex:K4}). Moreover, among these $40$ colorings of $K_4$ there are $6$ colored graphs that are non-vertex regular, non-edge regular and not triangle regular (\Cref{fig:K4}), one  serving as a counterexample to \cite[Conjecture~22]{cardwell2024toric}, which conjectures that $\GL$-binomial BMT-derived models derived from phylogenetic trees with colored nodes must be vertex regular (\Cref{ex: conjecture}). 

 Lastly, we prove that in the extreme case when all vertices have one color and all edges have one color, the model is always $\GL$-binomial, with reciprocal variety realized by a rational normal curve (\Cref{thm: two colors}). 
}}

\subsection*{Symmetry Lie algebras of curves}

{{We treat the case of parametrized curves, that is, when~$\phi = (f_1 ,\ldots, f_n)$ is an $n$-tuple of univariate rational functions. In this case, Luroth's Theorem guarantees that the curve is \emph{rational}: in particular, up to reparameterization, the generic fiber of $\phi$ is a single point. 

We completely classify the possible symmetry Lie algebras of  $X$ in 
 \Cref{thm: Curves with nonzero symmetry}. More precisely, we show that $\frakg_X$ is either trivial, the span of a diagonalizable matrix, or  a $2$-dimensional parabolic subalgebra of $\mathfrak{sl}_2$. In the last case, $X$ is an affine rational normal curve. In particular, whenever $\frakg_X \neq \{0\}$, $X$ has a $\GL$-binomial ideal, and up to a linear change of coordinates, the components of the parametrization of $X$ are powers of a fixed rational~function.
}}

\subsection*{Symmetry Lie algebras of secant varieties}

Let $X \subseteq \CC^n$ be an irreducible variety that is closed
under scalar multiplication, that is, {{$\lambda q \in X$ for  $q \in X$ and $\lambda \in \CC$. The $r$-th \emph{secant variety} of $X$ is
$\sigma_r(X) \coloneq \overline{\{q_1+\cdots+q_r \mid q_i \in X\}} \subseteq \CC^n.$}}
Note that $\dim(\sigma_r(X)) \leq \min\{n,r \dim X \}$. The symmetry Lie group of $X$ is contained in that of $\sigma_r(X)$ and this containment may be strict: for instance, if $X$ is not a linear space and $r$ is large enough that $\sigma_r(X)$ is the linear span of $X$.

{{We show (\Cref{thm: secant varieties lie algebra}) that}} when $\sigma_{r+1}(X)$ has the expected dimension $(r+1)\dim X $, the symmetry Lie algebras of $X$ and $\sigma_r(X)$ coincide, and consequently so do the identity components of their symmetry Lie groups. {{The proof of \Cref{thm: secant varieties lie algebra} does not require $X$ to be unirational. It relies on the natural ``parametrization'' from  $X^r$ to the $r$-th secant variety~$\sigma_r(X)$ of $X$, sending an $r$-tuple of points to their sum. 
 }}

\subsection*{Organization of this paper}
 The paper is organized as follows.  In \Cref{sec:GeomChar} we prove a geometric characterization of $\frakg_X$ via the tangent spaces at points of 
$X$, which underlies \Cref{alg:Symmetry} and many other results in this paper. 
In \Cref{sec:CompSym} we prove \Cref{thm:Main},
and in \Cref{sec:Binomial} we prove \Cref{cor:Binomial}. In
\Cref{sec:DeRandom} we derandomize \Cref{alg:Symmetry}
and \Cref{alg:Binom}.
 In \Cref{sec:computational experiments} we report on computational experiments with staged tree models and colored Gaussian graphical models, and prove \Cref{thm: two colors}. In \Cref{sec:Curves} we characterize rational curves $X$ that satisfy  $\frakg_X \neq \{0\}$, showing that this occurs precisely when, up to a linear change of coordinates, the components of the parametrization are powers of a fixed rational function.  Finally, in \Cref{sec:Secants} we prove
\Cref{thm: secant varieties lie algebra} and apply it to secant varieties of Segre-Veronese embeddings.

{{We implemented \Cref{alg:Symmetry} and \Cref{alg:Binom} in SageMath.
The implementation,  together with the computational
experiments from \Cref{sec:computational experiments}, is available at
\begin{center}
    \url{https://github.com/B-Biaggi/symmetryLieAlgebra}.
\end{center}}}

\section{A geometric characterization of the symmetry Lie algebra} \label{sec:GeomChar} 

We provide a characterization of the symmetry Lie algebra of a variety $X$
as the algebra of linear maps that map each point of $X$ into its tangent
space to $X$.  Geometrically, this means that elements of the Lie algebra
define vector fields on $X$, and the corresponding complex-analytic
one-parameter subgroups will define the integral curves of such vector
fields, hence mapping $X$ to itself.

In order to stay within the purely algebraic framework, rather than
working with integral curves, we prove the result using a different
characterization of the symmetry group and hence of its Lie algebra. We
recall the following result  which is a combination of \cite[Lemma
3.1]{GHL}  and \cite[Lemma 7.4]{Bor91}; see also the discussion of
\cite[\S3.1]{GHL}.

\begin{prop}\label{prop: lie algebra via ideal}
Let $X \subseteq \CC^n$ be an algebraic variety and let $\frakg_X$ be its symmetry Lie algebra. Let
$d \geq 0$ be an integer such that $I(X)_{\leq d}$ generates the vanishing ideal $I(X)
\subseteq \CC[y_1 ,\ldots, y_n]$ of $X$. Then
\begin{align*}
\frakg_X &= \{ A \in \frakgl_n \mid A.f \in I(X)_{\leq d} \text{ for all } f \in I(X)_{\leq d} \}
\\ &= \{ A \in \frakgl_n \mid A.f \in I(X) \text{ for all } f \in I(X) \}.
\end{align*}
\end{prop}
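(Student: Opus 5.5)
The plan is to realize $G_X$ as the stabilizer of a finite-dimensional subspace in a rational $\GL_n$-representation, and then apply the standard fact \cite[Lemma 7.4]{Bor91}. That fact says the Lie algebra of the stabilizer of a subspace $W$ in a rational $G$-module $V$ equals the stabilizer of $W$ in $\frakgl_n$ under the differentiated action. We let $\GL_n$ act on $\CC[y_1,\ldots,y_n]$ by $(g\cdot f)(y)=f(g^{-1}y)$. The differentiated action of $A\in\frakgl_n$ is then the derivation $A.f=-\sum_{i,j}A_{ij}y_j\,\partial f/\partial y_i$; the sign convention is irrelevant for stabilizers. This action preserves the finite-dimensional space $V\coloneq\CC[y]_{\leq d}$, and $\GL_n\to\GL(V)$ is a morphism of algebraic groups.

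\textbf{Step 1.} We show $G_X=\operatorname{Stab}_{\GL_n}(I(X)_{\leq d})$. If $gX=X$, then $g\cdot I(X)=I(gX)=I(X)$, and $g$ preserves degrees, so $g$ stabilizes $I(X)_{\le d}$. Conversely, suppose $g$ stabilizes $I(X)_{\leq d}$. Since this space generates $I(X)$, we get $g\cdot I(X)\subseteq I(X)$, hence $I(gX)\subseteq I(X)$ and so $X\subseteq gX$. Both are varieties of the same dimension with the same number of irreducible components of each dimension, being isomorphic via $g$. Applying the same inclusion componentwise, or alternatively using that $g$ induces an injective, hence bijective, linear endomorphism of the finite-dimensional space $I(X)_{\le d}$, we get $g^{-1}$ stabilizing it too, and so $gX=X$. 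The bijectivity argument is the cleanest, and it is the one I will use.

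\textbf{Step 2.} Apply \cite[Lemma 7.4]{Bor91} (the Lie algebra of the stabilizer of a subspace is the subspace-stabilizer in the Lie algebra), as packaged in \cite[Lemma 3.1]{GHL}. This gives the first equality: $\frakg_X=\{A: A.I(X)_{\le d}\subseteq I(X)_{\le d}\}$. One subtlety is that Borel's lemma is stated in characteristic zero for the scheme-theoretic versus reduced stabilizer. Over $\CC$ these agree by Cartier's theorem, so no issue arises.

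\textbf{Step 3.} For the second equality, note that $A$ acts as a derivation of degree $0$, since it maps each $y_j$ to a linear form. If $A$ preserves $I(X)_{\le d}$, then for $f=\sum h_k f_k$ with $f_k\in I(X)_{\leq d}$ we have $A.f=\sum (A.h_k)f_k+\sum h_k(A.f_k)\in I(X)$. Conversely, if $A.I(X)\subseteq I(X)$, then since $A$ does not raise degree, it maps $I(X)_{\le d}$ into $I(X)\cap\CC[y]_{\le d}=I(X)_{\le d}$.

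The main obstacle is Step 2, namely correctly invoking the algebraic-group fact that the Lie algebra of a stabilizer is the infinitesimal stabilizer. Everything else is formal. I will cite it rather than reprove it.
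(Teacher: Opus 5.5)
Your proposal is correct and takes the same route as the paper. The paper gives no argument beyond citing \cite[Lemma 3.1]{GHL} (realizing $G_X$ as the stabilizer of $I(X)_{\leq d}$) together with \cite[Lemma 7.4]{Bor91} (in characteristic zero, the Lie algebra of a subspace stabilizer is the infinitesimal stabilizer), and your Steps 1 and 3 correctly fill in the routine details that the citation leaves implicit.
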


Here $A.f$ stands for the usual action of $\frakgl_n$ on the polynomial ring 
$\CC[y_1, \ldots, y_n]$ via derivations; explicitly, $(A.f)(q)$ is the coefficient 
of $\epsilon$ in $f((Id_n - \epsilon A).q)$; see also \cite[Proposition~4.2]{Maraj23}. 
Using this characterization at the level of ideals, we obtain the following 
infinitesimal characterization in terms of tangent spaces.

{{ \begin{thm}\label{thm: frakg via tangent spaces}
Let $X \subseteq \CC^n$ be an algebraic variety, let $S \subseteq X$ be a 
Zariski-dense subset, and let $T_q X$ denote the tangent space of $X$ at $q$. 
Then $
\frakg_X = \{ A \in \frakgl_n \mid A.q \in T_q X \text{ for all } q \in S \}.$
\end{thm}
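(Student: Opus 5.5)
We prove both inclusions using \Cref{prop: lie algebra via ideal}. Throughout, $T_qX$ is the Zariski tangent space $\{v \in \CC^n \mid (df)_q(v) = 0 \text{ for all } f \in I(X)\}$. For $A \in \frakgl_n$ the derivation action gives $(A.f)(q) = -(df)_q(Aq)$, which is the derivative in $\epsilon$ of $f(q - \epsilon Aq)$. The whole argument rests on this identity. We may assume $X$ is irreducible, as it is throughout the paper, and we note that this hypothesis is used in the second inclusion.

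\emph{The inclusion $\subseteq$.} Let $A \in \frakg_X$. By \Cref{prop: lie algebra via ideal}, $A.f \in I(X)$ for every $f \in I(X)$. Hence for every $q \in X$ (in particular every $q \in S$) we get $(df)_q(Aq) = -(A.f)(q) = 0$ for all $f \in I(X)$, so $Aq \in T_qX$.

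\emph{The inclusion $\supseteq$.} Suppose $Aq \in T_qX$ for all $q \in S$. We must show $A.f \in I(X)$ for all $f \in I(X)$; \Cref{prop: lie algebra via ideal} then gives $A \in \frakg_X$. Fix $f \in I(X)$. For each $q \in S$ we have $(A.f)(q) = -(df)_q(Aq) = 0$, because $Aq \in T_qX$ and $f \in I(X)$. So the polynomial $A.f$ vanishes on $S$. Since $S$ is Zariski-dense in $X$ and $V(A.f)$ is closed, $A.f$ vanishes on all of $X$, i.e.\ $A.f \in I(X)$.

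\emph{Main obstacle.} The only delicate point is the choice of definition of $T_qX$ at singular points of $S$. With the Zariski tangent space above, the argument goes through without restriction. If one uses a different notion, such as the tangent space to the smooth locus, one first replaces $S$ by $S \cap X_{\mathrm{reg}}$. This set is still dense because $X_{\mathrm{reg}}$ is open and dense in the irreducible $X$. For a reducible $X$ the Zariski-tangent-space argument still works verbatim, since density of $S$ in $X$ is all that is used. One should also check the sign convention in $A.f$, but it is irrelevant here, since only the vanishing of $(df)_q(Aq)$ matters.
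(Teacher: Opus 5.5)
Your proof is correct and matches the paper's argument: both inclusions come from \Cref{prop: lie algebra via ideal} together with the first-order identity $(A.f)(q) = -(df)_q(Aq)$, which the paper writes as $(A.f)(q) \equiv f(q-\epsilon A.q) \bmod \epsilon^2$, and the reverse inclusion passes from vanishing on $S$ to vanishing on $X$ by density. One small inconsistency: your opening remark that irreducibility is used in the second inclusion contradicts your own correct closing observation that, with the Zariski tangent space, irreducibility is never needed.
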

\begin{proof}
If $A \in \frakg_X$, then $A.f \in I(X)$ for all $f \in I(X)$ by 
\Cref{prop: lie algebra via ideal}. Therefore 
$
0=(A.f)(q)=f(q-\epsilon A.q) \mod (\epsilon^2).
$
This means precisely $A.q \in T_qX$.
Conversely, if $A.q \in T_qX$ for all $q \in S$, then $(A.f)(q) = 0$ for 
all $f \in I(X)$ and $q \in S$. Since $S$ is Zariski dense in $X$, $A.f$ 
vanishes on $X$, so $A.f \in I(X)$, and therefore $A \in \frakg_X$ by 
\Cref{prop: lie algebra via ideal}.
\end{proof}

We use \Cref{thm: frakg via tangent spaces} to give a more practical 
characterization of $\frakg_X$ via the Jacobian of $\phi$. For  rational map $\phi=(f_1 ,\ldots, f_n )\colon  \CC^m \dto \CC^n$, the  \emph{Jacobian map} of $\phi$ is 
\begin{align*}
J_\phi\colon \CC^m &\dto \CC^{n \times m} \\
		 p &\mapsto \left( \frac{\partial f_i}{\partial x_j}(p) \right)_{i,j}.
\end{align*} 
Define $U_\phi\coloneq \{p \in \CC^m \mid \rk J_\phi(p) = \dim X\}$.

\begin{cor}\label{cor: symmetry via jacobian}
Let $\phi = (f_1,\ldots,f_n)\colon \CC^m \dto \CC^n$ be a rational map 
and $X \coloneq \overline{\im(\phi)}$. Then~$U_\phi$ contains a Zariski open dense subset of $\CC^m$, and
\begin{align}\label{eq:Uphi}
\frakg_X = \{ A \in \frakgl_n \mid A.\phi(p) \in \im J_\phi(p) 
             \text{ for all } p \in U_\phi \}.
\end{align}
\end{cor}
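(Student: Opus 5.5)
The plan is to reduce to \Cref{thm: frakg via tangent spaces} with $S \coloneq \phi(U')$ for a suitable open dense $U' \subseteq U_\phi$. We then show separately that the condition at points of $U_\phi \setminus U'$ is automatically satisfied by elements of $\frakg_X$.

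First, $U_\phi$ contains a nonempty Zariski open subset. Let $D \subseteq \CC^m$ be the open dense domain of definition of $\phi$. On $D$, the rank of $J_\phi(p)$ is lower semicontinuous, so the locus where it attains its maximum $r$ is open and dense. Since $\phi(D)$ is dense in $X$, we have $r = \dim X$: by generic smoothness in characteristic $0$ (the differential of a dominant morphism $D \to X$ is surjective onto $T_{\phi(p)}X$ for $p$ in a dense open subset), or equivalently by the fact that the rank of the Jacobian equals the transcendence degree of $\CC(f_1,\ldots,f_n)$ over $\CC$, which is $\dim X$. Moreover, $\rk J_\phi(p) \le \dim X$ everywhere on $D$. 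Indeed, the chain rule gives $\im J_\phi(p) \subseteq T_{\phi(p)}X$ for every $p \in D$: for $f \in I(X)$ we have $f\circ\phi = 0$, so differentiating yields $df_{\phi(p)} \circ J_\phi(p) = 0$. If $\phi(p)$ is a smooth point, then $\dim T_{\phi(p)}X = \dim X$. At a singular image point the bound instead follows from the transcendence-degree argument, since every minor of size $\dim X + 1$ of $J_\phi$ vanishes identically as a rational function. In particular, $U_\phi$ is exactly the (open dense) locus where the rank is maximal.

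Next, for $p \in U_\phi$ we compare $\im J_\phi(p)$ with $T_{\phi(p)}X$. We know $\im J_\phi(p) \subseteq T_{\phi(p)}X$ and $\dim \im J_\phi(p) = \dim X$. Let $U' \subseteq U_\phi$ be the preimage under $\phi$ of the smooth locus $X_{\mathrm{sm}}$. It is open, and it is nonempty because $\phi$ is dominant and $X_{\mathrm{sm}}$ is open dense. For $p \in U'$ we get $\dim T_{\phi(p)}X = \dim X$, hence $\im J_\phi(p) = T_{\phi(p)}X$. Since $\phi(U')$ is Zariski dense in $X$, \Cref{thm: frakg via tangent spaces} gives $\frakg_X = \{A \mid A.\phi(p) \in \im J_\phi(p) \ \forall p \in U'\}$.

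Finally, we upgrade from $U'$ to all of $U_\phi$. The inclusion ``$\supseteq$'' is immediate, since the condition over $U_\phi$ implies the condition over $U'$. For ``$\subseteq$'', fix $A \in \frakg_X$. The condition $A.\phi(p) \in \im J_\phi(p)$ says that the $n \times (m+1)$ matrix $[J_\phi(p) \mid A.\phi(p)]$ has rank $\le \dim X$, i.e.\ all its minors of size $\dim X + 1$ vanish at $p$. These minors are rational functions regular on $D$, and they vanish on the dense set $U'$, so they vanish on all of $D \supseteq U_\phi$. Since $J_\phi(p)$ already has rank exactly $\dim X$ for $p \in U_\phi$, this forces $A.\phi(p) \in \im J_\phi(p)$. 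The main obstacle is the rank bound at singular image points; the cleanest route is to work throughout with the vanishing of minors as rational functions, which avoids any smoothness at individual points.
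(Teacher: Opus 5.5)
Your proof is correct and follows the same route as the paper: the chain-rule inclusion $\im J_\phi(p) \subseteq T_{\phi(p)}X$, equality over the preimage of the smooth locus, \Cref{thm: frakg via tangent spaces} applied to that dense set, and a density argument to pass to all of $U_\phi$. Your version is in fact more careful on two points the paper glosses over. First, you prove $U_\phi \neq \emptyset$ via generic smoothness (or the Jacobian criterion), whereas the paper relies on the claim that a smooth image point forces $p \in U_\phi$, which fails, e.g., for $t \mapsto t^2$ at $t=0$. Second, you make explicit, via the $(\dim X+1)$-minors of $[J_\phi(p) \mid A.\phi(p)]$, why the condition extends from $U'$ to all of $U_\phi$.
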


\begin{proof}
First note that $U_\phi$ is Zariski open and non-empty. Its complement is the union of the set of points where $\phi$ is not defined, which is given by the vanishing of the denominators appearing in $\phi$, and the set of points where $\rk J_\phi(p) < \dim X$, given by the vanishing of minors of $J_\phi$. Moreover, if $\phi$ is defined at $p$ and $\phi(p)$ is a smooth point of $X$, then $p \in U_\phi$. This shows that $U_\phi$ is open and non-empty.

For any $p \in U_\phi$, we have $\im J_\phi(p) \subseteq T_{\phi(p)}X$ with equality if $\phi(p)$ is smooth. By \Cref{thm: frakg via tangent spaces}, we obtain the inclusion $\supseteq$ in \eqref{eq:Uphi}. On the other hand, the right-hand side of \eqref{eq:Uphi} does not change if one restricts the subset $U_\phi' \subseteq U_\phi \cap \{ p \in \CC^m \mid \phi(p) \text{ is smooth}\}$; on this subset $\im J_\phi(p) = T_{\phi(p)}X$ showing that the inclusion $\subseteq$ holds as well.
\end{proof}
}}

For each $p \in U_\phi$, the condition that $A.\phi(p)$ lies in 
the column space of $J_\phi(p)$ defines a system of linear equations 
for the entries of $A$. \Cref{cor: symmetry via jacobian} shows that, 
as $p$ varies over any dense subset $S$ of $U_\phi$, these linear 
equations cut out $\frakg_X$. Since any nonempty Zariski open subset 
of $\CC^m$ contains rational points, we may take $S = U_\phi \cap \QQ^m$.   In particular, if the coefficients of the $f_i$ all lie in $\QQ$, then 
the linear system for $\frakg_X$ is defined over $\QQ$. Moreover, since 
$\frakg_X$ is a finite-dimensional vector space, the system stabilizes 
after finitely many points: there exist $p_1,\ldots,p_M \in U_\phi \cap \QQ^m$ 
such that the equations at $p_1,\ldots,p_M$ alone already cut out~$\frakg_X$. This observation is used to design \Cref{alg:Symmetry} in the next section.

The characterization of $\frakg_X$ in \Cref{thm: frakg via tangent spaces} applies to any variety $X$. Special geometric features of $X$ can however simplify the computation considerably. We record two such cases: when~$X$ 
is \emph{linearly degenerate}, that is, contained in a proper linear 
subspace of $\CC^n$, and when~$X$ is a \emph{cylinder}, that is, 
$X = \pi^{-1}\pi(X)$ where $\pi\colon \CC^n \to \CC^n/L$ is the 
quotient by some linear subspace $L$.
{{\begin{prop}\label{prop: linearly degenerate}
Let $X \subseteq \CC^n$ be an algebraic variety contained in a
linear subspace $U \subseteq \CC^n$. Let $K_U \subseteq \frakgl_n$ be the
subspace of linear maps $A\colon \CC^n \to \CC^n$ with $A|_U = 0$.~Then~$K_U  \subseteq \frakg_X$. Moreover, if $U$ is spanned by $X$, then 
$\frakg_X$ is the preimage in $\frakgl_n$ of the symmetry Lie algebra~$\frakg_{X,U}$ of $X$ in $\frakgl(U)$, under the natural map $\frakgl_n \to \frakgl_n/K_U \cong \Hom(U,\CC^n)$.
\end{prop}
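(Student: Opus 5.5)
The plan is to use the tangent-space characterization of \Cref{thm: frakg via tangent spaces}, taking $S = X$ throughout, together with the elementary fact that $T_qX \subseteq U$ for every $q \in X$ whenever $X \subseteq U$.

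For the fact, choose linear forms $\ell_1,\ldots,\ell_k$ whose common zero set is $U$. These forms lie in $I(X)$, and the differential of a linear form is the form itself. Hence every tangent vector $v \in T_qX$ satisfies $\ell_i(v)=0$ for all $i$, that is, $v \in U$.

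\textbf{Inclusion $K_U \subseteq \frakg_X$.} Let $A \in K_U$. Every $q \in X$ lies in $U$, so $A.q = 0$. Since $0 \in T_qX$, \Cref{thm: frakg via tangent spaces} gives $A \in \frakg_X$.

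\textbf{Preimage statement.} Now assume $U$ is spanned by $X$. The main point is to identify the condition of \Cref{thm: frakg via tangent spaces} in $\frakgl_n$ with the same condition in $\frakgl(U)$.

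First I check that every $A \in \frakg_X$ maps $U$ into $U$. For $q \in X$ we have $A.q \in T_qX \subseteq U$. Since $X$ spans $U$ and $A$ is linear, it follows that $A(U) \subseteq U$. Restriction therefore gives a map $\frakg_X \to \frakgl(U)$, $A \mapsto A|_U$.

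Next I compare tangent spaces. The tangent space $T_qX$ computed inside $U$, using the ideal of $X$ in $\CC[U]$, coincides with the one computed in $\CC^n$. This holds because $I(X) \subseteq \CC[y_1,\ldots,y_n]$ is generated by the linear forms $\ell_i$ together with lifts of generators of the ideal of $X$ in $U$. Consequently, for any $A \in \frakgl_n$ with $A(U) \subseteq U$, the condition ``$A.q \in T_qX$ for all $q \in X$'' depends only on $A|_U$. By \Cref{thm: frakg via tangent spaces} applied both in $\CC^n$ and in $U$, this gives
\[
A \in \frakg_X \iff A|_U \in \frakg_{X,U}.
\]

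It remains to phrase this via the quotient. The map $\frakgl_n \to \Hom(U,\CC^n)$, $A \mapsto A|_U$, is surjective with kernel $K_U$, which identifies $\frakgl_n/K_U$ with $\Hom(U,\CC^n)$. Regard $\frakgl(U) \subseteq \Hom(U,\CC^n)$ via the inclusion $U \subseteq \CC^n$. Then $\frakg_X$ is exactly the preimage of $\frakg_{X,U}$, since we showed that every $A \in \frakg_X$ restricts into $\frakgl(U)$.

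\textbf{Main obstacle.} The only delicate step is the compatibility of tangent spaces and ideals between $U$ and $\CC^n$. It becomes straightforward once coordinates are chosen so that $U = \{y_{k+1}=\cdots=y_n=0\}$: then $I(X) = I_U(X)\,\CC[y] + (y_{k+1},\ldots,y_n)$.
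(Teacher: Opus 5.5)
Your proof is correct and takes the same route as the paper's. Both apply \Cref{thm: frakg via tangent spaces} with $A.q = 0$ to get $K_U \subseteq \frakg_X$, and both deduce $A(U) \subseteq U$ from $A.q \in T_qX \subseteq U$ together with $U = \langle X \rangle$. The only difference is that you make explicit that tangent spaces computed in $U$ and in $\CC^n$ agree (via $I(X) = I_U(X)\,\CC[y] + (y_{k+1},\ldots,y_n)$), which the paper leaves implicit in its ``it suffices to show'' step.
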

\begin{proof}
For the inclusion $K_U \subseteq \frakg_X$, note that any $A \in K_U$ 
satisfies $A.q = 0 \in T_qX$ for all~$q \in X \subseteq U$, so 
\Cref{thm: frakg via tangent spaces} gives $A \in \frakg_X$.

For the second statement, note that $\frakgl_n/K_U \cong \Hom(U,\CC^n)$, 
and $\frakgl(U) \subseteq \Hom(U,\CC^n)$ consists of those maps 
sending $U$ to itself. It suffices to show that for any $A \in \frakg_X$, 
the restriction $A|_U$ maps $U$ into $U$. By \Cref{thm: frakg via tangent spaces}, for every $q \in X$ we have $A.q \in T_qX \subseteq U$, since $X \subseteq U$ implies $T_qX \subseteq U$.
 Thus $A.q \in U$ for all $q \in X$, and 
since $A$ is linear and~$U = \langle X\rangle$, we conclude $A(U) \subseteq U$.
\end{proof}}} 

{{\begin{prop}\label{prop: cylinders}
Let $L \subseteq \CC^n$ be a linear subspace and let $\pi\colon \CC^n 
\to \CC^n/L$ be the quotient map. Let $X \subseteq \CC^n$ be an algebraic variety which is a cylinder
over $L$, that is $X = \pi^{-1}(\pi(X))$. Let $P_L
\subseteq \frakgl_n$ be the subspace of linear maps $A\colon \CC^n \to
\CC^n$ with $\im(A) \subseteq L$. Then $P_L \subseteq \frakg_X$.
Moreover, if $\pi(X)$ is not a cylinder, 
then $\frakg_X$ is the preimage in $\frakgl_n$ of the symmetry Lie 
algebra~$\frakg_{\pi(X)}$ of $\pi(X)$ in $\frakgl(\CC^n/L)$, under 
the natural map $\frakgl_n \to \frakgl_n/P_L \cong \Hom(\CC^n, \CC^n/L)$.
\end{prop}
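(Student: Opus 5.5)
We argue exactly as in the proof of \Cref{prop: linearly degenerate}, using \Cref{thm: frakg via tangent spaces} throughout.

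\emph{Inclusion $P_L \subseteq \frakg_X$.} Since $X = \pi^{-1}(\pi(X))$, we have $X + L = X$. Hence for every $q \in X$ the affine line $q + tv$ with $v \in L$ lies in $X$, so $L \subseteq T_qX$ for every $q \in X$. If $A \in P_L$, then $A.q \in L \subseteq T_qX$ for all $q$, and \Cref{thm: frakg via tangent spaces} gives $A \in \frakg_X$.

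\emph{Reduction of $\frakg_X$ to $\frakg_{\pi(X)}$.} Since $\frakg_X$ contains $P_L$ and $P_L$ is the kernel of $\frakgl_n \to \Hom(\CC^n,\CC^n/L)$, $A \mapsto \pi\circ A$, it suffices to prove two things:
\begin{enumerate}
\item every $A \in \frakg_X$ preserves $L$, so that $A$ induces $\bar A \in \frakgl(\CC^n/L)$ with $\pi \circ A = \bar A \circ \pi$;
\item such an $A$ lies in $\frakg_X$ if and only if $\bar A \in \frakg_{\pi(X)}$.
\end{enumerate}

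For (2), we compare tangent spaces. On a dense open set of smooth points $q$ of $X$, the point $\pi(q)$ is smooth on $\pi(X)$, and the cylinder structure gives $T_qX = \pi^{-1}(T_{\pi(q)}\pi(X))$. Hence $A.q \in T_qX$ if and only if $\bar A.\pi(q) \in T_{\pi(q)}\pi(X)$. Since these points $\pi(q)$ are dense in $\pi(X)$, \Cref{thm: frakg via tangent spaces} applied on both sides yields the equivalence.

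\emph{Main obstacle: step (1).} This is where the hypothesis that $\pi(X)$ is not a cylinder enters. Let $A \in \frakg_X$. I would use the group rather than the Lie algebra: $G_X^0$ acts on $X$. Define $L_X \coloneq \{v \in \CC^n \mid X + v = X\}$, the maximal linear space of translations preserving $X$; it is a linear subspace because $X$ is a cone-free closed set, and it is the stabilizer of $X$ under translation, which is closed. For $g \in G_X$ we have $g(X+v) = X + gv$, so $gL_X = L_X$. Thus $\frakg_X$ preserves $L_X$.

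It remains to show $L_X = L$. Certainly $L \subseteq L_X$. If the inclusion were strict, then $\pi(L_X)$ would be a nonzero subspace of $\CC^n/L$ and $\pi(X) + \pi(L_X) = \pi(X)$, so $\pi(X)$ would be a cylinder over $\pi(L_X)$, contradicting the hypothesis. Hence $L = L_X$ is $\frakg_X$-stable.

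Alternatively, step (1) can be done infinitesimally. For $v \in L$, we have $q + tv \in X$ for all $t$, so $A(q + tv) \in T_{q+tv}X$. The tangent space $T_{q+tv}X$ equals $T_qX$, since translation by $tv$ is an automorphism of $X$. Hence $Av \in T_qX$ for generic $q$. This shows that $\pi(Av)$ lies in $T_{\bar q}\pi(X)$ for all generic $\bar q$, so translation along $\pi(Av)$ is tangent to $\pi(X)$ everywhere, which makes $\pi(X)$ a cylinder unless $\pi(Av) = 0$. I expect the group-theoretic argument via $L_X$ to be cleaner, and I would present that one.
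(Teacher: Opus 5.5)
Your proof is correct. The inclusion $P_L\subseteq\frakg_X$ and your step (2) use the same tangent-space argument as the paper; the paper states (2) only implicitly, through $T_{\pi(q)}\pi(X)=T_qX/L$.

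For the key step (1), the paper's proof is your \emph{alternative} infinitesimal argument. For $v\in L$, comparing $A.q$ and $A.(q+v)$ shows that $A.v\in T_qX$. Hence $\pi(A.v)$ lies in every tangent space of $\pi(X)$, and this intersection is declared trivial because $\pi(X)$ is not a cylinder.

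Your preferred route, through the translation stabilizer $L_X$, is genuinely different and gains two things.
\begin{enumerate}
\item It proves the stronger, group-level statement that all of $G_X$, not just $\frakg_X$, stabilizes $L$.
\item It uses the non-cylinder hypothesis directly in its definitional form. The paper tacitly needs the fact that a vector $w$ lying in all tangent spaces of $\pi(X)$ forces $\pi(X)+\CC w=\pi(X)$. That fact requires an integration step: $\partial_w$ preserves $I(\pi(X))$, hence so does $f\mapsto f(\cdot+tw)=\sum_k t^k\partial_w^k f/k!$.
\end{enumerate}

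The price is that you must know $L_X$ is a linear subspace, and your stated reason (``cone-free closed set'') is not a valid justification. A correct argument runs as follows.
\begin{enumerate}
\item $L_X=\bigcap_{q\in X}(X-q)$ is Zariski closed. Here Noetherianity gives that $X+v\subseteq X$ already forces $X+v=X$, via the descending chain $X\supseteq X+v\supseteq X+2v\supseteq\cdots$.
\item $L_X$ is an additive subgroup.
\item In characteristic $0$, the Zariski closure of $\ZZ v$ is $\CC v$, so $L_X$ is a linear subspace.
\end{enumerate}

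A simpler fix is to define $L_X$ as the largest linear subspace $M$ with $X+M=X$; this exists because the sum of two such subspaces is again one. Each $g\in G_X$ sends such an $M$ to another one, since $X+gM=g(X+M)=X$, and it preserves inclusions. Hence $g$ fixes the largest one. The rest of your argument, namely $L\subseteq L_X$ with equality because otherwise $\pi(X)$ would be a cylinder over $\pi(L_X)\neq 0$, then goes through unchanged.
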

\begin{proof}
For the inclusion $P_L \subseteq \frakg_X$, note that any $A \in P_L$ 
satisfies $A.q \in L \subseteq T_qX$ for all $q \in X$ since $X$ is 
a cylinder over $L$, so \Cref{thm: frakg via tangent spaces} gives 
$A \in \frakg_X$.

For the second statement, note that $\frakgl_n/P_L \simeq 
\Hom(\CC^n, \CC^n/L)$, and it suffices to show that for any 
$A \in \frakg_X$, viewed as an element of $\Hom(\CC^n,\CC^n/L)$, 
the restriction $A|_L$ is zero. For every $q \in X$ and $v \in L$ 
we have $q+v \in X$ and $T_{q+v}X = T_qX$, so $T_{\pi(q)}\pi(X) = 
T_qX/L$. By \Cref{thm: frakg via tangent spaces}, both $A.q$ and 
$A.(q+v)$ lie in $T_{\pi(q)}\pi(X)$, so by linearity of $A$, their 
difference~$A.v$ also lies in $T_{\pi(q)}\pi(X)$ for every $q \in X$. 
Thus, $A.v$ lies in the intersection of all tangent spaces of $\pi(X)$, 
which is trivial since $\pi(X)$ is not a cylinder. Hence $A|_L = 0$, as desired.
\end{proof}
}}

\Cref{prop: linearly degenerate} and \Cref{prop: cylinders} are \emph{dual} to each other  in a precise sense, which we record here. 
\begin{re}
It is a classical fact that the dual variety of $X$ is a cylinder 
if and only if $X$ is contained in a hyperplane; this is usually 
stated in the projective setting, where cylinders are replaced by 
cones. Moreover, the symmetry Lie algebra is preserved under duality, 
that is, the symmetry Lie algebras of $X$ and its dual variety $X^\vee$ 
are identified by the natural isomorphism $\frakgl(\CC^n) \simeq \frakgl((\CC^n)^*)$; see \cite[Proposition~3.11]{GHL}.
 In summary, the statement of \Cref{prop: cylinders} for a cylinder $X$ can be recovered from \Cref{prop: linearly degenerate} applied to $X^\vee$. 
\end{re}
{{We conclude this section with a remark about affine symmetries, that is the group of transformations on $\CC^n$ generated by $\GL_n$ and translations; see also \cite[Section~5]{Kahle25}.}} 
\begin{re}\label{rmk: affine symmetries}
Let $X \subseteq \CC^n$ be an algebraic variety. If $X$ is not contained in an affine hyperplane, then there is a bijection between affine symmetries of $X$, that is, linear transformations in $\GL_n \ltimes \CC^n$ which stabilize $X$, and linear symmetries of $X \times \{1\}$ in $\GL(\CC^n \oplus \CC^1)$. The bijection is given by 

\begin{align*}
\GL_n \ltimes \CC^n &\to  \GL(\CC^n \oplus \CC^1) \\
({{q}} \mapsto g{{q}} + v) &\mapsto 
\begin{pmatrix}
g & v \\
0 & 1
\end{pmatrix}
\end{align*}
Any symmetry of $X$ in $\GL_n \ltimes \CC^n$ maps to a symmetry of 
$X \times \{1\}$ in $\GL(\CC^n \oplus \CC^1)$ under this map. 
Conversely, assume $\left(\begin{smallmatrix} g & v \\ w^T & c 
\end{smallmatrix}\right)$ stabilizes $X \times \{1\}$. Then for 
every ${{q}} \in X$ we have~$w^T {{q}} + c = 1$. Since $X$ is not contained 
in an affine hyperplane, we conclude $w = 0$ and $c = 1$.
\end{re}

\section{A polynomial-time algorithm for computing the symmetry Lie algebra}
\label{sec:CompSym}
This section presents \Cref{alg:Symmetry}, a polynomial-time probabilistic algorithm that computes 
$\frakg_X$ of a variety $X$ given a rational parametrization 
 $\phi=(f_1,\ldots,f_n)$~of~$X$. 

\begin{algorithm}
\caption{{Computes a basis for $\frakg_X$ where 
$X = \overline{\im(\phi)}$.}}\label{alg:Symmetry}
\begin{algorithmic}[1]
\Require The parametrization $\phi(x) = (f_1 ,\ldots, f_n) \in \QQ(x_1 ,\ldots, x_m)^n$ {{and $\epsilon > 0$}};
\Ensure A basis of the symmetry Lie algebra $\frakg_X$;   \Comment{{{with probability $\geq 1-\epsilon$}}}
\State $d \gets$ an upper bound on the degrees of all  {{$a_i, b_i \in \QQ[x_1,\ldots,x_m]$, where
 $f_i = a_i/b_i$}};
\State $N \gets \lceil \epsilon^{-1} \cdot 2 \cdot n^3 \cdot d \cdot
(2m+1) \rceil$;
\State $L \gets \emptyset$;
\State $s \gets 0$;
\While{$s<n^2$}
	\State $p \gets$ uniformly random point in $\{1 ,\ldots, N\}^m$; 
	\State $q \gets \phi(p)$ (if undefined, \Return ``fail'');
	\State $J \gets \left(\frac{\partial f_i}{\partial
	x_j}(p)\right)_{ij} \in \QQ^{n \times m}$;
	\State $E \gets \text{a basis for the orthogonal complement in
	$\QQ^n$ of the column space of } J$;

	\State $L \gets L \cup \{e^T A.q \mid e \in
	E\}$;
     \Comment{{{linear forms in the entries of $A \in \frakgl_n$}}}
	\State $s \gets s+1$;
\EndWhile
\State \Return A basis of the solution space of the linear system  $L$;
\end{algorithmic}
\end{algorithm}

\begin{prop} \label{prop:Symmetry}
\Cref{alg:Symmetry} returns a basis for $\frakg_X$ with probability at least $1-\epsilon$.
\end{prop}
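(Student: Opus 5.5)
Let $V_0 = \frakgl_n$ and let $V_s$ be the solution space of the linear system $L$ after $s$ iterations of the while loop. The plan has two parts: an invariant, and a probability count over the $n^2$ iterations.

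\textbf{The invariant.} If every sampled $p$ lies in $U_\phi$, then by \Cref{cor: symmetry via jacobian} every $A \in \frakg_X$ satisfies the new equations $e^T A.q = 0$. Indeed, $A.q \in \im J$ and $e$ is orthogonal to $\im J$. So $\frakg_X \subseteq V_s$ for all $s$.

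\textbf{Progress while $V_s \neq \frakg_X$.} Suppose $V_{s} \supsetneq \frakg_X$ and pick $B \in V_s \setminus \frakg_X$. By \Cref{cor: symmetry via jacobian}, the set of $p \in U_\phi$ with $B.\phi(p) \notin \im J_\phi(p)$ is nonempty. If the next sample is such a $p$, then $V_{s+1} \subsetneq V_s$. Since $\dim V_0 = n^2$, at most $n^2$ strict drops are needed. The algorithm therefore succeeds if in every round either $V_s = \frakg_X$ already, or the sample lies in $U_\phi$ and cuts $V_s$ strictly. A union bound over the $n^2$ rounds then reduces everything to a per-round failure probability of at most $\epsilon/n^2$.

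\textbf{Per-round bound via Schwartz--Zippel.} Let $r = \dim X$. I would exhibit one nonzero polynomial $P$ whose nonvanishing at $p$ guarantees three things at once:
\begin{enumerate}[(i)]
\item $\phi$ is defined at $p$;
\item $\rk J_\phi(p) = r$;
\item $B.\phi(p) \notin \im J_\phi(p)$.
\end{enumerate}
The construction is as follows. Fix an $r$-subset of columns and an $(r+1)$-subset of rows. Consider the $(r+1)\times(r+1)$ minor of the augmented matrix $[J_\phi \mid B.\phi]$ using those columns and $B.\phi$. Choose the subsets so that this minor is a nonzero rational function. This is possible: at a generic $p$ the $n \times (r+1)$ matrix $[J_\phi(p) \mid B.\phi(p)]$ has rank $r+1$, because $J_\phi(p)$ has rank $r$ and $B.\phi(p)$ lies outside its column span. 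Nonvanishing of this minor forces (ii) and (iii).

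To clear denominators, write $\partial f_i/\partial x_j = (b_i\,\partial a_i/\partial x_j - a_i\,\partial b_i/\partial x_j)/b_i^2$. Multiply row $i$ by $b_i^2$; this keeps each entry of row $i$ a polynomial of degree at most $2d$. Then take
\[
P = \Bigl(\prod_i b_i\Bigr)\cdot \bigl(\text{cleared minor}\bigr).
\]
Its degree is at most $nd + (r+1)\cdot 2d \le nd + 2(m+1)d$, using $r \le m$. This is bounded by $n d(2m+1)$ up to the factor built into $N$.

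By Schwartz--Zippel, a uniform point of $\{1,\dots,N\}^m$ is a zero of $P$ with probability at most $\deg P/N$. With $N = \lceil \epsilon^{-1}\, 2n^3 d(2m+1)\rceil$ this is at most $\epsilon/n^2$, which is what the union bound requires.

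\textbf{Main obstacle.} The delicate point is the conditioning. The element $B$ depends on earlier samples, but each round's sample is independent of the past, so the per-round bound holds conditionally on the history. Rounds where $V_s = \frakg_X$ also need care: there I still need $p \in U_\phi$, or at least $\phi$ defined at $p$, so that no spurious equation is added. For this I use the polynomial $\prod_i b_i$ times a nonzero $r\times r$ minor of the cleared Jacobian, which has degree within the same bound.

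Combining these, with probability at least $1-\epsilon$ all rounds are good. In that event $V_{n^2} = \frakg_X$, and since $L$ is defined over $\QQ$, the returned basis is a basis of $\frakg_{X,\QQ}$, which spans $\frakg_X$.
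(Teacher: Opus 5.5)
Your proof is correct, but it is organized differently from the paper's. The paper treats the $n^2$ samples jointly. It takes the single condition on $(p_1,\ldots,p_{n^2})\in(\CC^m)^{n^2}$ that every $p_s$ lies in $U_\phi$ and that the stacked linear system has rank exactly $n^2-\ell$, where $\ell=\dim\frakg_X$. The rows of that system are taken to be the $(\dim X+1)$-minors of $[J_\phi\mid A.\phi]$ involving the last column. The paper bounds the degree of a hypersurface containing the failure locus by $2n^3d(2m+1)$, using the product of the denominators, a fixed $\dim X$-minor of $J_\phi$ at each point, and a fixed minor of size $n^2-\ell$ of the stacked system. It then applies Schwartz--Zippel once in $mn^2$ variables.

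You instead argue adaptively: the invariant $\frakg_X\subseteq V_s$, a witness $B\in V_s\setminus\frakg_X$ giving a low-degree polynomial in only $m$ variables per round, and a union bound with conditioning on the history.

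What each route buys:
\begin{itemize}
\item The paper's static argument needs no conditioning. However, it relies on a determinant of size up to $n^2$, and that this determinant is generically nonzero rests on the observation after \Cref{cor: symmetry via jacobian} that finitely many points already cut out $\frakg_X$. It also reasons with the minor formulation rather than the equations $e^TA.q$ the algorithm actually adds; the two agree on $U_\phi$.
\item Your argument works directly with the algorithm's equations and needs only $(r+1)$-minors, with $r=\dim X$. Its per-round degree $nd+2d(r+1)$ shows that $N$ of order $\epsilon^{-1}n^2(n+2m+2)d$ would already suffice, which is smaller than the paper's choice.
\end{itemize}

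One small correction. Multiplying row $i$ by $b_i^2$ does not make the last-column entry $(B.\phi)_i=\sum_k B_{ik}a_k/b_k$ a polynomial, because its denominator involves all the $b_k$. What clears it is the factor $\prod_k b_k$ in $P$, absorbed into the last column by multilinearity. This gives last-column entries of degree at most $(n+2)d$ and total degree at most $(n+2)d+2dr=nd+2d(r+1)$, which is exactly your stated bound, so nothing breaks.
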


\begin{proof}
Since $\frakg_X \subseteq \frakgl_n$ has dimension at most $n^2$, 
a choice of $n^2$ 
sufficiently general points~$p_1,\ldots,p_{n^2}$ in $U_\phi$ is sufficient for the computation of $\frakg_X$ in  \Cref{cor: symmetry via jacobian}, that is,  
\[
\frakg_X = \{ A \in \frakgl_n \mid A.\phi(p_s) \in \im J_\phi(p_s) 
\ \text{ for } s=1,\ldots,n^2 \}.
\]
A necessary and sufficient genericity condition is that the rank of the linear system 
\begin{equation}\label{eqn: linear system for frakg}
A.\phi(p_s) \in \im {{J_\phi}}(p_s) \text{ for } s=1 ,\ldots, n^2
\end{equation}
equals $n^2 - \ell$, where $\ell = \dim \frakg_X$. This condition defines an open subset of $U_\phi^{\times n^2}$ hence of 
$(\CC^m)^{\times n^2}$, whose complement is contained in a 
hypersurface in $(\CC^m)^{\times n^2}$ of degree at most 
$2n^3d(2m+1)$. Assuming this degree bound, the Schwartz-Zippel Lemma implies that 
the probability that a uniformly random point in 
$(p_1,\ldots,p_{n^2})\in \{1,\ldots,N\}^{mn^2}$ lies in this hypersurface is at most 
$2n^3d(2m+1)/N$. Taking $N \geq \epsilon^{-1} \cdot 2n^3d(2m+1)$ 
ensures that with probability at least $1-\epsilon$, the sampled 
points $(p_1,\ldots,p_{n^2})$ avoid this hypersurface, and
\Cref{alg:Symmetry} successfully computes $\frakg_X$.

The rest of this proof establishes the degree bound $2n^3d(2m+1)$. 

\begin{enumerate}[(i)]
\item The condition that $\phi$ is defined at $p$ fails on the 
hypersurface defined by the vanishing of $b \coloneq b_1\cdots b_n$, 
which has degree at most $nd$. The condition failing at one or more 
of $p_1,\ldots,p_{n^2}$ is therefore contained in a hypersurface of 
degree at most $n^3d$ in $(\CC^m)^{\times n^2}$.

\item The condition $\rk J_\phi(p) = \dim X$ fails on the 
hypersurface defined by the vanishing of the minors of $J_\phi(p)$ 
of size $\dim X$, and it suffices to consider a single fixed such 
minor. The entries of $J_\phi(p)$ are partial derivatives 
$\frac{\partial f_i}{\partial x_j}$, each of the form 
$\frac{1}{b^2}$ times a polynomial of degree at most $2dn$. 
Ignoring the common denominator, the vanishing of a fixed minor 
of size $\dim X$ is equivalent to the vanishing of a polynomial 
of degree at most~$2dn \cdot \dim X \leq 2dnm$. The condition 
failing at one or more of $p_1,\ldots,p_{n^2}$ is contained in 
a hypersurface of degree at most $2n^3dm$ in $(\CC^m)^{\times n^2}$.

\item The complement of the condition that the rank of the linear system in \eqref{eqn: linear system for frakg} is equal to $n^2 - \ell$ is equivalent to the vanishing of all minors of size~$n^2 - \ell$ of the matrix of the linear system. As in the previous case, this is contained in the hypersurface defined by the vanishing of a single fixed minor. The equations of the linear system are the minors of size $(1+ \dim X)$ of the matrix $[{{J_\phi}} (p) | A.\phi(p)]$ which involve the last column. The entries of $A.\phi(p)$ are of the form $\frac{1}{b}$ multiplied by a polynomial of degree at most $nd$ and by a linear form in the entries of $A$. As in the previous case, we may ignore denominators, and deduce that the coefficients of the linear system in \eqref{eqn: linear system for frakg} are polynomials of degree at most $nd + 2nd {{\cdot (\dim X)}}$, which is bounded above by $nd(2m+1)$. Therefore, a fixed minor of size~$n^2 - \ell$ of the matrix of the linear system in \eqref{eqn: linear system for frakg} is a polynomial of degree at most $(n^2 - \ell) nd(2m+1)$, which {{in turn}} is bounded above by $n^3d(2m+1)$. 
\end{enumerate} 
{{Adding the three contributions together, we obtain that the complement of the required sufficiently general condition  is contained in a hypersurface of degree at most~$ 2n^3 d (2m+1)$.}} 
\end{proof}

To ensure the input is nontrivial and that all variables appear, 
we make the following technical assumptions on the encoding of the 
input of \Cref{alg:Symmetry}.
\begin{enumerate}
\item If the $f_i$ are given in a standard encoding as lists of 
pairs $(c,\alpha)$ of coefficients and exponent vectors, then at 
least one $f_i$ is nonzero, that is, represented by a nonempty list.
\item If the $f_i$ are given as arithmetic circuits, then every 
variable $x_j$ appears as the label of at least one node in at least one of the $n$ circuits.
\end{enumerate}
Under these technical assumptions, \Cref{alg:Symmetry} runs in 
polynomial time. This is the content of the next result.
\begin{prop}\label{prop:running time for Algorithm 1}
The integer $d$ in Step 1 of \Cref{alg:Symmetry} can be chosen so that \Cref{alg:Symmetry} has polynomial running time in the bit length of the input and in $\log(1/\epsilon)$. 
\end{prop}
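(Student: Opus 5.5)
We need to show that $d$ can be chosen so that every step of \Cref{alg:Symmetry} runs in time polynomial in the input bit length $\ell_\phi$ and in $\log(1/\epsilon)$. The plan is to bound three things in turn: the size of $d$ (hence of $N$ and of the random samples), the bit size of the rational numbers $q=\phi(p)$ and $J=J_\phi(p)$, and the cost of the linear algebra.

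\textbf{Choice of $d$.} In the standard encoding we take $d$ to be the maximal total degree of the exponent vectors $\alpha$ appearing in the $a_i,b_i$. This number is bounded by the sum of the bit lengths of the entries of the exponent vectors, so $\log d$ is polynomial in $\ell_\phi$. In the circuit encoding we first rewrite each circuit as a quotient $a_i/b_i$ by propagating numerator/denominator pairs through the gates: $(a,b)+(a',b')=(ab'+a'b,bb')$, products multiply componentwise, and inversion swaps the pair. Each gate at most doubles the degree plus one, so we may take $d=2^{s}$, where $s$ is the total number of gates. Then $\log d\le s\le \ell_\phi$. In both cases $\log N = O(\log(1/\epsilon)+\log n+\log d+\log m)$ is polynomial in the input size. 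The technical assumptions guarantee that $m$ and $n$ are bounded by $\ell_\phi$, so sampling $p\in\{1,\dots,N\}^m$ costs polynomially many random bits.

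\textbf{Evaluating $q$ and $J$.} For the standard encoding we evaluate the monomials $p^\alpha$ by repeated squaring. Each entry of $p$ has $O(\log N)$ bits, so $p^\alpha$ has $O(|\alpha|\log N)$ bits, where $|\alpha|$ is written in binary in the input. This is the point I expect to be the main obstacle: exponents given in binary can make $p^\alpha$ exponentially long. I would first try to resolve this by measuring exponents in unary, i.e.\ including the degree in the bit length of the standard encoding, as is customary for sparse polynomials. Alternatively, I would argue that the output size $|\alpha|\log N$ is already polynomial whenever the degrees are part of the input size.

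For the circuit encoding, exact evaluation can also blow up: repeated squaring yields numbers of $2^s$ bits. The fix is to evaluate the circuit modulo a random prime, or to run the algorithm over a finite field $\mathbb{F}_P$ with $P>N$ and lift the result back to $\QQ$. The first thing I would try is to show that the result is unchanged with high probability, by bounding the number of primes that divide the relevant nonzero minors; these minors have bit size at most exponential in $s$. The partial derivatives $\partial f_i/\partial x_j$ are obtained by forward-mode automatic differentiation (Baur--Strassen), which at most triples the circuit size, so $J$ is computed by evaluating a circuit of polynomial size. In the standard encoding, derivatives of $a_i/b_i$ are computed explicitly from the quotient rule.

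\textbf{Linear algebra.} The kernel basis $E$ and the final solution space of the system $L$, which has $O(n^3)$ equations in $n^2$ unknowns, are computed by fraction-free Gaussian elimination (Bareiss). Its running time is polynomial in the dimensions and in the bit sizes of the entries, and the intermediate numbers are minors, so by Hadamard's bound their bit size stays polynomial. Combining the three bounds gives a total running time polynomial in $\ell_\phi$ and $\log(1/\epsilon)$.
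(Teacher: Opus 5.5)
Your outline follows the paper's. You take $d$ to be the maximal degree in the standard encoding and $d=2^s$ for circuits, and you use the non-triviality assumptions to bound $m,n$ by the input length, so $\log N$ is polynomial in it and in $\log(1/\epsilon)$. You then evaluate $\phi(p)$ and $J_\phi(p)$ (via Baur--Strassen for circuits) and finish with polynomial-time linear algebra.

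For the standard encoding, your worry about binary exponents is legitimate. Your fix, counting degrees in unary, is exactly the convention implicit in the paper's assertion that $d$ itself, not merely its bit length, is polynomial in the input length. With that convention your argument and the paper's coincide. (Minor point: Baur--Strassen is reverse-mode differentiation. Forward mode costs a factor $m$ for the full Jacobian, which is also harmless.)

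The circuit encoding is where your proposal has a genuine gap. Computing modulo a random prime changes \Cref{alg:Symmetry}, and the step ``lift the result back to $\QQ$'' is not justified---and cannot be. A random prime of polynomial bit length lets you certify ranks, hence $\dim\frakg_X$. But rational reconstruction only recovers rationals of polynomially bounded size, whereas the required output can be exponentially long.

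Take $\phi=(x,cx)$ with $c=2^{2^s}$ produced by $s$ squarings. Then $X=\CC\cdot(1,c)$, and $\frakg_X$ is the hyperplane $-ca_{11}-c^2a_{12}+a_{21}+ca_{22}=0$ in $\frakgl_2$. Its normal vector $w=(-c,-c^2,1,c)$ is primitive, so the lattice $w^\perp\cap\ZZ^4$ has covolume $\|w\|>c^2$. Clearing denominators in any basis of $\frakg_{X,\QQ}$ yields three independent vectors of this lattice whose norms multiply to more than $c^2$. So some entry has $\Omega(2^s)$ bits, while the input has $O(s\log s)$ bits. Thus no choice of $d$ and no evaluation trick gives an explicit-output algorithm that is polynomial in the bit length of a circuit.

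The paper's proof does not handle this either. Baur--Strassen bounds the number of arithmetic operations, not bit sizes, so for circuits it only yields polynomiality in an arithmetic (BSS-type) cost model. Your diagnosis of the blow-up is right. What is missing is not a sharper analysis of the evaluation step but a change in the claim: either an extra hypothesis or a different cost model. One such hypothesis is a polynomial a priori bound on the degrees and coefficient sizes of the $f_i$. Under it, multi-modular evaluation of $\phi(p)$ and $J_\phi(p)$ with rational reconstruction can be made to work.
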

\begin{proof}
Under the non-triviality assumption{{s}} (1) and (2), both $m$ and $n$ 
are bounded by a linear polynomial in the bit length of the input. 
For $m$:  in the standard encoding, some $f_i$
has a term represented by a pair $(c,\alpha)$, and since  $\alpha$ has $m$ components; $m$ is bounded by the size of $\alpha$;  in the circuit encoding,
 every variable appears at least once, so $m$ is bounded by the number of nodes. For $n$:
the input consists of $n$ rational functions, so $n$ is bounded 
by the size of the input list.

The bound $d$ on the degrees of numerators and denominators of the $f_i$ can be chosen polynomially in the bit length of the input. In the standard encoding, one may take $d = \max \{ \deg(a_i) , \deg(b_i)  \mid i = 1 ,\ldots, n \}$, which in fact is polynomial in the input length. In the circuit encoding, one may choose $d = 2^s$, where $s$ is the size of the largest circuit among the ones computing the $f_i$, and so, the bit length of $d$ is linear in $s$. 
As a consequence, the constant $N$ in \Cref{alg:Symmetry} has bit length polynomial in the bit length of the input. 

The loop in \Cref{alg:Symmetry} runs $n^2$ times. In each iteration, 
the computation of $\phi(p)$ and of~$J_\phi(p)$ can be performed in 
polynomial time: in the standard encoding this is immediate, as 
derivatives can be computed symbolically; in the circuit encoding, 
this follows from \cite[Theorem~2]{BaSt83}. Computing a basis for 
the orthogonal complement of the column space of $J_\phi(p)$ and 
updating $L$ also take polynomial time. The resulting linear system 
has at most $(m+1)n^2$ equations in $n^2$ unknowns and can therefore 
be solved in polynomial time. By \Cref{cor: symmetry via jacobian}, 
a basis of its solution space is a basis of $\frakg_X$. 

Finally, the only dependence on $\epsilon$ is through the choice of $N$. 
Since $\log N$ grows linearly in~$\log(1/\epsilon)$, this contributes 
a $\log(1/\epsilon)$ term to the running time, which remains 
polynomial in the input size and in $\log(1/\epsilon)$.
\end{proof}

This concludes  the proof of \Cref{thm:Main}. 

\begin{re} \label{re:Adapt}
In a practical implementation of \Cref{alg:Symmetry} one may
first try to guess $k\coloneq \dim X $ by computing $\rk {{J_{\phi}}}(p)$ for a number of
values of $p$  at which $\phi$ is defined, and taking the 
maximum rank observed. In each iteration of the loop, one may 
resample $p$ until $\phi$ is defined at $p$ and $\rk J_\phi(p) = k$.  Furthermore,
instead of iterating the loop $n^2$ times, one may iterate it until $L$
stabilizes, or stabilizes for a number of steps. In this manner, the
output of the algorithm is never ``fail'', and moreover, if the initial
guess for $k$ is correct, then the output is always a subspace containing~$\frakg_\QQ$. The running 
time becomes a random variable, but its expected value remains 
polynomial.
\end{re}

\begin{re}
\Cref{alg:Symmetry} shows that the problem of deciding
whether $\dim(\frakg_{\QQ}) \geq \ell$ for some input $\ell$ is in the {{complexity class BPP (bounded-error probabilistic polynomial time).}} We do not know 
whether this problem is in {{RP (randomized polynomial time)}} or in 
co-RP.  The point here is that \Cref{alg:Symmetry}
can output both an over-estimate of $\frakg_\QQ$ (e.g.~if the points
sampled satisfy $\rk {{J_{\phi}}}(p)=\dim X $ but are collectively in too special
position) or an under-estimate of~$\frakg_\QQ$ (e.g.~if all sampled
points $p$ satisfy $\rk {{J_{\phi}}}(p)=0<\dim X $).
\end{re}

\begin{re}\label{re: linearly degenerate}
If $X$ is either linearly degenerate or a cylinder, \Cref{prop: linearly degenerate} and \Cref{prop: cylinders} reduce the computation of $\frakg_X$ to the computation of the symmetry Lie algebra of a variety in a smaller ambient space. In the case of linearly degenerate varieties, the subspace $\langle X \rangle$ can be computed efficiently by sampling points on $X$ and computing their linear span. In the case of cylinders, the linear space $L$ over which one should quotient $X$ can be computed by computing the linear span of the dual variety of $X$; points of the dual variety can be sampled efficiently by sampling a point ${{q\in }}X$ and then a random hyperplane containing $T_{{q}} X$. These procedures return $L$ and $\langle X \rangle$ with high probability. In practice, an efficient implementation of \Cref{alg:Symmetry} should first check whether $X$ is linearly degenerate or a cylinder, and reduce to the non-degenerate case accordingly.
\end{re}
\begin{re}
The output of \Cref{alg:Symmetry} can be thought of as the Lie
algebra of those vector fields $\sum_{i=1}^n h_i \frac{\partial}{\partial y_i}$
on $\CC^n$ that are tangent to $X$ and for which the $h_i$ are homogeneous linear polynomials.  One can easily adapt \Cref{alg:Symmetry} to the setting of affine linear symmetries, as in \Cref{rmk: affine symmetries}: in this case the elements of the resulting Lie algebra can be interpreted as vector fields tangent to $X$ with affine linear coefficients $h_i$. Moreover, fixing any degree $e$, a similar algorithm yields a basis of the space of vector fields tangent to $X$ where the $h_i$ have degrees $\leq e$. But for $e \geq 2$ these typically do not form a Lie algebra, since the commutator of two such vector fields may have coefficient polynomials of degree up to $2e-1>e$. Also, these vector fields may not come from the action of any finite-dimensional algebraic group.
\end{re}

\section{Testing $\GL$-binomiality} \label{sec:Binomial}
We say that an ideal $I \subseteq \CC[y_1 \vvirg y_n]$ is \emph{$\GL$-binomial} if there exists $g \in \GL_n$ such that $g I$ is generated by binomials, that is, polynomials of the form $y^\alpha - cy^\beta$ where~$\alpha,\beta \in \ZZ_{\geq 0}^n$ and~$c \in \CC$. A variety $X \subseteq \CC^n$ is \emph{$\GL$-binomial} if $I(X)$ is a $\GL$-binomial ideal. Note that we 
allow $c = 0$, consistent with \cite{ES96}.

This section presents {{\Cref{alg:Binom}}},  a polynomial-time probabilistic algorithm that decides, given a parametrization of a variety $X \subseteq \CC^n$, whether $X$ is \emph{$\GL$-binomial}.  The algorithm combines 
\Cref{alg:Symmetry} with ideas from \cite{Kahle25} and known facts 
about algebraic groups.

\begin{algorithm}
\caption{Tests probabilistically whether $X=\overline{\im\phi}$ is $\GL$-binomial.}\label{alg:Binom}
\begin{algorithmic}[1]
\Require $\phi=(f_1 ,\ldots, f_n) \in \QQ(x_1 ,\ldots, x_m)^n$ {{and $\epsilon>0$}};
\Ensure {{whether $I$ binomial after a linear coordinate change}};
 \Comment{{{with probability $\geq
(1-\epsilon)^3$}}} 
\State $M \gets \lceil \epsilon^{-1} \cdot n^2 \rceil$; 
\State ${{\mathcal{B}}} \gets $ the output of~\Cref{alg:Symmetry}; \Comment{{{with probability $\geq
1-\epsilon$}}} 
\If{$\mathcal{B} = $ ``fail''} \Return ``fail''; \EndIf
\State $L \gets $ linear space spanned by ${{\mathcal{B}}}$;
\State $A \gets $ a linear combination of ${{\mathcal{B}}}$ with 
uniformly random coefficients in $\{1 ,\ldots, M\}$;
\State $A_s \gets $ semisimple part of $A$; 
\State ${{\mathcal{C}}} \gets $ a basis of $\{D \in L \mid [A_s,D]=0\}$;
\For{$D \in {{\mathcal{C}}}$}
	\State $D \gets $ semisimple part of $D$;
\EndFor;
\State $N \gets \lceil \epsilon^{-1} \cdot n \cdot d \cdot (1+2m+n) \rceil$;
\State $p \gets $ uniformly random point in $\{1 ,\ldots, N\}^m$;
\State $q \gets \phi(p)$; 
\State{if undefined, \Return ``fail''}
\State $J \gets (\frac{\partial f_i}{\partial x_j})(p)$;
\State \If{$\rk(J) = \dim \langle \{Dq \mid D \in {{\mathcal{C}}}\}\rangle$}
\State $b \gets $ ``true''; 
\Else \State $b \gets $ ``false''; \EndIf;
\State \Return $b$ 
\end{algorithmic}
\end{algorithm}

The correctness of \Cref{alg:Binom} uses \Cref{lm:Cartan}. {{For a matrix $A \in \frakgl_n$, the adjoint 
map $\ad(A)\colon \frakgl_n \to \frakgl_n$ is defined by 
$\ad(A)(B) = [A,B] = AB - BA$. The Jordan decomposition guarantees 
that any $A$ can be written uniquely as $A = A_s + A_n$, where $A_s$ 
is semisimple, $A_n$ is nilpotent, and $A_s$ has the same eigenvalues as $A$.}}

\begin{lm} \label{lm:Cartan}
Let $H$ be a closed connected subgroup of $\GL_n$, $\mathfrak{h}$ its Lie algebra, and $A \in \mathfrak{h}$ such that the generalized eigenspace of $\ad(A)\colon \mathfrak{h} \to \mathfrak{h}$ with
eigenvalue $0$ has the minimal dimension among all elements of
$\mathfrak{h}$. Let $A=A_s+A_n$ be the Jordan decomposition of~$A$. Then the centralizer
$Z \coloneq \{D \in \mathfrak{h} \mid [A_s,D]=0\} $
is the Lie algebra of a Cartan subgroup of $H$, and the set 
$ Z_s \coloneq \{D_s \mid D{{=D_s+D_n}} \in Z\} $
is the Lie algebra of a maximal torus of $H$.
\end{lm}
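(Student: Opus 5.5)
The plan is to reduce the statement to standard facts about regular elements, Cartan subalgebras and Cartan subgroups of connected linear algebraic groups, as in \cite{Bor91}. First, $\mathfrak{h}$ is an algebraic Lie algebra, so it is closed under taking semisimple and nilpotent parts. Hence $A_s, A_n \in \mathfrak{h}$, and $\ad(A)|_{\mathfrak{h}}$ has Jordan decomposition $\ad(A_s)|_{\mathfrak{h}} + \ad(A_n)|_{\mathfrak{h}}$. It follows that the generalized $0$-eigenspace of $\ad(A)$ on $\mathfrak{h}$ equals the kernel of the semisimple operator $\ad(A_s)|_{\mathfrak{h}}$, which is exactly $Z$. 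So $Z = \mathfrak{h}^0(A)$ is the Fitting null component of a regular element, in the sense of minimal dimension of that null component. By the classical result, valid for any finite-dimensional Lie algebra over $\CC$, $\mathfrak{h}^0(A)$ for regular $A$ is a Cartan subalgebra of $\mathfrak{h}$, i.e.\ nilpotent and self-normalizing.

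Second, I would identify $Z$ with the Lie algebra of a Cartan subgroup. Let $S$ be the smallest algebraic torus whose Lie algebra contains $A_s$; this is the Zariski closure of the one-parameter group $\exp(tA_s)$, which is a torus because $A_s$ is semisimple. Its centralizer $C_H(S)$ has Lie algebra $\mathfrak{z}_{\mathfrak{h}}(S) = \{D \in \mathfrak{h} \mid [A_s, D] = 0\} = Z$, since $S$ and $A_s$ have the same invariants by construction. Then $C_H(S)^0$ is connected with Lie algebra $Z$. Regularity gives $\dim Z = \operatorname{rank}$ of $\mathfrak{h}$, which is the dimension of a Cartan subgroup, namely $C_H(T)$ for a maximal torus $T$. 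A torus $S$ whose centralizer has this minimal dimension is contained in a maximal torus $T$ with $C_H(T)^0 \subseteq C_H(S)^0$ of equal dimension, hence the two coincide. Since Cartan subgroups are connected in a connected group (\cite[\S11]{Bor91}), $C_H(T)$ is a Cartan subgroup with Lie algebra $Z$.

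Third, a Cartan subgroup $C = C_H(T)$ is nilpotent, so $C = C_s \times C_u$ with $C_s = T$ its unique maximal torus \cite[Prop.~10.6, 11.7]{Bor91}. Consequently $\mathrm{Lie}(C) = \mathrm{Lie}(T) \oplus \mathrm{Lie}(C_u)$, where the first summand consists of semisimple and the second of nilpotent commuting elements. For $D \in Z$, decompose $D = D' + D''$ accordingly. Since $D'$ and $D''$ commute and are respectively semisimple and nilpotent, uniqueness of the Jordan decomposition gives $D_s = D' \in \mathrm{Lie}(T)$. Conversely $\mathrm{Lie}(T) \subseteq Z$ consists of semisimple elements. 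Hence $Z_s = \mathrm{Lie}(T)$, which in particular is a linear subspace.

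The main obstacle is the second step. One must relate minimality of $\dim \mathfrak{h}^0(A)$ to a regularity condition at the group level, and make sure that $\ker \ad(A_s)|_{\mathfrak{h}}$ is the Lie algebra of the centralizer of a torus rather than merely containing it. For the latter I would use that centralizers of tori are smooth: in characteristic zero, $\mathrm{Lie}(C_H(S)) = \mathfrak{h}^S$ \cite[9.2]{Bor91}. For the former I would use the density of regular semisimple-containing tori, for instance by the argument that $\dim \mathfrak{h}^0$ is minimized exactly when $\dim C_H(S)$ equals the rank.
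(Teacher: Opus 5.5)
Your proof is correct and is essentially the paper's argument: both identify $Z$ with $\ker\ad(A_s)|_{\mathfrak h}$, use minimality to show that it equals $\mathfrak h^T=\mathrm{Lie}(C_H(T))$ for a maximal torus $T$ with $A_s\in\mathrm{Lie}(T)$, and read off $Z_s=\mathrm{Lie}(T)$ from the decomposition $C_H(T)=T\times C_H(T)_u$ of the Cartan subgroup. The only variations are that you get $C_H(S)=C_H(T)$ from a dimension count starting at the torus $S=\overline{\exp(\CC A_s)}$, where the paper uses the $T$-weight decomposition and a cited lemma on regular elements, and that the fact you flag (the minimum of $\dim\mathfrak h^0(X)$ equals $\dim\mathfrak h^T$, because $\mathfrak h^0(X)\supseteq\mathfrak h^{T'}$ whenever $X_s\in\mathrm{Lie}(T')$ for a maximal torus $T'$, and generic elements of $\mathrm{Lie}(T)$ attain $\mathfrak h^T$) is used only implicitly in the paper too.
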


\begin{proof}
Since $A_s$ is semisimple, $H$ has a maximal torus $T$ whose Lie algebra
contains $A_s$. Since~$\ad(A)_s=\ad(A_s)$, the eigenspace of $\ad(A_s)$
with eigenvalue $0$ is the generalized eigenspace of $\ad(A)$ with
eigenvalue $0$. By minimality of the latter, $\ad(A_s)$ is nonzero on
{{every}} $T$-weight space in $\mathfrak{h}$ corresponding to a nonzero weight. 
{{By \cite[\S12.2, Lemma]{Bor91},
the (analytic) one-parameter subgroup $t \mapsto \exp(tA)$ contains a regular
semisimple element of $T$. Then $Z$ is the Lie algebra of the centralizer of $T$, which is a Cartan subgroup of $H$ by \cite[\S12.2 Proposition]{Bor91},}} and
$Z_s$ is the Lie algebra of $T$ by \cite[\S12.1]{Bor91}.
\end{proof}

\begin{prop}\label{prop: alg2 probability}
With probability at least $(1-\epsilon)^3$, the output of \Cref{alg:Binom} is correct.
\end{prop}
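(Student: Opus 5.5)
The plan is to split the success event into three independent good events, each of probability at least $1-\epsilon$, and then show the output is correct on their intersection. The events are:
\begin{itemize}
\item[(E1)] \Cref{alg:Symmetry} returns a basis of $\frakg_X$; this has probability $\geq 1-\epsilon$ by \Cref{prop:Symmetry}.
\item[(E2)] The random combination $A$ is \emph{regular} in $\frakg_X$, meaning the generalized $0$-eigenspace of $\ad(A)$ on $\frakg_X$ has minimal dimension.
\item[(E3)] The sampled point $p$ is generic for the final rank comparison.
\end{itemize}
The sampling steps use independent randomness, so the probabilities multiply and give $(1-\epsilon)^3$.

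\textbf{Correctness on the good events.} We take as the underlying criterion the result from \cite{Kahle25}: $X$ is $\GL$-binomial if and only if a maximal torus $T$ of $G_X^0$ has a dense orbit in $X$. A torus orbit closure is toric after a coordinate change, which gives binomiality; conversely, a binomial (irreducible) variety is a toric variety up to coordinates, and its torus lies in a maximal torus of $G_X$.

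Assume (E1) and (E2). Apply \Cref{lm:Cartan} with $H=G_X^0$ and $\mathfrak h=\frakg_X$. The set $\mathcal C$ after taking semisimple parts spans $Z_s=\mathrm{Lie}(T)$. We only need its span; the semisimple parts of a basis of $Z$ span $Z_s$ because $Z$ is a nilpotent Lie algebra, namely the Lie algebra of a Cartan subgroup $T\times U$ with $T$ central.

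The orbit $Tq$ is dense in $X$ if and only if $\dim Tq=\dim X$. For a generic $q\in X$, $\dim Tq=\dim \mathrm{Lie}(T)q=\dim\langle Dq\mid D\in\mathcal C\rangle$, and $\dim X=\rk J_\phi(p)$ for $p\in U_\phi$. Event (E3) is the requirement that $p\in U_\phi$ and that $\phi(p)$ lies in the open set where $\dim \mathrm{Lie}(T)\phi(p)$ attains its generic value. Under (E3) the comparison in the algorithm is exactly the density test.

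\textbf{Probability bounds.} These use Schwartz--Zippel.
\begin{itemize}
\item For (E2), write $A=\sum c_iB_i$. The coefficient of $t^{r}$ in the characteristic polynomial of $\ad(A)|_{\frakg_X}$, where $r$ is the minimal nullity (the rank of $\frakg_X$), is a nonzero polynomial in the $c_i$. Its degree is at most $\dim\frakg_X-r\le n^2$, so with $M\ge n^2/\epsilon$ the failure probability is at most $\epsilon$.
\item For (E3), the bad set is contained in the union of three hypersurfaces. The denominator $b$ has degree $\le nd$. A fixed nonzero $\dim X$-minor of $J_\phi$, after clearing denominators, has degree $\le 2nd\,m$. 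A fixed nonzero minor of size $\dim\mathrm{Lie}(T)q$ of the matrix with columns $D\phi(p)$, after clearing $b$, has degree $\le nd\cdot n$. The total is at most $nd(1+2m+n)$, matching the choice of $N$.
\end{itemize}

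\textbf{Main obstacle.} The hardest step is making the criterion rigorous. We must justify that $\GL$-binomiality of $X$ is equivalent to a maximal torus of $G_X^0$ having a dense orbit, citing \cite{Kahle25}. Two points need checking: irreducibility of $X$, which allows a prime binomial ideal to be toric up to coordinates, and conjugacy of maximal tori, which shows that any one maximal torus suffices. A secondary subtlety is that the semisimple parts of an arbitrary basis of $Z$ span $Z_s$. This follows because $Z=\mathrm{Lie}(T)\oplus\mathrm{Lie}(U)$ with the two summands commuting, so $D\mapsto D_s$ is the linear projection onto $\mathrm{Lie}(T)$.
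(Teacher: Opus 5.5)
Your proposal is correct and is essentially the paper's proof. It uses the same three events (\Cref{alg:Symmetry} succeeds; $A$ is regular via \Cref{lm:Cartan} and a characteristic-polynomial coefficient of degree $\le n^2$; $p$ avoids a hypersurface of total degree $nd(1+2m+n)$), the same Schwartz--Zippel bounds, and the same maximal-torus dense-orbit criterion from \cite{Kahle25} via conjugacy of maximal tori. The differences are minor: you treat the ``true'' and ``false'' cases uniformly, using the generic rank of $\mathrm{Lie}(T)q$ where the paper uses nonvanishing coordinates of $hq$, and you explicitly justify that $D\mapsto D_s$ is linear on $Z$, a point the paper uses only implicitly.
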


\begin{proof}
The computations up to and including the for-loop are intended to produce a basis for the Lie algebra of a maximal torus $S$ of the symmetry group $G_X$. The remaining lines test whether the $S$-orbit of a sufficiently general point in $X$ is dense in $X$. Following the argument of \cite[Section 3]{Kahle25}, this condition is equivalent to $\GL$-binomiality. We now explain why this gives the correct answer {{with probability at least $(1-\epsilon)^3$}}.

We write $(\CC^*)^n$ for the subgroup of $\GL_n$ consisting of
diagonal matrices. 
If $Y \subseteq \CC^n$ is
an irreducible variety, then the vanishing ideal {{$I(Y)$}} of $Y$ is binomial
if and only if the group of diagonal symmetries
\[ T=\{t \in (\CC^*)^n \mid t Y = Y \} \]
has a dense orbit on $Y$, and this happens if and only if the identity
component $T^0$ of $T$ has a dense orbit on $Y$. All of this is well
known: if $q \in Y$ is such that $Tq$ is dense in $Y$, then the vanishing ideal of $Y$ is spanned by all binomials of the form $y^\alpha-cy^\beta$
that vanish on $q$ and such that the kernel of the character $(\CC^*)^n
\to \CC^*,\ (t_1 ,\ldots, t_n) \mapsto \prod_{i=1}^n t_i^{\alpha_i-\beta_i}$
contains $T$. Conversely, if  $I(Y)$ is binomial, 
${{\mathrm{s}(I)}} \coloneq \{i \in \{1 ,\ldots, n\} \mid y_i \not \in I(Y)\}$, and $q \in Y$
has $y_i(q) \neq 0$ for all $i \in \mathrm{s}(I)$, then $Tq$ is dense in $Y$ and
consists of all points whose coordinates $y_i, i \in \mathrm{s}(I)$
{{are nonzero. In fact, in this case $T=T^0$.}} 

Assume that the vanishing ideal of $Y\coloneq gX$ is binomial for some $g \in
\GL_n$. Define $T=T^0$ as above.  Then $R\coloneq g^{-1} T g$
is a torus in  $G^0$ with a 
dense orbit on $X$. If we enlarge $R$ to a {\em maximal} torus in $G^0$, it still has a dense orbit on $X$. Since maximal tori in $G^0$
are conjugate by elements in $G^0$, it follows that {\em any} maximal
torus in $G^0$ has a dense orbit on $X$.

Conversely, let $S$ be any maximal torus in $G^0$. By conjugacy of maximal tori in $\GL_n$, there exists $h \in \GL_n$ such that $hSh^{-1} \subseteq (\CC^*)^n$.  Set $Z\coloneq hX$, and assume that $y_1,\ldots, y_{b}$ are identically zero on $Z$ and the remaining coordinates are not. Let $k=\dim X$. The points in $X$ with $S$-orbit of maximal dimension are exactly those $q \in X$ for which the last $n-b$ coordinates of $hq$ are nonzero. If these form a single dense $S$-orbit, then it follows that the vanishing ideal $I(Z)$ is binomial. Otherwise, by the previous  paragraph, $I(gX)$ is not binomial for any~$g\in\GL_n$. It therefore suffices to check whether a single $q \in X$ with the property that the last $n-b$ coordinates of $hq$ are nonzero has a dense $S$-orbit. Writing $\liea{s}$ for the Lie algebra of $S$, this is equivalent to $\liea{s}q$ having dimension  $k$.

Assume first that the correct answer is ``true''.  Then \Cref{alg:Binom} gives the correct output if
\begin{enumerate}
\item after the for-loop,  $\mathcal{C}$ spans the Lie algebra of a maximal torus
of $G^0$; 
\item $\phi$ is defined at $p$;
\item the rank of ${{J_{\phi}}}(p)$ is equal to $k$; and
\item $\phi(p)$ lies outside a union of $n-b$ hyperplanes.
\end{enumerate}
As in the proof of \Cref{prop:Symmetry}, item (2) holds for $p$ outside a
hypersurface of degree at most~$nd$, where $d$ is the bound on the degrees
of numerators and denominators from \Cref{alg:Symmetry}. Similarly,
items (3) and (4) hold for $p$ outside hypersurfaces of degrees (at most)~$2ndm$ and $n^2 d$ respectively. {{For items (2), (3), and (4) to hold simultaneously}}, $p$
should avoid a hypersurface of degree $nd+2ndm+n^2d=nd(1+2m+n)$. By the
Schwartz-Zippel Lemma, the choice of $N$ ensures that this happens with
probability at least $1-\epsilon$.

The call to \Cref{alg:Symmetry} gives the correct answer with 
probability at least $1-\epsilon$. Assuming $L$ is correct, it 
remains to show that item (1) holds with probability at least 
$1-\epsilon$.  By 
\Cref{lm:Cartan}, it suffices to ensure that $\ad(A )\colon \frakg \to
\frakg$ has a generalized eigenspace with eigenvalue $0$ of the minimal
possible dimension $s$. This requires the 
coefficient of $\lambda^s$ in the characteristic polynomial 
$\det(\lambda\,\id_\frakg - \ad(A))$ to be nonzero. This coefficient is a polynomial in the
entries of $A$ of degree $l-s \leq n^2$, where $l\coloneq\dim(\frakg)$. Hence
the choice of $M$ and the Schwartz-Zippel Lemma imply that (1) holds
with probability at least $1-\epsilon$, as desired. 

Assume next that the correct answer is ``false''. 
With probability at least $(1-\epsilon)^2$, the output of \Cref{alg:Symmetry} is 
correct and the for-loop produces a basis for the Lie algebra of a 
maximal torus $S$ in $\frakg_X$. Moreover, with probability $\geq 1-\epsilon$ the Jacobian $J_{\phi}(p)$ has rank $k$, and then the algorithm will return ``false'',  {{since the vectors $Dq$ for $D\in\mathcal{C}$ lie in the tangent space to the $S$-orbit of $q$, which has dimension less than $k$ since the correct answer is ``false'', so they cannot span a space of dimension $k$. The three probabilistic events are independent, giving a combined success probability of at least $(1-\epsilon)^3$.}}
\end{proof}

The following result completes the proof of \Cref{cor:Binomial}.
\begin{prop}\label{prop: polytime Algo2}
\Cref{alg:Binom} runs in polynomial time.
\end{prop}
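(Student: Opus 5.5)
We go through \Cref{alg:Binom} line by line and check that each step takes time polynomial in the bit length of the input and in $\log(1/\epsilon)$. As in the proof of \Cref{prop:running time for Algorithm 1}, the technical assumptions on the encoding give that $m$ and $n$ are bounded linearly by the input size. The bound $d$ can be chosen with bit length polynomial in the input. Hence $M=\lceil \epsilon^{-1}n^2\rceil$ and $N=\lceil\epsilon^{-1}nd(1+2m+n)\rceil$ have bit length polynomial in the input and in $\log(1/\epsilon)$.

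\textbf{Lines 2--5.} The call to \Cref{alg:Symmetry} is polynomial time by \Cref{prop:running time for Algorithm 1}. It returns a basis $\mathcal{B}$ of at most $n^2$ rational matrices, and their bit sizes are polynomial because they come from Gaussian elimination on a polynomial-size system. The random combination $A$ then has polynomially bounded entries.

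\textbf{Semisimple parts (lines 6 and 9).} This is the main obstacle. The Jordan decomposition over $\CC$ involves eigenvalues that may be irrational, but $A_s$ and $A_n$ have rational entries, since $A_s$ is a polynomial in $A$ with rational coefficients. I would compute $A_s$ without factoring the characteristic polynomial, as follows.
\begin{enumerate}
\item Compute the characteristic polynomial $\chi_A$ in polynomial time, for example by Berkowitz's algorithm or by Hessenberg reduction.
\item Compute its squarefree part $r=\chi_A/\gcd(\chi_A,\chi_A')$ by a polynomial-time gcd over $\QQ$.
\item Run the Newton-type iteration $B_{k+1}=B_k-r(B_k)\,r'(B_k)^{-1}$ starting from $B_0=A$.
\end{enumerate}
This is the standard algorithm of Chevalley, also used in computer algebra (de Graaf). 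It converges to $A_s$ after at most $\lceil\log_2 n\rceil$ steps, and the matrices $r'(B_k)$ are invertible. Each $B_k$ is a polynomial in $A$ of degree below $n$ with rational coefficients. The delicate point is bounding the bit size of these intermediates. I would handle this by reducing each $B_k$ modulo $\chi_A$ as an element of $\QQ[A]$, so that it is represented by a polynomial of degree $<n$. Its coefficients solve a linear system of Hermite-interpolation type over $\QQ[t]/(\chi_A)$, so their size is bounded via Cramer's rule by Hadamard-type bounds.

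Line 9 applies the same procedure to at most $n^2$ matrices $D\in\mathcal{C}$.

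\textbf{Line 7.} The centralizer $\{D\in L\mid [A_s,D]=0\}$ is the solution space of a homogeneous linear system in the at most $n^2$ coordinates of $D$ with respect to $\mathcal{B}$. It has at most $n^2$ equations, so Gaussian elimination solves it in polynomial time.

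\textbf{Lines 11--20.} Sampling $p$ is polynomial since $N$ has polynomial bit length. Evaluating $\phi(p)$ and $J_\phi(p)$ is polynomial as in \Cref{prop:running time for Algorithm 1}, using \cite[Theorem~2]{BaSt83} in the circuit encoding. The two ranks compared in the final test are ranks of rational matrices of polynomial size: $J$ has size $n\times m$, and the matrix with columns $Dq$ for $D\in\mathcal{C}$ has size $n\times|\mathcal{C}|$. Both ranks are computed by Gaussian elimination.

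Combining these steps yields a polynomial running time.
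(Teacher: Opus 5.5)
Your line-by-line structure is the same as the paper's. The paper treats every step other than the semisimple parts as routine linear algebra, and it settles lines 6 and 9 by citing Levelt's polynomial-time algorithm for the semisimple part of a rational matrix. The gap is in the self-contained argument you give instead of that citation, at exactly the point you flag as delicate.

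Your Hermite-interpolation/Cramer bound is valid for the \emph{final} output. There $A_s=s(A)$, where $\deg s<n$ and $s$ is determined by $s(\lambda_i)=\lambda_i$ and $s^{(j)}(\lambda_i)=0$ for $1\le j<a_i$, with $a_i$ the multiplicity of the eigenvalue $\lambda_i$. It is not valid for the Newton iterates $b_k$ reduced modulo $\chi_A$. Locally at $\lambda_i$ one has $b_k\equiv\lambda_i+\delta_{k,i}$, where $\delta_{k,i}\in(t-\lambda_i)^{2^k}$ is a correction that depends on the iteration. Writing $r(\lambda_i+x)=xH(x)$, one gets $\delta_{k+1}=\delta_k^2H'(\delta_k)/(H(\delta_k)+\delta_kH'(\delta_k))$. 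This correction is generally nonzero modulo $(t-\lambda_i)^{a_i}$ as long as $2^k<a_i$. So $b_k$ is not the solution of any fixed linear system with a simple right-hand side, and Cramer's rule does not apply to it.

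The only estimate your sketch supports is the naive one. Forming $r(b_k)$ with $\deg r$ up to $n$, and then inverting $r'(b_k)$ by Cramer, multiplies the bit size by a factor polynomial in $n$ at each step. After $\lceil\log_2 n\rceil$ steps this gives only an $n^{O(\log n)}$ bound, which is quasi-polynomial, not polynomial.

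The repair is short. One option is to cite a polynomial-time algorithm for the Jordan decomposition, as the paper does. Another is to drop the iteration altogether. Let $r$ be the squarefree part of $\chi_A$ and $g=\chi_A/r=\gcd(\chi_A,\chi_A')$; both lie in $\QQ[t]$ and are obtained by a gcd computation in polynomial time. The Hermite conditions above are then equivalent to $r\mid s-t$ and $g\mid s'$. This is a nonsingular system of $\deg r+\deg g=n$ linear equations over $\QQ$ in the $n$ coefficients of $s$. Cramer's rule and Hadamard's bound now legitimately show that $s$, and hence $A_s=s(A)$, has polynomial bit size and is computable in polynomial time.

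If you prefer to keep Newton's method, replace the $k$-th iterate by its remainder modulo $\gcd(\chi_A,r^{2^k})$, and carry out the next step modulo $\gcd(\chi_A,r^{2^{k+1}})$. By uniqueness of the Hensel lift, each reduced iterate is then the solution of the analogous system for that divisor of $\chi_A$. This keeps all intermediate sizes polynomial.

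The rest of your argument is fine and matches the paper: lines 2--5, the centralizer computation in line 7, and lines 11--20.
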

\begin{proof}
The analysis is similar to that
of \Cref{alg:Symmetry}. The additional 
steps in \Cref{alg:Binom} are: computing a random linear combination 
of $\mathcal{B}$; computing the semisimple part of a matrix with 
rational entries, which can be done in polynomial time by 
\cite{Levelt93}; computing a basis for the centralizer 
$\{D\in L\mid [A_s,D]=0\}$, which reduces to solving a linear 
system; and evaluating the rank condition on 
$\langle\{Dq\mid D\in\mathcal{C}\}\rangle$. All of these steps 
take polynomial time.
To complete the proof of \Cref{cor:Binomial}, it suffices to replace  $\epsilon$ by $\epsilon/3$ in the discussion of \Cref{prop: alg2 probability}.
\end{proof}

Now we share some remarks on \Cref{alg:Binom}.

\begin{re}\label{rem: find g making binomial}
If the output of \Cref{alg:Binom} is ``true'', one may ask how to compute $g\in\GL_n$ such that the ideal of $gX$ is binomial. In principle, it suffices to find $g$ such that $g\mathcal{C}g^{-1}$ consists of diagonal matrices, that is, to simultaneously diagonalize the elements of $\mathcal{C}$. Such~a~$g$ may not exist over $\QQ$, but if it does, it can be found in polynomial time via factorization of polynomials over $\QQ$ \cite{LLL82}, since the eigenvalues of the elements of $\mathcal{C}$ are roots of polynomials with rational coefficients. If $g$ does not exist over $\QQ$, one can compute its entries in a suitable splitting field. However, since the splitting field of a degree-$n$ polynomial may have degree $n!$ over $\QQ$, this is potentially exponential in $n$ and can no longer be done in 
polynomial time.
\end{re}

\begin{re}
There are practical settings in which the $g$ of \Cref{rem: find g making binomial} requires a field extension of $\QQ$. In \cite{SS05}, Sturmfels and Sullivant show that the ideals of group-based models for abelian groups are $\GL$-binomial and the required change of coordinate is the Fourier transform of the group. This coordinate change requires roots of unity that are typically not in $\QQ$. Incidentally, this is a case where the $\GL$-binomiality has been exploited
in a very successful manner to find equations for~$X$; see, e.g.,~\cite{MV19,Noren12}. 
\end{re}

\begin{re}
{{A simpler problem is to decide whether the vanishing ideal of $X$ 
itself is binomial, without need of a coordinate change. }}
Indeed, in this case, we may restrict our attention
to diagonal matrices {{$D=\diag(\lambda_1 ,\ldots, \lambda_n)$, which act on $\CC^n$ by 
coordinate-wise multiplication. One computes}}
a basis $C$ for the space of such $D$ mapping sufficiently many sampled
points $q$ into $T_q X$, then samples a new point $q'$ and tests 
whether $\dim\langle\{dq'\mid d\in\mathcal{C}\}\rangle = \dim X$.
An important 
difference is that we do linear algebra in $\QQ^n$ rather than $\QQ^{n^2}$!
\end{re}

\section{Derandomising} \label{sec:DeRandom}

\Cref{alg:Symmetry} and \Cref{alg:Binom} can be 
derandomized at 
the cost of losing polynomial running time.
Throughout this section we
assume that the input of $\phi$ is given in the standard encoding, and we
carry out symbolic computations with rational functions.

\begin{algorithm}[h]
\caption{Computes deterministically $\frakg_X$, where 
$X = \overline{\im(\phi)}$.} \label{alg:Symmetry2}
\begin{algorithmic}[1]
\Require $\phi=(f_1 ,\ldots, f_n) \in \QQ(x_1 ,\ldots, x_m)^n$;
\Ensure ${{\mathcal B}= \text{ basis for }}\frakg_\QQ$;
\State $p \gets (x_1 ,\ldots, x_n)$; \Comment{The $x_i$ are variables.}
\State $q \gets \phi(p)$;
\State $J \gets \left(\frac{\partial f_i}{\partial
x_j}(p)\right)_{ij} \in \QQ(x_1 ,\ldots, x_m)^{n \times m}$ ;
\State $k \coloneq \rk_{\QQ(x_1 ,\ldots, x_m)} J$;
\State $A \gets (a_{ij})_{i,j}$; \Comment{$A$ is an $n \times
n$-matrix of variables.}
\State $E \gets \{$all $(k+1)\times(k+1)$-subdeterminants\\ \quad
\quad \quad of the $n
\times (m+1)$-matrix $(J|A.q)$ that involve the last column$\}$;
\State $F \gets \{$all coefficients of all monomials in $x$\\ \quad
\quad \quad  in the 
numerators of the elements of $E\}$; \Comment{linear forms in the entries of
$A$}
\State ${{\mathcal B}} \gets $ a basis for the common zero set of the linear forms in $F$
\State \Return ${{\mathcal B}}$;
\end{algorithmic}
\end{algorithm}

\begin{prop}
\Cref{alg:Symmetry2} computes a basis for the symmetry algebra $\frakg_X$ of 
$X =\overline{\im(\phi)}$. 
\end{prop}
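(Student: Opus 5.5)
The plan is to reduce the statement to \Cref{cor: symmetry via jacobian}, by showing that the linear forms in $F$ cut out exactly the set of $A$ with $A.\phi(p) \in \im J_\phi(p)$ for all $p \in U_\phi$. First, $k = \rk_{\QQ(x)} J$ equals $\dim X$. The generic rank of $J_\phi$ equals $\dim X$; this is implicit in the proof of \Cref{cor: symmetry via jacobian}, since $U_\phi$ is nonempty and open, and the rank of $J_\phi(p)$ over a dense open set equals the rank over the function field. Consequently, for $p \in U_\phi$ the matrix $J_\phi(p)$ has rank exactly $k$. Hence, for a fixed $A \in \frakgl_n(\CC)$, we have $A.\phi(p) \in \im J_\phi(p)$ if and only if $\rk (J_\phi(p) \mid A.\phi(p)) \le k$. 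Since the first $m$ columns already have rank $k$, this holds if and only if all $(k+1)\times(k+1)$ minors of $(J_\phi(p)\mid A.\phi(p))$ vanish, and it suffices to consider those involving the last column, because the others vanish automatically.

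Second, I would interpret the elements of $E$. Each minor $e \in E$ is a rational function in $x$ that is linear in the entries $a_{ij}$, since the last column $A.q$ is linear in $A$ and the minor is linear in that column. Write $e = P_e(x,A)/Q_e(x)$ with $Q_e$ a product of powers of denominators $b_i$, independent of $A$, and $P_e$ polynomial in $x$ and linear in $A$. Expanding $P_e = \sum_\beta \ell_{e,\beta}(A)\, x^\beta$, the set $F$ consists of the linear forms $\ell_{e,\beta}$. The key equivalence is then: for a fixed $A\in\frakgl_n(\CC)$, all $\ell_{e,\beta}(A)=0$ if and only if each $P_e(\cdot,A)$ is the zero polynomial, if and only if $e(p)$ vanishes for all $p$ where $\phi$ is defined. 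By density this is equivalent to $e(p)=0$ for all $p\in U_\phi$. The delicate point is that $Q_e$ and the numerator might share factors, or the representation might not be reduced. This is harmless, because a polynomial in $x$ vanishes identically exactly when it vanishes on the dense open set where $Q_e\neq 0$.

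Combining the two steps, the common zero set of $F$ in $\frakgl_n(\CC)$ is $\{A \mid A.\phi(p)\in \im J_\phi(p) \ \forall p\in U_\phi\}$, which equals $\frakg_X$ by \Cref{cor: symmetry via jacobian}. Since all forms in $F$ have rational coefficients, as $\phi$ is defined over $\QQ$ and symbolic differentiation preserves this, the $\QQ$-solution space is $\frakg_{X,\QQ}$ and spans $\frakg_X$ over $\CC$. Thus the basis $\mathcal B$ computed over $\QQ$ is a basis of $\frakg_{X,\QQ}$ and of $\frakg_X$.

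I expect the main obstacle to be the first step: justifying carefully that the rank of $J$ over $\QQ(x_1,\ldots,x_m)$ equals $\dim X$, and that this rank is attained at every point of $U_\phi$. The latter holds by the definition of $U_\phi$. The former follows from the generic smoothness argument already used in \Cref{cor: symmetry via jacobian}, applied to a point $p$ with $\phi(p)$ smooth where $\im J_\phi(p)=T_{\phi(p)}X$ has dimension $\dim X$. A minor bookkeeping point is the typo $p \gets (x_1,\ldots,x_n)$ in the algorithm, which should read $(x_1,\ldots,x_m)$.
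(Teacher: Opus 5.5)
Your proof is correct and follows essentially the same route as the paper. The paper applies \Cref{thm: frakg via tangent spaces} at points with $T_qX=\im J_\phi(p)$, which is the content of \Cref{cor: symmetry via jacobian}; it then clears the $A$-independent denominators of the minors in $E$ and uses that a polynomial in $x$ vanishes on a dense set iff all its coefficients vanish. Your version also spells out details the paper leaves implicit: that $k=\rk_{\QQ(x)}J=\dim X$, that minors not involving the last column vanish automatically, and that the $\QQ$-solution space of $F$ spans $\frakg_X$ over $\CC$.
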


\begin{proof}
For $A \in \frakgl_n$ to be in $\frakg_X$, by \Cref{thm: frakg via tangent spaces}, it is necessary and sufficient that $A.q \in T_q X$
for $q$ in some open dense subset of $X$. In the algorithm, this is tested
for nonsingular $q$ of the form $\phi(p)$ with the additional requirements
that $\rk {{J_{\phi}}}(p)=\dim X $, so that $T_q X=\im({{J_{\phi}}}(p))$. Indeed, note that
the elements of $E$ are linear forms in the $a_{ij}$ with coefficients in
$\QQ(x_1 ,\ldots, x_n)$, and so we can write each $e \in E$ as $\frac{1}{h}
\widetilde{e}$, where $\widetilde{e}$ is a linear form in the $a_{ij}$
with coefficients in $\QQ[x_1 ,\ldots, x_n]$ and where $h$ is a polynomial in $\QQ[x_1 ,\ldots, x_n]$. Regrouping terms, we regard $\widetilde{e}$ as a polynomial in the $x_i$ with coefficients that are linear forms in the $a_{ij}$. For $A \in \frakgl_n$ to satisfy $A.q \in T_q X$ for all $q$ as above, it is necessary and sufficient that the elements $\widetilde{e}$ are all identically zero, which is equivalent to saying that the entries of $A$ satisfy the linear forms~in~$F$.
\end{proof}

\begin{algorithm}[h]
\caption{Tests deterministically whether $X=\overline{\im(\phi)}$ has $\GL$-binomial ideal.}\label{alg:Binom2}
\begin{algorithmic}[1]
\Require $\phi=(f_1 ,\ldots, f_n) \in \QQ(x_1 ,\ldots, x_m)^n$;
\Ensure $b=$ the answer to ``is $I$ binomial after a linear coordinate
change?'';
\State ${{\mathcal{B}}} \gets (A_1 ,\ldots, A_l)$, a basis of $L$, the output of
\Cref{alg:Symmetry2} 
\State $A \gets \sum_i y_i A_i$; \Comment{The $y_i$ are variables.}
\State $A_s \gets $ semisimple part of $A$; 
\State ${{\mathcal{C}}} \gets $ a basis over $\QQ(y_1 ,\ldots, y_l)$ of 
$\{D \in \QQ(y_1 ,\ldots, y_l) \otimes L \mid [A_s,D]=0\}$;
\For{$D \in {{\mathcal{C}}}$}
	\State $D \gets $ semisimple part of $D$;
\EndFor;
\State $p \gets (x_1 ,\ldots, x_n)$;
\State $q \gets \phi(p)$;
\State $J \gets (\frac{\partial f_i}{\partial x_j})(p)$;
\State $b \gets [$all columns of $J$ are in the
$\CC(x_1 ,\ldots, x_m,y_1 ,\ldots, y_l)$-span of $\{Dq \mid D \in
{{\mathcal{C}}}\}]$;
\State \Return $b$;
\end{algorithmic}
\end{algorithm}

\begin{prop}
\Cref{alg:Binom2} gives the correct output.
\end{prop}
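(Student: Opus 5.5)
We reduce the correctness of \Cref{alg:Binom2} to that of \Cref{alg:Binom}: we show that the symbolic computation coincides with the randomized one for a generic specialization of the variables $y_1,\ldots,y_l$ and $x_1,\ldots,x_m$. By the previous proposition, $\mathcal{B}$ is a basis of $\frakg_{X,\QQ}$, so $L\otimes\CC=\frakg_X$. The proof of \Cref{prop: alg2 probability} shows that $X$ is $\GL$-binomial if and only if, for a maximal torus $S$ of $G_X^0$ with Lie algebra $\mathfrak{s}$ and a general point $q\in X$, we have $\dim \mathfrak{s}q=\dim X$. Since $\mathfrak{s}q\subseteq T_qX=\im J_\phi(p)$ whenever $q=\phi(p)$ with $p\in U_\phi$, this holds if and only if the columns of $J_\phi(p)$ lie in $\mathfrak{s}q$. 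So it suffices to show two things: that $\mathcal{C}$, after specialization, spans such an $\mathfrak{s}$ for generic $y$; and that the span test over the function field $\CC(x,y)$ agrees with the test at a generic specialization.

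\textbf{Step 1: the torus.} Over $K=\QQ(y_1,\ldots,y_l)$, the matrix $A=\sum y_iA_i$ is the generic element of $L$. The characteristic polynomial of $\ad(A)$ on $K\otimes L$ has its lowest nonzero coefficient in degree $s$, the minimal dimension of a generalized $0$-eigenspace over $L\otimes \CC$. Hence for $y$ outside a proper Zariski closed set, $A(y)$ satisfies the hypothesis of \Cref{lm:Cartan}. The Jordan decomposition can be computed over $K$, for instance through the squarefree part of the minimal polynomial and Newton iteration as in \cite{Levelt93}, and it commutes with specialization on a dense open set of $y$. The same holds for the kernel of the linear map $D\mapsto[A_s,D]$ on $K\otimes L$: its dimension over $K$ equals its generic dimension, and a $K$-basis specializes to a basis on a dense open set. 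Finally, taking semisimple parts of the basis elements $D$ also commutes with generic specialization. By \Cref{lm:Cartan}, for generic $y\in\CC^l$ the specialized $\mathcal{C}$ spans $Z_s$, the Lie algebra of a maximal torus of $G_X^0$.

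\textbf{Step 2: the span test.} Whether the columns of $J$ lie in the span of $\{Dq\}$ over the field $\CC(x,y)$ is a rank condition: $\rk[\,Dq\,]=\rk[\,Dq\mid J\,]$. Over a field of rational functions, the rank equals the rank at a generic specialization. Choosing $(x,y)$ generic, so that Step 1 applies, $p=x\in U_\phi$, and $q=\phi(p)$ has $S$-orbit of maximal dimension, the symbolic test is therefore equivalent to $\im J_\phi(p)\subseteq\mathfrak{s}q$, which is the criterion above.

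\textbf{Main obstacle.} The delicate point is Step 1, namely that the semisimple part computed over $K$ (and the semisimple parts of the centralizer basis) specialize correctly. I would handle this by noting that $A_s=P(A)$ for a polynomial $P$ with coefficients in $K$. Specialization is then valid wherever the denominators of $P$ are nonzero and the number of distinct eigenvalues is preserved. Both conditions are nonempty Zariski open, because they hold at a generic point. Intersecting the finitely many dense open conditions finishes the argument.
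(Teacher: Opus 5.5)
Your proposal is correct and follows essentially the paper's route: you reduce to the maximal-torus orbit criterion from the proof of \Cref{prop: alg2 probability}, and you observe that the symbolic computations over $\QQ(y_1,\ldots,y_l)$ and $\CC(x,y)$ agree with the numerical ones at a generic specialization, which is the fact the paper invokes for the semisimple part of $A=\sum_i y_iA_i$. You give more detail than the paper (why $A_s=P(A)$ specializes, and why the centralizer basis and its semisimple parts do too), with one harmless slip: $T_qX=\im J_\phi(p)$ needs $q=\phi(p)$ to be smooth, not just $p\in U_\phi$, but your generic choice of $(x,y)$ already ensures this.
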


\begin{proof}
We need to check whether for sufficiently general rational numbers
$t_1 ,\ldots, t_l$, the linear combination $M=\sum_i t_i A_i$ has the
property that the semisimple part of the centralizer of~$M_s \in
\frakg_\QQ$, acting
on a sufficiently general point $q \in X$, yields a space containing
(and hence equal to) the tangent space $T_q X$. This is done in the
algorithm; we use here that for $t_1 ,\ldots, t_{{\ell}}$ sufficiently general,
the semisimple part over $\QQ(y_1 ,\ldots, y_l)$ of the {\em symbolic}
matrix $A$ specializes to the semisimple part of $M$ by substituting
$y_i \mapsto t_i$.
\end{proof}

\section{Computational experiments}
\label{sec:computational experiments}
A recurring theme in algebraic statistics is to determine when a statistical model is $\GL$-binomial, that is, it can be realized as the solution set of binomial equations, possibly after a linear change of variables. One then uses these binomials, together with monomial parametrizations of the variety and the geometry of the associated polytope, to address statistical questions, particularly in model selection and the study of likelihood geometry. In this spirit, we apply \Cref{alg:Binom} to two families of models defined by rational parametrizations: staged tree models and colored Gaussian graphical models.

\subsection*{Staged tree models}
Staged tree models are discrete models that represent dependency relations among events and are realized in a staged tree $\mathcal T$, that is, a rooted directed tree with colored nodes and labeled edges. Nodes represent events, and edge labels are conditional probabilities between events.  Let the stages or colors in the internal nodes of $\mathcal T$ be  $1 ,\ldots, S$. Nodes of the same color $s\in [S]$, must have the same number of outgoing edges $m_s$ and the same labels $\theta_{s,1} ,\ldots, \theta_{s,m_s}$ in their outgoing edges. {{Given that the labels represent probabilities, the last parameter is an expression in terms of the other parameters, that is, $\theta_{s,m_s}=1-(\theta_{s,1}+\ldots +\theta_{s,m_s-1})$. 
  So, the dimension of the model is $\prod_{i=1}^S(m_i-1)$. Let the leaves of $\mathcal T$ be $1 ,\ldots,  n$. The leaves of the tree represent the final events. Their  probability is the product of the edge labels along the path from the root to the leaf.  Let $\lambda(\ell)$ be the sequence of edges on the path from root to leaf~$\ell$,  and $\theta(e)$ denote the label for edge $e$. The depth of  $\mathcal T$  is  the maximum among all $\#\lambda(\ell)$. 
In algebraic terms, the staged tree model for $\mathcal T$ is the image of parametrization
 \begin{align}\label{eq:parametrization_for-staged_trees}
 \varphi\colon \CC^{m_1+\cdots+m_S-S+1} \dto  \CC^{n}, \quad (\theta_{s,i} \mid s=1 ,\ldots,  S, i=1,\ldots m_s-1)  \mapsto (\prod\limits_{e\in \lambda(\ell)}\theta(e))_{\ell=1}^n.
 \end{align}
 For illustration, consider the first binary staged tree in \Cref{table:binary_two_staged} which has two stages, stage one red and stage two green. The model has parametrization  given by polynomials:
\begin{gather*}
    \begin{aligned}
        f_1 &= \theta_{1,1} \theta_{2,1} \\
        f_4 &= (1-\theta_{1,1}) \theta_{1,1} \\
    \end{aligned} \quad 
    \begin{aligned}
       f_2 &= \theta_{1,1} (1- \theta_{2,1}) \theta_{2,1}\\
        f_5 &= (1-\theta_{1,1}) (1-\theta_{1,1}) \theta_{2,1} \\
    \end{aligned} \quad 
     \begin{aligned}
        f_3 &= \theta_{1,1} (1- \theta_{2,1}) (1-\theta_{2,1})\\
        f_6 &= (1-\theta_{1,1}) (1-\theta_{1,1}) (1-\theta_{2,1}).\\
    \end{aligned}
\end{gather*}
By \cite[Theorem~6.5]{gorgen2022staged}, all binary one-stage tree models, that is when $S=1$, are $\GL$-binomial given by Veronese embeddings. Ternary one-stage tree models of depth at most $3$ also are $\GL$-binomial by \cite[Theorem~6.7]{gorgen2022staged}, while in depth $4$, the caterpillar tree $\mathcal C_{3,4}$, the first tree in \Cref{table:caterpillar2}, is not $\GL$-binomial by \cite[Example~26]{Maraj23}. Binary staged trees are not all $\GL$-binomial either: an example with $4$ stages is given in \cite[Example~25]{Maraj23} and one with $10$ stages in \cite[Theorem~3.6]{nicklasson2023toric}.

To understand when $\GL$-binomiality fails, we undertake a systematic study of binary tree models in two stages. We find that non-$\GL$-binomial staged tree models already occur for trees of depth as low as~$3$.

\begin{ex}[Binary two-stage tree models]\label{ex:binary two-stage trees}
We tested $\GL$-binomiality for all $127$ two-stage binary tree models with depth at most $3$, and found that exactly $4$ are not $\GL$-binomial. These are listed in \Cref{table:binary_two_staged} together with the dimension of their Lie algebra. Labeling leaves in increasing order from top to bottom, the last $2$ models, with Lie algebras of dimension~$9$ and~$10$, lie in the hypersurface $y_2=y_3$ (since $f_2=f_3$). As noted in \Cref{re: linearly degenerate}, it is wise to reduce to a smaller ambient dimension in this case. Removing $f_3$ from $\phi$ reduces the symmetry Lie algebra to dimension $2$ for both models. Bases for all $4$ models are given in \Cref{table:binary_two_staged}. In each case, the maximal torus has dimension $1$, which is less than the dimension $2$ of the model.
\end{ex}
\begin{table}[!ht]
\centering
\begin{tabular}{ c | c | c }
staged tree model $X$ & $\dim\mathfrak{g}_X$ & basis for $\mathfrak{g}_{\Tilde{X}}$ \\ \hline
\parbox{2.5cm}{\includegraphics[width=2.5cm]{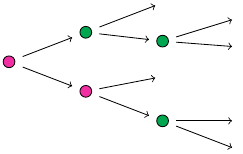}} & 2 &
\resizebox{!}{1cm}{$
\begin{bmatrix}
1 & 0 & 0 & 0 & 0 & 0 \\
-1 & 2 & 0 & 0 & 0 & 0 \\
0 & -2 & 0 & 0 & 0 & 0 \\
0 & 0 & 0 & 0 & 0 & 0 \\
0 & 0 & 0 & 0 & 1 & 0 \\
0 & 0 & 0 & 0 & -1 & 0 \\
\end{bmatrix}
\quad
\begin{bmatrix}
0 & 1 & 1 & 0 & 0 & 0 \\
0 & -1 & 1 & 0 & 0 & 0 \\
0 & 0 & -2 & 0 & 0 & 0 \\
0 & 0 & 0 & 0 & 0 & 0 \\
0 & 0 & 0 & 0 & 0 & 1 \\
0 & 0 & 0 & 0 & 0 & -1 \\
\end{bmatrix}$}
\\ & & \\
\parbox{2.5cm}{\includegraphics[width=2.5cm]{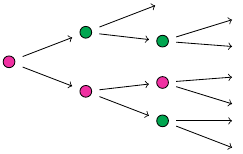}} & 2 &
\resizebox{!}{1cm}{$
\begin{bmatrix}
1 & 0 & 0 & 0 & 0 & 0 & 0 \\
-1 & 2 & 0 & 0 & 0 & 0 & 0 \\
0 & -2 & 0 & 0 & 0 & 0 & 0 \\
0 & 0 & 0 & 0 & 0 & 0 & 0 \\
0 & 0 & 0 & 0 & 0 & 0 & 0 \\
0 & 0 & 0 & 0 & 0 & 1 & 0 \\
0 & 0 & 0 & 0 & 0 & -1 & 0 \\
\end{bmatrix}
\quad
\begin{bmatrix}
0 & 1 & 1 & 0 & 0 & 0 & 0 \\
0 & -1 & 1 & 0 & 0 & 0 & 0 \\
0 & 0 & -2 & 0 & 0 & 0 & 0 \\
0 & 0 & 0 & 0 & 0 & 0 & 0 \\
0 & 0 & 0 & 0 & 0 & 0 & 0 \\
0 & 0 & 0 & 0 & 0 & 0 & 1 \\
0 & 0 & 0 & 0 & 0 & 0 & -1 \\
\end{bmatrix}$}
\\ & & \\
\parbox{2.5cm}{\includegraphics[width=2.5cm]{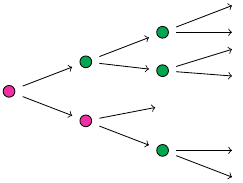}} & 9 &
\resizebox{!}{1cm}{$
\begin{bmatrix}
2 & 0 & 0 & 0 & 0 & 0 \\
-1 & 1 & 0 & 0 & 0 & 0 \\
0 & -2 & 0 & 0 & 0 & 0 \\
0 & 0 & 0 & 0 & 0 & 0 \\
0 & 0 & 0 & 0 & 1 & 0 \\
0 & 0 & 0 & 0 & -1 & 0 \\
\end{bmatrix}
\quad
\begin{bmatrix}
0 & 2 & 0 & 0 & 0 & 0 \\
0 & -1 & 1 & 0 & 0 & 0 \\
0 & 0 & -2 & 0 & 0 & 0 \\
0 & 0 & 0 & 0 & 0 & 0 \\
0 & 0 & 0 & 0 & 0 & 1 \\
0 & 0 & 0 & 0 & 0 & -1 \\
\end{bmatrix}$}
\\ & & \\
\parbox{2.5cm}{\includegraphics[width=2.5cm]{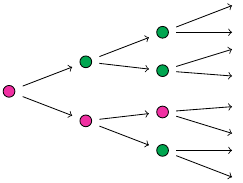}} & 10 &
\resizebox{!}{1cm}{$
\begin{bmatrix}
2 & 0 & 0 & 0 & 0 & 0 & 0 \\
-1 & 1 & 0 & 0 & 0 & 0 & 0 \\
0 & -2 & 0 & 0 & 0 & 0 & 0 \\
0 & 0 & 0 & 0 & 0 & 0 & 0 \\
0 & 0 & 0 & 0 & 0 & 0 & 0 \\
0 & 0 & 0 & 0 & 0 & 1 & 0 \\
0 & 0 & 0 & 0 & 0 & -1 & 0 \\
\end{bmatrix}
\quad
\begin{bmatrix}
0 & 2 & 0 & 0 & 0 & 0 & 0 \\
0 & -1 & 1 & 0 & 0 & 0 & 0 \\
0 & 0 & -2 & 0 & 0 & 0 & 0 \\
0 & 0 & 0 & 0 & 0 & 0 & 0 \\
0 & 0 & 0 & 0 & 0 & 0 & 0 \\
0 & 0 & 0 & 0 & 0 & 0 & 1 \\
0 & 0 & 0 & 0 & 0 & 0 & -1 \\
\end{bmatrix}$}
\\
\end{tabular}
\caption{All binary two-stage tree models with depth at most $3$ that are
not $\GL$-binomial, the dimension of their symmetry Lie algebras, and a basis for the symmetry  Lie algebra of the model in the smaller ambient dimension. These models have all dimension $2$ and maximal torus of dimension $1$.}
\label{table:binary_two_staged}
\end{table}

To better understand one-stage tree models, we also consider extensions of the caterpillar tree model $\mathcal{C}_{3,4}$, the first tree in \Cref{table:caterpillar2}, the smallest  one-stage tree model which is  not $\GL$-binomial. Since the set of all ternary one-stage trees is quite large, we restrict to extensions of $\mathcal{C}_{3,4}$ by at most $3$ internal nodes within the set of ternary one-stage trees of depth $4$.

\begin{ex}[Ternary one-stage trees of depth $4$]\label{ex:caterpillar}
Consider extensions of $\mathcal{C}_{3,4}$ to ternary one-stage trees obtained by iteratively changing up to $3$ leaves into internal nodes, with the condition that the new tree still has depth $4$. We learn that $4$ out of $6$ extensions by $1$ internal node are not $\GL$-binomial, $13$ out of $27$ extensions by $2$ internal nodes are not $\GL$-binomial, and $35$ out of $110$ extensions by $3$ internal nodes are not $\GL$-binomial. \Cref{table:caterpillar2} lists extensions of $\mathcal{C}_{3,4}$ by at most $2$ internal nodes whose model is not $\GL$-binomial, together with the dimensions of their symmetry Lie algebra. For the not $\GL$-binomial extensions by $3$ internal nodes the dimension of the symmetry Lie algebra ranges from $49$ to $78$. These dimensions are large because the models are linearly degenerate. In each case, the maximal torus has dimension $1$, which is less than the dimension $2$ of the model.
\end{ex}

\begin{table}[!ht]
\centering
\begin{tabular}{c  c | c  c | c c}
staged tree model $X$ & $\dim \mathfrak{g}_X$ & staged tree model $X$  & $\dim \mathfrak{g}_X$ & staged tree model $X$  & $\dim \mathfrak{g}_X $\\
\parbox{2.2cm}{\includegraphics[width=2.2cm]{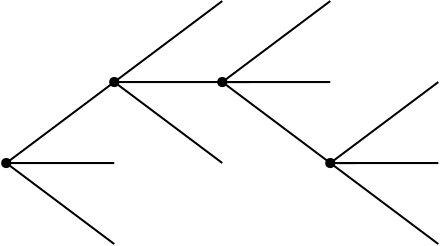}} & 1 &
\parbox{2.2cm}{\includegraphics[width=2.2cm]{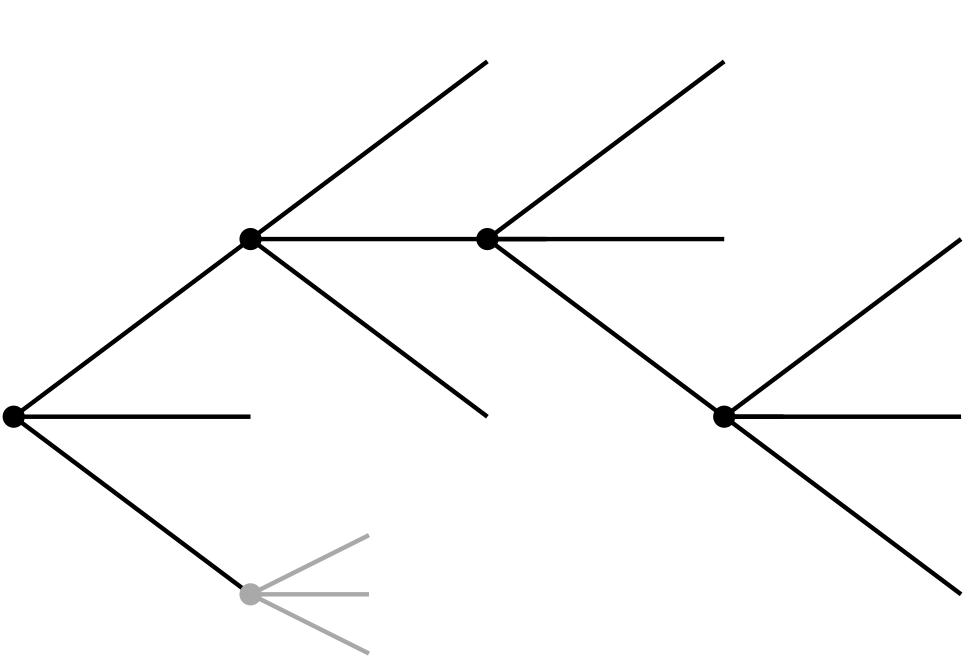}}  & 12 &
\parbox{2.2cm}{\includegraphics[width=2.2cm]{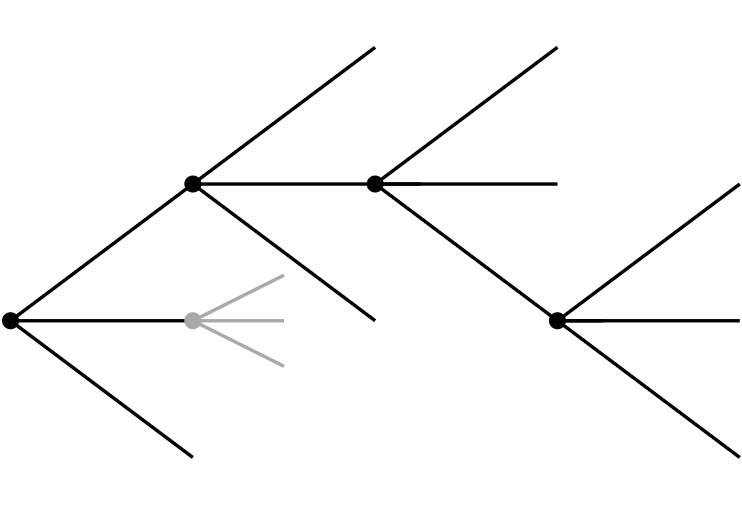}} & 12 \\
\parbox{2.2cm}{\includegraphics[width=2.2cm]{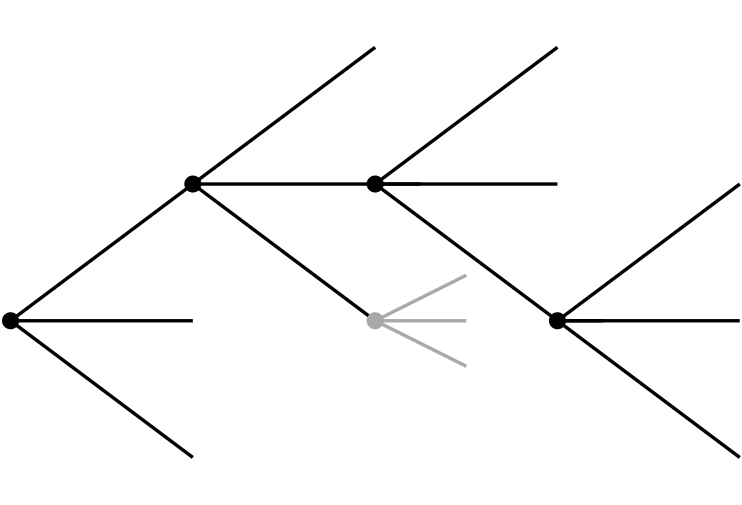}} &13 &
\parbox{2.2cm}{\includegraphics[width=2.2cm]{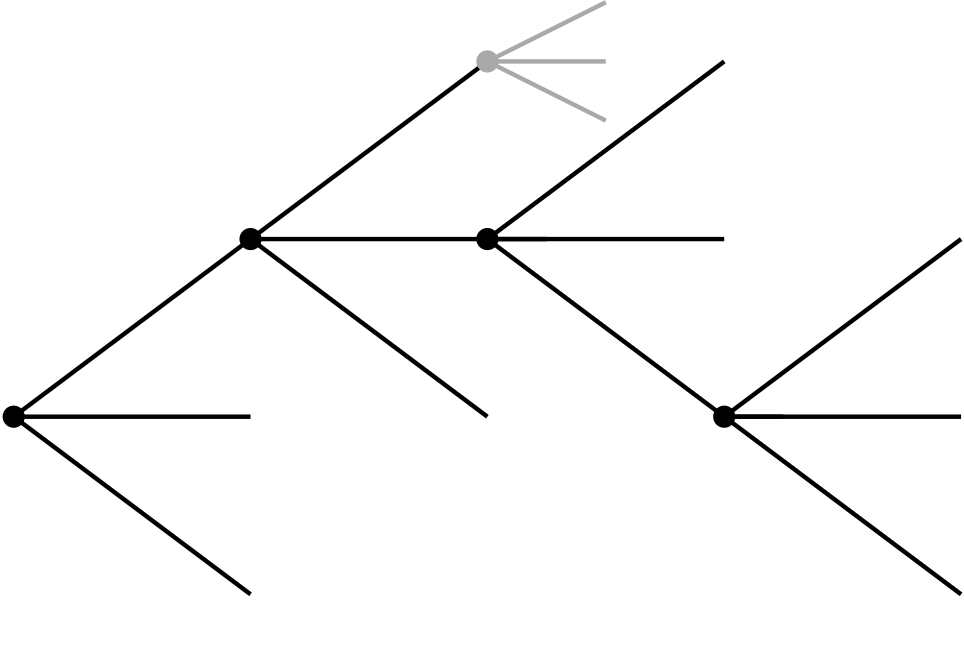}} & 13 &
\parbox{2.5cm}{\includegraphics[width=2.3cm]{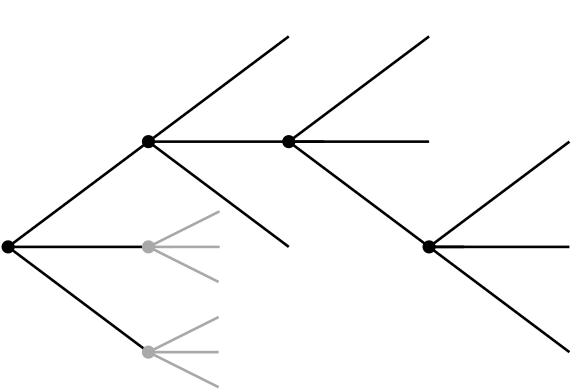}} & 40 \\
\parbox{2.5cm}{\includegraphics[width=2.3cm]{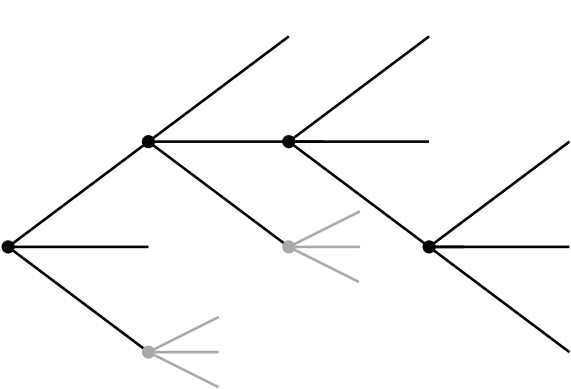}} & 28 & 
\parbox{2.5cm}{\includegraphics[width=2.3cm]{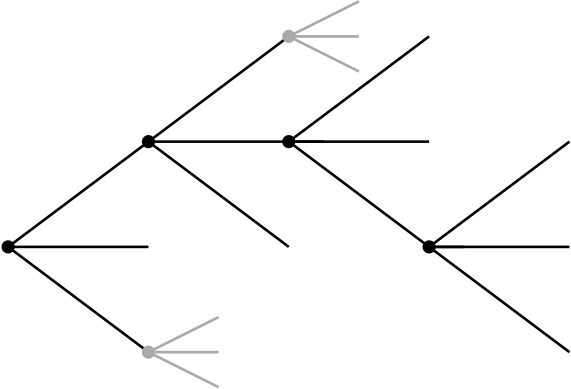}} & 28 &
\parbox{2.5cm}{\includegraphics[width=2.3cm]{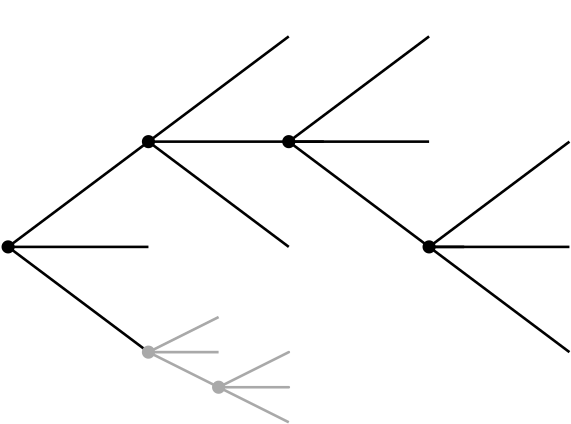}} & 16 \\
\parbox{2.5cm}{\includegraphics[width=2.3cm]{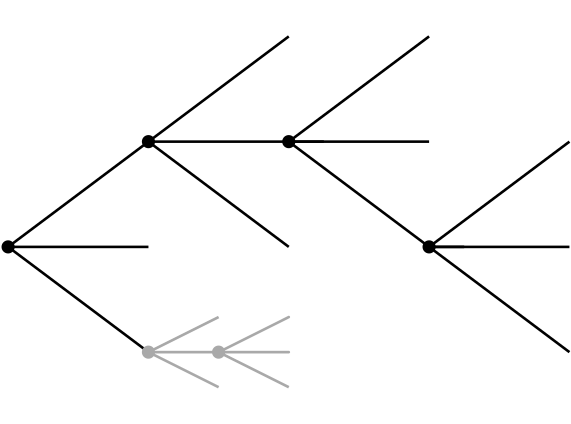}} & 28 &
\parbox{2.5cm}{\includegraphics[width=2.3cm]{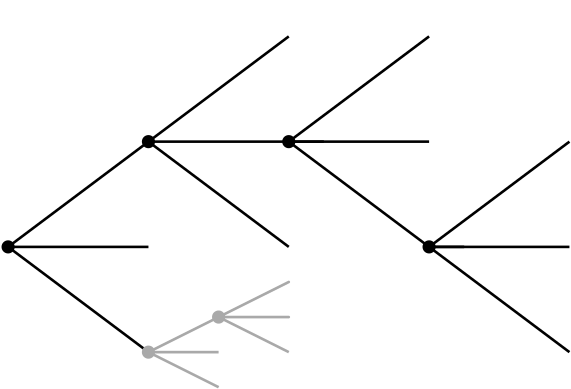}} & 28&
\parbox{2.5cm}{\includegraphics[width=2.3cm]{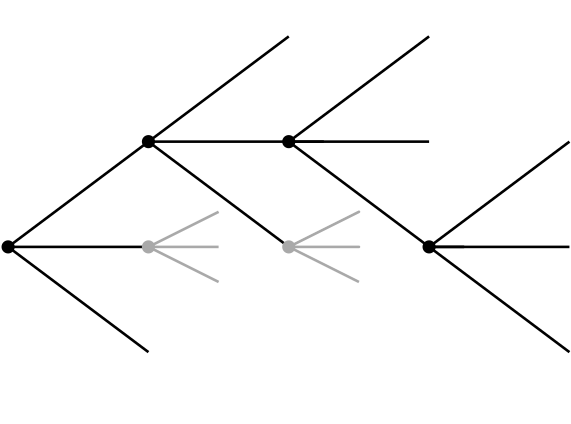}} & 28\\ 
\parbox{2.5cm}{\includegraphics[width=2.3cm]{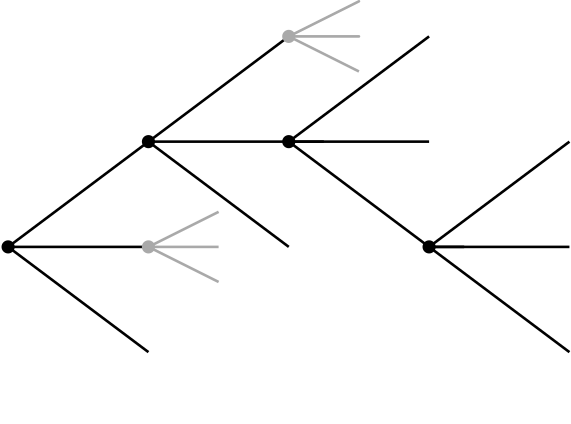}} & 28 & 
\parbox{2.5cm}{\includegraphics[width=2.3cm]{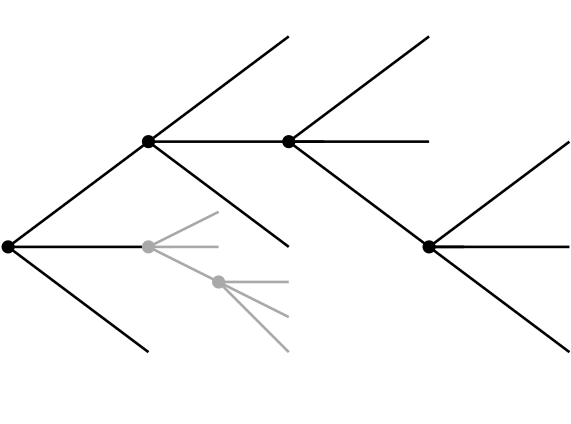}} & 28&
\parbox{2.5cm}{\includegraphics[width=2.3cm]{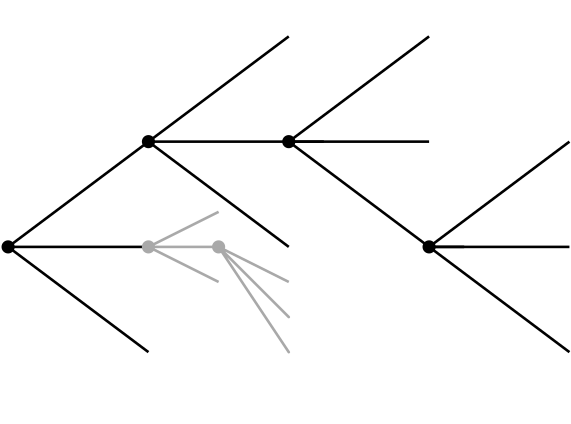}} & 28 \\
\parbox{2.5cm}{\includegraphics[width=2.3cm]{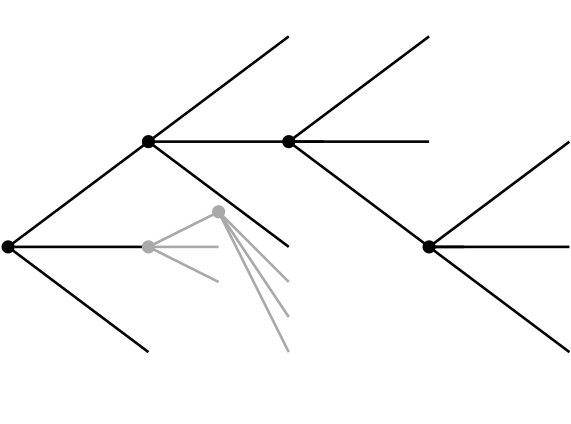}} & 40 &
\parbox{2.5cm}{\includegraphics[width=2.3cm]{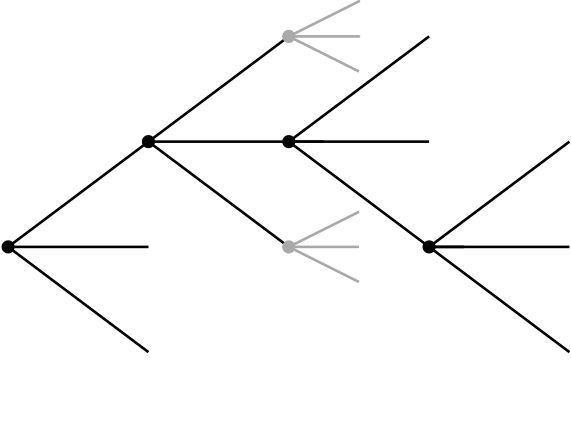}} & 41&
\parbox{2.5cm}{\includegraphics[width=2.3cm]{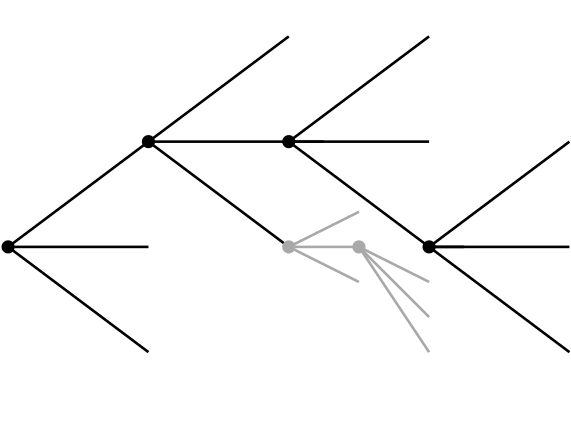}} & 41\\
\end{tabular}
\caption{All extensions of the caterpillar tree $\mathcal{C}_{3,4}$ by at most $2$ internal nodes that give non-$\GL$-binomial one-stage tree models, and the dimensions of their Lie algebras.
}
\label{table:caterpillar2}
\end{table}    
}}

\subsection*{Colored Gaussian graphical models}
Colored Gaussian graphical models are linear concentration models used to model dependency relations among i.i.d. Gaussian random variables, encoded in a colored graph. The nodes represent the variables, a missing edge implies that the associated entry in the concentration matrix $K$ must be zero, and nodes and edges that share a color imply that the corresponding entries in $K$ must be the same. These models generalize Gaussian graphical models, which are defined solely by the missing-edge conditions. 
More precisely, let $G$ be a colored graph on the vertex set $\{1 ,\ldots, n\}$, color set $\{1 ,\ldots,  m\}$ and edge set $E$, where edges and vertices do not share colors. Let  $\theta_1 ,\ldots, \theta_m$ be parameters associated to each color. Consider the $m$-dimensional linear space of $n\times n$ symmetric matrices $\mathcal{L}_G\cong \CC^m$ with constraints for $K \in \CC^{n \times n}$ given by 
\[
 K\in \mathcal{L}_G \quad \text{ if and only if } \quad
 \begin{array}{l}
  k_{ii} = \theta_{\mathrm{color}(i)} \text{ for } 1\leq i\leq n,\\
 k_{ij} = 0 \text{ if } i \neq j \text{ and } \{i,j\}\not \in E, \\
 k_{ij} = \theta_{\mathrm{color}(\{i,j\})} \text{ if } \{i,j\} \in E.
 \end{array}
\]
The reciprocal variety $\calL_G^{-1}$ of $\calL_G$ is the Zariski closure of the image of the rational map
\begin{align}\label{eq:colored graph}
\phi_G: \CC^{m} \dto \CC^{\binom{n+1}{2}},\ (\theta_i)_{i=1}^m \mapsto ((K(\theta)^{-1})_{ij})_{1\leq i\leq j\leq n}. 
\end{align}
This variety can  equivalently be given by the image of the map defined by the maximal minors of $K(\theta)$. The intersection of $\calL_G^{-1}$ with the cone of positive definite matrices $PD_n$ determines the set of covariance matrices of the colored Gaussian graphical model. We say that the colored Gaussian graphical model for $G$ is $\GL$-binomial if $\calL_G^{-1}$ is $\GL$-binomial. 

{{Reciprocal varieties of linear spaces of matrices have been studied in different contexts, both in algebraic statistics in \cite{cardwell2024toric,sturmfels2010multivariate,sturmfels2019brownian}, and in other areas of mathematics such as Schubert calculus \cite{MMMSV}, and the study of tensors \cite{ConMic21,gesmundo2025collineation}.

The key takeaway of the experiments in \cite[Section~6]{Kahle25} is that a Gaussian graphical model is $\GL$-binomial if and only if it is binomial in the original variables, which occurs if and only if the graph is a block graph. This does not hold for colored graphs: \cite[Theorem~1]{biaggi2025binomiality} shows that \emph{triangle-regular block graphs}, that is, block graphs that are vertex-regular, edge-regular, and vertex-triangle-regular, are exactly those graphs for which $I(\mathcal{L}^{-1}_{G})$ is binomial in the original variables. Furthermore, \cite{cardwell2024toric} shows that there are colored Gaussian graphical models on block graphs outside this class that are $\GL$-binomial.

It is natural to ask whether all colorings of block graphs give a binomial model, and whether certain colorings of non-block graphs can also give binomial models. To investigate this, we tested $\GL$-binomiality for all colorings of the complete graph $K_4$ and the cycle $C_4$ on $4$ nodes. We found that most colorings of block graphs give non-$\GL$-binomial models, while some colorings of non-block graphs do yield $\GL$-binomial models.

\begin{ex}[Colorings of complete graph $K_4$]\label{ex:K4} The reciprocal variety $\mathcal{L}^{-1}_{K_4}$ is the whole space of $n\times n$ symmetric matrices, and hence it is $\GL$-binomial. We computed the symmetry Lie algebras and tested $\GL$-binomiality for all $215$ colorings of $K_4$. Only $40$ of them yield binomial models, with dimension of the Lie algebra ranging from $5$ to $67$. In \Cref{fig:K4} we list the colorings of $K_4$ that are not triangle-regular, but whose model is  binomial, together with the dimension of the Lie algebra of each model after linear relations are removed. The models that are not $\GL$-binomial have Lie algebras of dimension ranging from $1$ to $52$.

\begin{table}[!ht]
\centering
\begin{minipage}{0.7\textwidth}
\centering
\resizebox{\textwidth}{!}{%
\begin{tabular}{ c c c c c c}
 \begin{minipage}{2.3cm}
\centering
\includegraphics[width=2.3cm]{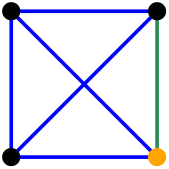}\\
$4 \mid 38 \mid 8$
\end{minipage} &
\begin{minipage}{2.3cm}
\centering
\includegraphics[width=2.3cm]{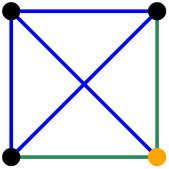}\\
$4 \mid 38 \mid 8$
\end{minipage} &
\begin{minipage}{2.3cm}
\centering
\includegraphics[width=2.3cm]{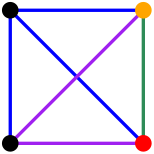}\\
$6 \mid 18 \mid 18$
\end{minipage} &
\begin{minipage}{2.3cm}
\centering
\includegraphics[width=2.3cm]{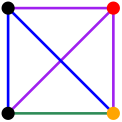}\\
$6 \mid 18 \mid 18$
\end{minipage} &
\begin{minipage}{2.3cm}
\centering
\includegraphics[width=2.3cm]{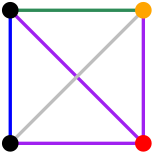}\\
$7 \mid 33 \mid 33$
\end{minipage} &
\begin{minipage}{2.3cm}
\centering
\includegraphics[width=2.3cm]{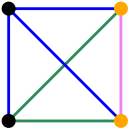}\\
$5 \mid 42 \mid 12$
\end{minipage}
\end{tabular}
}
(a)
\end{minipage}
\begin{minipage}{0.2\textwidth}
\centering
\includegraphics[width=2cm]{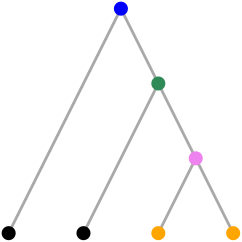}\\
(b)
\end{minipage}
\caption{In (a) are all not vertex regular, not edge regular and not vertex triangular colorings of $K_4$ with $\GL$-binomial Gaussian graphical model. The numbers below each colored graph are the dimension of the model, the dimension of its symmetry Lie algebra, and the dimension of the symmetry Lie algebra after removing linear relations. Graph $6$ is a BMT-derived graph with phylogenetic tree (b). }
\label{fig:K4}
\end{table}
\end{ex}

BMT-derived graphs are colored graphs whose coloring is induced by a phylogenetic tree with colored and zeroed nodes, introduced in \cite{cardwell2024toric}. More precisely, $k_{ii}$ is the parameter corresponding to the color of leaf $i$ in the colored tree, $k_{ij}=0$ when the internal node corresponding to the least common ancestor of $i$ and $j$ is labeled $0$, and $k_{ij}=k_{uv}$ when leaves~$i,j$ and $u,v$ share the same least common ancestor. The paper proves that all vertex-regular BMT-derived graphical models are $\GL$-binomial, and conjectures that the converse also holds. We provide a counterexample.

\begin{ex}[A counterexample to {\cite[Conjecture 22]{cardwell2024toric}}] \label{ex: conjecture}
The colored graph $G_6$ in \Cref{fig:K4}(a) is a BMT-derived graph, derived from the colored phylogenetic tree in \Cref{fig:K4}(b).
Let $X=\overline{\im \phi_{G_6}}$. We have  
\begin{align*}
  I(X)=\langle &\sigma_{33}-\sigma_{44}, \sigma_{23}-\sigma_{24},\sigma_{13}-\sigma_{14}, 2\sigma_{14}^2 -2\sigma_{24}^2-\sigma_{11}\sigma_{34}+\sigma_{22}\sigma_{34}-\sigma_{11}\sigma_{44}+\sigma_{22}\sigma_{44}, \\
  & \sigma_{12}\sigma_{14}-\sigma_{11}\sigma_{24}-2\sigma_{14}\sigma_{24}+\sigma_{12}\sigma_{34}+\sigma_{12}\sigma_{44}\rangle, 
\end{align*}
which under the linear change of variables
\begin{align*}
\begin{bmatrix}
\sigma_{11} \\
\sigma_{12} \\
\sigma_{22} \\
\sigma_{13} \\
\sigma_{23} \\
\sigma_{33} \\
\sigma_{14} \\
\sigma_{24} \\
\sigma_{34} \\
\sigma_{44} \\
\end{bmatrix}\mapsto\underbrace{\begin{bmatrix}
12 & 15 & 60 &0 & 0 & 0& 15 & 0 & 60 & 0 \\
0 & -9 & -60 &0 & 0 & 0& 9 & 0 & 60 & 0 \\
0 & 3 & 60&0 & 0 & 0 & 3 & 12 & 60 & 0 \\
-6 & -3 & 0 & -3 & 0 & 0 & 0 &0 & 0 & 0\\
0 & 3 & 0 & 0 & -3 & 0 & 0 &0 & 0 & 0 \\
6 & 12 & 40 & 0 & 0 & -3 & 0 &0 & 0 & 0\\
-6 & -3 & 0 &0 & 0 & 0& -3 & 0 & 0 & 0 \\
0 & 3 & 0 &0 & 0 & 0& -3 & 0 & 0 & 0 \\
0 & -12 & -40&0 & 0 & 0 & 0 & 0 & 0 & 3 \\
6 & 12 & 40 &0 & 0 & 0 & 0 & 0 & 0 & -3
\end{bmatrix}}_{g}
\begin{bmatrix}
\sigma_{11} \\
\sigma_{12} \\
\sigma_{22} \\
\sigma_{13} \\
\sigma_{23} \\
\sigma_{33} \\
\sigma_{14} \\
\sigma_{24} \\
\sigma_{34} \\
\sigma_{44} \\
\end{bmatrix}
\end{align*}
becomes the binomial ideal
\begin{align*}
 gI(X)=\langle &\sigma_{33}-\sigma_{44}, \sigma_{23}-\sigma_{24},\sigma_{13}-\sigma_{14}, \sigma_{22}\sigma_{14}-\sigma_{12}\sigma_{34},\sigma_{12}\sigma_{14}-\sigma_{11}\sigma_{24}\rangle.
\end{align*} 
\end{ex}

\begin{ex}[Colorings of the cycle $C_4$]\label{ex:C4} The Gaussian graphical model on $C_4$ is not $\GL$-binomial.  However,  $8$ out of $74$ colorings of $C_4$, shown in \Cref{fig:C4}, give colored Gaussian graphical models that are $\GL$-binomial. None of them are binomial in the original variables, i.e., in all cases a change of variables by a non-diagonal matrix $g$ is required such that $g\mathcal{L}^{-1}_G$ is generated by binomials.  
\end{ex}

\begin{table}[!ht]
\centering
\begin{tabular}{c c c c c c c c}
\begin{minipage}{1.62cm}
\centering
\includegraphics[width=1.62cm]{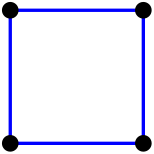}\\
$2 \mid 74 \mid 4$
\end{minipage} &
\begin{minipage}{1.62cm}
\centering
\includegraphics[width=1.62cm]{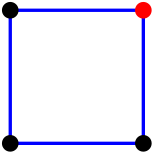}\\
$3 \mid 47 \mid 7$
\end{minipage} &
\begin{minipage}{1.62cm}
\centering
\includegraphics[width=1.62cm]{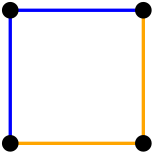}\\
$3 \mid 49 \mid 9$
\end{minipage} &
\begin{minipage}{1.62cm}
\centering
\includegraphics[width=1.62cm]{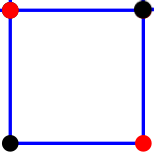}\\
$3 \mid 53 \mid 3$
\end{minipage} &
\begin{minipage}{1.62cm}
\centering
\includegraphics[width=1.62cm]{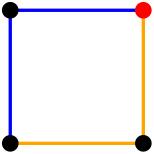}\\
$4 \mid 7 \mid 7$
\end{minipage} &
\begin{minipage}{1.62cm}
\centering
\includegraphics[width=1.62cm]{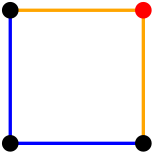}\\
$4 \mid 34 \mid 4$
\end{minipage} &
\begin{minipage}{1.62cm}
\centering
\includegraphics[width=1.62cm]{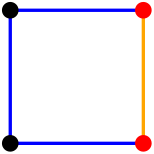}\\
$4 \mid 52 \mid 12$
\end{minipage} &
\begin{minipage}{1.62cm}
\centering
\includegraphics[width=1.62cm]{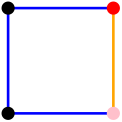}\\
$5 \mid 29 \mid 19$
\end{minipage}
\end{tabular}
\caption{All colorings of $C_4$ with $\GL$-binomial colored Gaussian graphical model. Below each graph is the dimension of the model, the dimension of its symmetry Lie algebra, and of the symmetry Lie algebra after removing linear relations.}
\label{fig:C4}
\end{table}

Finally, we record the fact that colored Gaussian graphical models with the minimal trivial coloring, that is, with $2$ colors, one for the vertices and one for the edges, are all $\GL$-binomial. To see this, we realize $\mathcal{L}^{-1}_G$ as the affine cone over the $(n-1)$-th collineation variety of a pencil of matrices, that is, the closure of the image of the rational map sending a matrix to its tuple of $(n-1)\times (n-1)$ minors. By \cite[Theorem~3.1]{gesmundo2025collineation}, collineation varieties of matrix pencils are rational normal curves whose degree is controlled by the intersection of the pencil with the varieties of matrices of bounded rank; in particular, they are $\GL$-binomial.
\begin{thm}\label{thm: two colors}
Let $G$ be a colored graph where all vertices share the same color and all edges share the same color. The reciprocal variety $\mathcal{L}^{-1}_G$ of the associated colored Gaussian graphical model is the affine cone over a rational normal curve. Its degree is bounded above by the number of distinct eigenvalues of the adjacency matrix of $G$.
\end{thm}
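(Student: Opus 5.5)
With two colors, $K(\theta) = \theta_1 \Id_n + \theta_2 A_G$, where $A_G$ is the adjacency matrix of $G$. The parameter space $\mathcal{L}_G$ is therefore a pencil of symmetric matrices. The plan is to compute the reciprocal variety explicitly by diagonalizing $A_G$, rather than invoking \cite[Theorem~3.1]{gesmundo2025collineation} as a black box. The direct computation gives both the rational normal curve and the degree bound.

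Since $A_G$ is real symmetric, write $A_G = Q \Lambda Q^T$ with $Q$ orthogonal and $\Lambda = \diag(\lambda_1,\ldots,\lambda_n)$. Let $\mu_1,\ldots,\mu_r$ be the distinct eigenvalues and $P_1,\ldots,P_r$ the orthogonal projections onto the eigenspaces, so that $A_G = \sum_k \mu_k P_k$ and $\Id_n = \sum_k P_k$. Wherever $K(\theta)$ is invertible,
\[
K(\theta)^{-1} = \sum_{k=1}^r \frac{1}{\theta_1 + \mu_k \theta_2}\, P_k .
\]
Hence $\phi_G$ factors as the map $\theta \mapsto \bigl((\theta_1+\mu_k\theta_2)^{-1}\bigr)_{k=1}^r \in \CC^r$, followed by the injective linear map $\CC^r \to \mathrm{Sym}_n$ given by $(c_k) \mapsto \sum_k c_k P_k$. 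It is injective because the $P_k$ are nonzero with orthogonal images, hence linearly independent. Since a linear isomorphism onto its image preserves the property of being a cone over a rational normal curve (and preserves degree), it suffices to study the closure $Y \subseteq \CC^r$ of the image of $\theta \mapsto \bigl((\theta_1+\mu_k\theta_2)^{-1}\bigr)_k$.

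To identify $Y$, multiply through by $\prod_j(\theta_1+\mu_j\theta_2)$; this is a rescaling along the cone and does not change the closure of a homogeneous image. The image of $Y$ then coincides with that of
\[
\theta \mapsto \Bigl(\prod_{j\neq k}(\theta_1+\mu_j\theta_2)\Bigr)_{k=1}^r .
\]
These are $r$ binary forms of degree $r-1$. They are linearly independent because of the Lagrange interpolation property: the $k$-th form vanishes at $[\theta_1:\theta_2]=[-\mu_j:1]$ for every $j\neq k$ but not at $j=k$, using that the $\mu_j$ are distinct. Thus they form a basis of the space of binary forms of degree $r-1$, and the map is a linear change of coordinates composed with the Veronese map $\theta\mapsto(\theta_1^{r-1},\theta_1^{r-2}\theta_2,\ldots,\theta_2^{r-1})$. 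Its image closure is therefore the affine cone over a rational normal curve of degree $r-1$. If $r=1$ (edgeless graph) this is a line, the cone over a degree-$0$ curve. The degree of $\mathcal{L}_G^{-1}$ is then $r-1 \le r$, the number of distinct eigenvalues of $A_G$, as claimed.

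The main point needing care is this last identification. One must check that $\theta\mapsto\prod_j(\theta_1+\mu_j\theta_2)\cdot v(\theta)$ has the same image closure as $v$. This holds because both images are cones (each is invariant under scaling, since the forms are homogeneous of degree $-1$ and $r-1$) and they define the same projective map. One must also check that the invertibility locus of $K(\theta)$ is dense, which holds since $\det K(\theta) = \prod_k(\theta_1+\mu_k\theta_2)^{\dim\ker(A_G-\mu_k)}\not\equiv 0$. The theorem states an upper bound, presumably to cover normalization conventions; the computation gives degree exactly $r-1$. This is consistent with the collineation description of $\mathcal{L}_G^{-1}$ via the $(n-1)\times(n-1)$ minors.
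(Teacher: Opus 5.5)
Your proof is correct, and it takes a genuinely different route from the paper.

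\textbf{The paper's route.} The paper identifies $\mathcal{L}_G^{-1}$ with the affine cone over the collineation variety $\mathscr{C}^{(n-1)}(K_G)$ of the pencil $\theta_1\Id_n+\theta_2A_G$. It invokes \cite[Theorem~3.1]{gesmundo2025collineation} as a black box to get that this is a rational normal curve. It then bounds the degree by subtracting, from $n$, the intersection multiplicities of $\PP\mathcal{L}_G$ with the rank-$\leq n-2$ locus. Each multiplicity is estimated from below by $\mathrm{amult}(\lambda)-1$, which yields $\deg\leq s$.

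\textbf{Your route.} You use the spectral decomposition $A_G=\sum_k\mu_kP_k$ to write $K(\theta)^{-1}=\sum_k(\theta_1+\mu_k\theta_2)^{-1}P_k$ explicitly. You then clear the common denominator and recognize $\prod_{j\neq k}(\theta_1+\mu_j\theta_2)$ as a Lagrange basis of the binary forms of degree $r-1$. This makes $\mathcal{L}_G^{-1}$ visibly an injective linear image of the Veronese cone.

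\textbf{Comparison.} The paper's argument buys brevity and a conceptual link to collineation varieties of pencils. Yours buys several things:
\begin{itemize}
\item it is self-contained and avoids the intersection-multiplicity bookkeeping;
\item it exhibits the linear span $\langle P_1,\ldots,P_r\rangle$ of $\mathcal{L}_G^{-1}$;
\item it gives the exact degree $r-1$ whenever $G$ has at least one edge.
\end{itemize}
The last point is strictly below the stated bound, and the paper's inequality does not detect this. For example, for $C_4$ the eigenvalues are $2,0,0,-2$, and $\mathcal{L}_{C_4}^{-1}$ is a quadric cone in a $3$-dimensional space, not a cubic. So your closing remark is right: the computation gives an equality, and the theorem's inequality is simply not sharp.

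\textbf{A small slip.} In the edgeless case $r=1$, the variety is the line $\CC\cdot\Id_n$. This has degree $1$, not $0$, since it is the cone over a point and a point has degree $1$. The bound $\leq r$ still holds there. Separating this case is genuinely necessary: your scaling-invariance argument for the image of the forms $\prod_{j\neq k}(\theta_1+\mu_j\theta_2)$ uses that they have positive degree $r-1$, which fails when $r=1$.
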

\begin{proof}
Let $\theta_1$ be the parameter for the vertex color and $\theta_2$ the parameter for the edge color, and let $n$ be the number of vertices of $G$. Consider the matrix pencil parametrized by
\[
K_G\colon (\theta_1,\theta_2) \mapsto \theta_1 \Id_{n} + \theta_2 A_G,
\]
where $A_G$ is the adjacency matrix of $G$. Following the definitions of \cite{gesmundo2025collineation}, the reciprocal variety of $\mathcal{L}_G$ is the cone over the $(n-1)$-th collineation variety $\mathscr{C}^{(n-1)}(K_G)\subseteq \PP\CC^{^{\binom{n+1}{2}}}$.  So \cite[Theorem~3.1]{gesmundo2025collineation} guarantees that $\mathscr{C}^{(n-1)}(K_G)$ is a rational normal curve, and therefore~$\mathcal{L}_G^{-1}$ is a cone over a rational normal curve.

The degree of $\mathscr{C}^{(n-1)}(K_G)$ is $n$ minus the sum of the intersection multiplicities between the projective line $\mathbb{P} \mathcal{L}_G \subseteq \mathbb{P} (\mathbb{C}^n \otimes \mathbb{C}^n)$ and the variety $\sigma_{n-2}$ of symmetric matrices of size $n$ and rank at most $n-2$. Notice that $K(\lambda,1)$ is not full rank if and only if $\lambda$ is an eigenvalue of $A_G$. Moreover, the intersection multiplicity $\mathrm{imult}_{K(\lambda,1)} ( \mathbb{P} \mathcal{L}_G , \sigma_{n-2})$ at $(\lambda,1)$ is bounded below by $\mathrm{amult}(\lambda) -1$, where $\mathrm{amult}(\lambda)$ is the algebraic multiplicity of $\lambda$ as an eigenvalue of $A_G$.

If $\lambda_1 ,\ldots, \lambda_s$ are the distinct eigenvalues of $A_G$, we deduce that
\begin{align*}
\deg \mathcal{L}_G^{-1} =& \deg \mathscr{C}^{(n-1)}(K_G) = n - \textstyle \sum\limits_{i=1}^s \mathrm{imult}_{K(\lambda_i,1)} ( \mathbb{P} \mathcal{L}_G , \sigma_{n-2}) \leq n - \textstyle \sum\limits_{i=1}^s(\mathrm{amult}(\lambda_i) - 1) = s,
\end{align*}
where in the last equality we use that the sum of algebraic multiplicities equals $n$.
\end{proof}

\Cref{thm: two colors} implies in particular that all colored Gaussian graphical models on cycles with~$2$ colors are $\GL$-binomial. This also occurs for other colorings, as shown in \Cref{ex:C4}. It would be interesting to use \Cref{alg:Binom} to classify all colorings of the cycle $C_n$ for $n\geq 5$ that yield $\GL$-binomial models. Since most of these models are linearly degenerate, the analysis can be simplified by using results on their linear relations \cite{davies2021coloured,gobel2025linear}.
}}

\section{Symmetry Lie algebras of curves} \label{sec:Curves}

In this section, we use \Cref{thm: frakg via tangent spaces} and material from \Cref{sec:Binomial} to characterize when a rational curve $X$ given by a parametrization has $\frakg_X \neq \{0\}$.

We first establish some terminology. The {\em leading coefficient}
of a nonzero rational function~{{$r=a/b$ for polynomials $a,b \in \CC[t]$}}
is by definition the leading coefficient of $a$ divided by that of $b$. If
the leading coefficient of $r$ equals $1$, we call $r$ {\em monic}. The
{\em degree} of $r$ equals $\deg(a)-\deg(b) \in \ZZ$, the order of the
pole at infinity, and we set $\deg(0) \coloneq -\infty$. These notions behave
similarly to the case of polynomials. In particular, if $r,s$ are rational
functions of different degrees, then $\deg(r+s)=\max\{\deg(r),\deg(s)\}$
and if $r,s$ are monic rational functions of the same degree, then
$\deg(r-s)<\max\{\deg(r),\deg(s)\}$. Also, if $r$ is a monic rational
function of degree $d$, then its derivative $r'$ has degree $d-1$ and
leading coefficient $d$, unless $d=0$, in which case $\deg(r')$ is a
value between $-\infty$ (if $r$ is a constant) and $d-2$. 

If $X$ is parametrized by Laurent monomials $t^{e_i}$, then $\frakg_X$ is easy to~characterize.
{{
\begin{lm}\label{lem:monomial curves}
Let $\phi = (t^{e_1} ,\ldots, t^{e_n} )\colon  \CC \dto \CC^n$ be a rational map with $e_1 < e_2 < \cdots < e_n$ and $\gcd(e_1 ,\ldots, e_n) = 1$, and let  $X = \overline{\im(\phi)}$. Let
 $D \coloneq \diag(e_1 ,\ldots, e_n)$,  
 \[
    B \coloneq \begin{bmatrix}
        0 & 0 & 0 & \dots & 0 \\
        1 & 0 & 0 & \dots & 0 \\
        0 & 2 & 0 & \dots & 0 \\
        \vdots & \ddots & \ddots & \ddots & \vdots \\
        0 & \cdots & 0 & n-1 & 0
    \end{bmatrix}
    \quad \text{and} \quad
    C \coloneq \begin{bmatrix}
        0 & n-1  & 0 & \dots & 0 \\
        0 & 0 & n-2 & \dots & 0 \\
          \vdots & \ddots & \ddots & \ddots & \vdots \\
        0 & 0 & 0 & \dots & 1 \\
        0 & 0  & 0 &\cdots  & 0
    \end{bmatrix}.
\]
 Then $\mathfrak{g}_X = \langle D \rangle$, unless one of the following holds:
\begin{itemize}
    \item $(e_1, \ldots, e_n) = (0, 1, \ldots, n-1)$, in which case 
          $\mathfrak{g}_X = \langle D, B \rangle$, or
    \item $(e_1, \ldots, e_n) = (1-n, 2-n, \ldots, 0)$, in which case 
          $\mathfrak{g}_X = \langle D, C \rangle$.
\end{itemize}
\end{lm}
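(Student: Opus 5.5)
By \Cref{cor: symmetry via jacobian}, a matrix $A=(a_{ij})\in\frakgl_n$ lies in $\frakg_X$ if and only if $A.\phi(t)$ is a multiple of $\phi'(t)$ as a vector of rational functions. Indeed, $\dim X=1$ and the Jacobian is the single column $\phi'(t)=(e_it^{e_i-1})_i$, which is nonzero for generic $t$. Since $A$ is constant, the condition $A\phi(t)=c(t)\phi'(t)$ for some rational function $c$ is equivalent to vanishing of the $2\times 2$ minors $(A\phi)_i\,e_jt^{e_j-1}-(A\phi)_j\,e_it^{e_i-1}$.

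I will use a cleaner reformulation. The $i$-th entry of $t\phi'(t)$ is $e_it^{e_i}$, so the condition reads $\sum_j a_{ij}t^{e_j}=c(t)e_it^{e_i-1}$ for all $i$, with a common $c$. The plan is a monomial-degree bookkeeping argument.

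\textbf{Step 1 ($D$ always works).} We have $D\phi=t\phi'$, so $c=t$.

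\textbf{Step 2 (reduce to $c$ a Laurent monomial times $t$).} Pick an index $i$ with $e_i\neq 0$; since $\gcd=1$ and there are at least two distinct exponents, such an $i$ exists. Row $i$ gives $c(t)=\frac{1}{e_i}\sum_j a_{ij}t^{e_j-e_i+1}$, a Laurent polynomial whose exponents lie in $\{e_j-e_i+1\}$. Write $c(t)=t\sum_k \gamma_k t^k$. Row $i'$ then requires $\sum_j a_{i'j}t^{e_j}=e_{i'}\sum_k\gamma_k t^{e_{i'}+k}$. So for each $k$ with $\gamma_k\neq0$ and each $i'$ with $e_{i'}\neq 0$, we need $e_{i'}+k\in E:=\{e_1,\ldots,e_n\}$. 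Conversely, given such shifts, $A$ is determined as a sum of $\gamma_k$ times the matrix sending $t^{e_j}$-coordinates appropriately. The solution space is therefore the span of the matrices $A_k$, $k\in K$, where
\[
K=\{k\in\ZZ : e+k\in E \text{ for all } e\in E\setminus\{0\}\},
\]
and $A_k$ has entry $e_i$ at position $(i,j)$ whenever $e_j=e_i+k$. We have $A_0=D$, and the $A_k$ are linearly independent. Hence $\dim\frakg_X=|K|$.

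\textbf{Step 3 (determine $K$).} This is the main combinatorial point. Let $E^*=E\setminus\{0\}$, and suppose $k>0$ lies in $K$, so that $E^*+k\subseteq E$. Take $e=\max E^*$. Then $e+k\in E$ exceeds every element of $E^*$, so $e+k=0$, forcing $E^*\subseteq\ZZ_{<0}$ and $\max E=0$. Iterating from the top, every element of $E^*$ except the largest maps to a larger element of $E^*$. A finite set closed under $+k$ except at its top, which goes to $0$, must be $\{-k,-2k,\ldots,-(n-1)k\}$ together with $0$. By $\gcd=1$ we get $k=1$ and $E=\{1-n,\ldots,0\}$, and the same argument shows $K=\{0,1\}$. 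Symmetrically, $k<0$ forces $E=\{0,1,\ldots,n-1\}$ and $K=\{0,-1\}$. Finally, check that $A_{-1}$ in the first exceptional case has entries $e_i=i-1$ at positions $(i,i-1)$, which is $B$, and that $A_1$ in the second case matches $C$: its entries are $|e_i|=n-i$ at positions $(i,i+1)$, up to sign. The points needing care are the handling of $e_i=0$ rows in Step 2 and the closure argument in Step 3; I expect Step 3 to be the main obstacle.
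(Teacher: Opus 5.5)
Your proof is correct. It uses the same mechanism as the paper's: \Cref{cor: symmetry via jacobian} turns membership in $\frakg_X$ into an identity $A\phi=c(t)\phi'$, and comparing monomial supports row by row forces $c$ to be a combination of powers $t^{k+1}$ with $e_i+k\in E$ for every $e_i\neq 0$.

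The organization differs, though. The paper first handles the non-exceptional case. It asserts that the supports intersect in a single monomial, then excludes $m\neq 1$ using $e_n+(m-1)\notin E$ and $e_1+(m-1)\notin E$. The exceptional cases are treated separately:
\begin{itemize}
\item for $(0,\ldots,n-1)$ it uses $B.\phi=\phi'$, a degree bound $\deg c\le 1$, and the fact that the vectors $\phi(t)$ span $\CC^n$;
\item the case $(1-n,\ldots,0)$ is transported from that one via $t\mapsto t^{-1}$ and reversal of coordinates.
\end{itemize}
You instead write the full solution space as $\bigoplus_{k\in K}\CC A_k$, so that $\dim\frakg_X=|K|$. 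The inclusion $\supseteq$ is just $A_k\phi=t^{k+1}\phi'$, which you should state explicitly. You then classify $K$, which yields all three cases at once and identifies $A_{-1}=B$ and $A_1=-C$ directly.

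What your route gains is rigor exactly where the paper is quick.
\begin{itemize}
\item Its appeal to the sequence ``not being consecutive'' is loose: $(1,2,3)$ and $(-1,0,1)$ are consecutive yet non-exceptional.
\item Its tests $e_n+(m-1)\notin E$ and $e_1+(m-1)\notin E$ say nothing when $e_n=0$ (resp.\ $e_1=0$), because a row with $e_i=0$ imposes no condition on $c$.
\end{itemize}
Your Step 3 handles precisely these points. A nonzero shift forces $0$ to be an extreme element of $E$ and $E\setminus\{0\}$ to be an arithmetic progression of step $|k|$ abutting $0$, so $|k|=1$ by the gcd condition. The paper's route is shorter on the exceptional cases, since the $t\mapsto t^{-1}$ symmetry means only one of them has to be computed by hand.
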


\begin{proof}
We use \Cref{cor: symmetry via jacobian}. Let 
$\phi' = (e_1 t^{e_1-1},\ldots,e_n t^{e_n-1})^T$ 
be the vector of derivatives of~$\phi$. Observe that 
$D.\phi = t\phi'$, so $D \in \frakg_X$ for any choice of 
$e_1,\ldots,e_n$.

Let $(e_1,\ldots,e_n) \neq (0,\ldots,n-1),(-n+1,\ldots,0)$. 
Any $A\in\frakg_X$ satisfies $A.\phi = c(t)\phi'$. The $i$-th row reads:
$
\sum_j A_{ij}\,t^{e_j} = c(t)e_i\,t^{e_i-1}.
$
The left side is supported on $\{t^{e_j}\}$, so $c(t)$ must be
supported on $\{t^{e_j-e_i+1}\}$ for every $i$ with $e_i\neq 0$.
Since $(e_1,\ldots,e_n)$ is not a consecutive integer sequence,
this intersection over all such $i$ contains exactly one monomial,
forcing $c(t) = \gamma t^m$. Substituting back, $A_{ij}\neq 0$
requires $e_j = e_i+(m-1)$. But $m>1$ gives
$e_n+(m-1)\notin\{e_1,\ldots,e_n\}$ and $m<1$ gives
$e_1+(m-1)\notin\{e_1,\ldots,e_n\}$, so $m=1$ and
$A = \gamma D$.

If $(e_1 ,\ldots, e_n) = (0 ,\ldots, n-1)$ then $B.\phi = \phi'$ showing that $B \in \frakg_X$ as well. We will show that $ \langle B,D \rangle= \frakg_X$. Now,  a  matrix $A\in \frakg_X$ satisfies $A.\phi=c(t)\phi'=c(t)B.\phi$. Extending this expression, since the entries
of $A.\phi$ are polynomials of degree at most $n-1$, comparing
degrees forces $\deg(c) \leq 1$, so $c(t) = \alpha + \beta t$. So, 
\[A.\phi = \alpha B.\phi + \beta t B.\phi
       = \alpha B.\phi + \beta D.\phi
       = (\alpha B + \beta D).\phi,
\]
where we used $tB.\phi = t\phi' = D.\phi$.
 Since the vectors
$\phi(t)$ span $\CC^n$, we conclude $A = \alpha B + \beta D$,

Finally, suppose $(e_1, \ldots, e_n) = (1-n, 2-n, \ldots, 0)$. Consider the composition of the substitution $t \mapsto t^{-1}$ and the coordinate change $x_i \mapsto x_{n+1-i}$. This sends $\phi(t)$ to  the $(0,1,\ldots,n-1)$ case. Under this transformation, $ \diag(1-n,2-n, \ldots,  0) \mapsto -\diag(0,1 \ldots, n-1)$ and $B \mapsto C$, concluding that~$\mathfrak{g}_X = \langle D, C \rangle$.
\end{proof}
}}

The situation analyzed in \Cref{lem:monomial curves} is the general 
situation whenever $\frakg_X\neq \{0\}$.
{{
\begin{thm}\label{thm: Curves with nonzero symmetry}
Fix $n \geq 2$, let $\phi = (f_1,\ldots,f_n)\colon \CC \dto \CC^n$ 
be a rational map and  $X =\overline{\im(\phi)}$. Assume that 
the linear span of $X$ is all of $\CC^n$. 
Then $\frakg_X\neq \{0\}$ if 
and only if, after a linear change of coordinates on $\CC^n$, we have 
$f_i = r^{e_i}$ for some non-constant rational function $r$ and integer 
exponents $e_1 < e_2 < \cdots < e_n$ with $\gcd(e_1,\ldots,e_n)=1$. 
Moreover, $\frakg_X$ is one-dimensional, unless $(e_1,\ldots,e_n)$ is 
$(0,\ldots,n-1)$ or $(-n+1,\ldots,0)$, in which case $\frakg_X$~is~two-dimensional.
\end{thm}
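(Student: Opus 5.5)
The \emph{if} direction and the ``moreover'' part reduce to \Cref{lem:monomial curves}. Indeed, if after a linear change of coordinates $f_i = r^{e_i}$, then $X$ is the image closure of $(t^{e_1},\ldots,t^{e_n})$ precomposed with the dominant map $r\colon \CC \dto \CC$. Hence $X$ coincides with the monomial curve of \Cref{lem:monomial curves}, and its symmetry Lie algebra contains $D \neq 0$. Conjugating by the coordinate change does not affect dimensions, so the dimension count also follows from that lemma.

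For the \emph{only if} direction, the plan is as follows.

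\textbf{Step 1: a nonzero semisimple element.} Take $0 \neq A \in \frakg_X$. First I would show that $\frakg_X$ contains a nonzero semisimple element; otherwise $G_X^0$ is unipotent. In the unipotent case, I would argue that a unipotent one-parameter group $\exp(sN)$ preserving a curve $X$ whose span is $\CC^n$ must act on $X$ with a dense orbit, since it acts nontrivially. This is because $A.q \neq 0$ for generic $q$: otherwise $A$ kills the span of $X$, which is $\CC^n$. So $X$ is an orbit closure $\overline{\{\exp(sN)q\}}$, i.e.\ a polynomial curve $s \mapsto \sum_k s^k N^k q/k!$. Since $X$ spans $\CC^n$, $N$ is a single nilpotent Jordan block and $q$ is cyclic. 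In a suitable basis this gives exactly the rational normal curve $(1,s,\ldots,s^{n-1})$ up to triangular coordinate change, which then has a diagonal symmetry as well. Thus we may assume $\frakg_X$ contains a semisimple $A_s \neq 0$, as in \Cref{lm:Cartan}, applied to a maximal torus of $G_X^0$.

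\textbf{Step 2: reduction to a torus orbit.} Diagonalize so that the maximal torus $S \subseteq G_X^0$ is diagonal, with $\dim S \geq 1$. Since $X$ spans $\CC^n$, no coordinate vanishes identically on $X$. For generic $q \in X$, the vector $A.q$ is nonzero for $0 \neq A \in \mathrm{Lie}(S)$, so the $S$-orbit of $q$ is one-dimensional. It is contained in the irreducible curve $X$ and hence dense. A one-dimensional orbit of a diagonal torus through a point with all coordinates nonzero is the image of a one-parameter subgroup $u \mapsto (q_1u^{e_1},\ldots,q_nu^{e_n})$ with integer $e_i$ and $\gcd = 1$. This holds because the image of $S$ in $\GL_n$ acting on the orbit factors through a rank-one character lattice. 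Rescaling the coordinates removes the $q_i$.

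\textbf{Step 3: distinct exponents and the function $r$.} The $e_i$ must be pairwise distinct: if $e_i = e_j$ for $i \neq j$, then $X$ lies in the hyperplane $y_i = c\,y_j$, contradicting nondegeneracy. After permuting coordinates, $e_1 < \cdots < e_n$. Finally, $X$ is the closure of $\{(u^{e_1},\ldots,u^{e_n})\}$, and this parametrization is birational onto $X$ because the gcd is $1$. Therefore $u$ is a rational function on $X$, and $r \coloneq u\circ\phi$ is a rational function of $t$ with $f_i = r^{e_i}$ in the new coordinates. It is non-constant because $\phi$ is non-constant.

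\textbf{Main obstacle.} The hardest step is Step 1: excluding, or directly handling, the case where $\frakg_X$ consists only of nilpotent elements. I would try the orbit-closure argument above, using \Cref{thm: frakg via tangent spaces} to see that $A$ is tangent along $X$, so that the flow of the one-parameter subgroup preserves $X$.
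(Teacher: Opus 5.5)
Your proof is correct. For the toral case (your Steps 2--3) it is essentially the paper's Case 1. A one-dimensional torus in $G_X^0$, once diagonalized, has a dense orbit on $X$. Nondegeneracy forces the exponents to be distinct, and $r$ is recovered from $\sum b_i e_i = 1$: your birational inverse $u=\prod y_i^{b_i}$ is exactly the paper's $r=\prod f_i^{b_i}$. You also supply a detail the paper skips: the orbit is dense because the stabilizer is trivial at points with all coordinates nonzero, and this same fact forces $\dim S = 1$.

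The real difference is the unipotent case. The paper stays inside the Jacobian criterion. It arranges $\deg f_1<\cdots<\deg f_n$ and compares degrees in $A.\phi=c\,\phi'$ to show every nonzero $A\in\frakg_X$ has rank $n-1$. It then conjugates $A$ to the Jordan block $B$ and integrates the recursion $f_i'=(i-1)c\,f_{i-1}$ to get $f_i=f_2^{\,i-1}$. You instead exponentiate. Since $\exp(sN)\in G_X^0$, the orbit of a generic $q$ is a nonconstant polynomial curve and hence dense in $X$. Nondegeneracy makes $q$ a cyclic vector, and in the basis $N^kq/k!$ the curve is literally $(1,s,\dots,s^{n-1})$.

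Your route avoids the degree bookkeeping, which needs a little care when some $\deg f_i=0$, and the ODE integration. It also treats both cases uniformly as orbit closures of one-parameter subgroups. The cost is the group-theoretic input $\exp(sN)\in G_X$. For that you do not need to integrate a tangent vector field on a possibly singular $X$, as your final paragraph suggests. It follows directly from $G_X$ being a closed subgroup of $\GL_n$ with Lie algebra $\frakg_X$; for a unipotent group in characteristic $0$, $\exp$ is even an isomorphism $\frakg_X\to G_X^0$.
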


\begin{proof}
The $(\Leftarrow)$ direction follows from \Cref{lem:monomial curves}: 
if $f_i = r^{e_i}$ then $D = \diag(e_1,\ldots,e_n) \in \frakg_X$ is 
nonzero. 

We now prove $(\Rightarrow)$. The symmetry Lie algebra $\frakg_X$ is the Lie algebra of the algebraic 
group~$G_X$. We split the proof into two cases according to whether $\frakg_X$ 
contains a diagonalizable element or not.

\medskip
\noindent\textit{Case 1: $\frakg_X$ contains a diagonalizable element.}
Then $G_X$ contains a one-dimensional torus $T$, and up to a linear 
change of coordinates,
\(
T = \{\diag(t^{e_1},\ldots,t^{e_n}) \mid t \in \CC^*\}
\)
for suitable integers $e_1 \leq \cdots \leq e_n$ with 
$\gcd(e_1,\ldots,e_n)=1$. Now $X$ is an orbit closure of $T$, and 
after scaling coordinates we have 
\[
X = \overline{T\cdot\mathbf{1}} = 
\overline{\{(t^{e_1},\ldots,t^{e_n})\mid t\in\CC^*\}}.
\]
Since $X$ spans $\CC^n$, we have $e_1 < \cdots < e_n$. Note that 
$\prod_i f_i^{a_i}=1$ for all $a_1,\ldots,a_n\in\ZZ$, satisfying 
$\sum_i a_i e_i=0$. 
Choose $b_1,\ldots,b_n\in\ZZ$ with 
$\sum_{i=1}^n b_i e_i=1$ and define \[r=\prod_{i=1}^n f_i^{b_i}.\]
Then $f_j = r^{e_j}$ for $j=1,\ldots, n$. Indeed,
\[
\dfrac{f_j}{r^{e_j}} = f_j\Bigl(\prod_{i=1}^n f_i^{-b_i}\Bigr)^{e_j} 
= \prod_{i=1}^n f_i^{\delta_{ij}-e_j b_i} = 1,
\]
since  $\sum_{i=1}^n (\delta_{ij}-e_j b_i)e_i = e_j - e_j\sum_{i=1}^n b_i e_i = 0$. 

\medskip
\noindent\textit{Case 2: $\frakg_X$ contains no diagonalizable element.}
We show this leads to a contradiction. In this case $G_X^0$ is a 
unipotent algebraic group, so $\frakg_X$ consists entirely of nilpotent 
matrices. We claim $\frakg_X$ contains a nilpotent matrix of rank $n-1$.

After a linear change of coordinates, assume $d_i \coloneq \deg(f_i)$ 
satisfy $d_1 < \cdots < d_n$. By \Cref{cor: symmetry via jacobian}, 
any nonzero $A\in\frakg_X$ satisfies
\[
A.\phi(t) = c(t)\phi'(t) = c(t)(f_1'(t),\ldots,f_n'(t))^T
\]
for some nonzero $c(t)\in\CC(t)$ (nonzero since $X$ spans $\CC^n$). 
Among the $d_i$, at most one equals zero, so at most one entry of 
$\phi'(t)$ is identically zero, and the remaining entries have distinct 
degrees. Hence $c(t)\phi'(t)$ has image of dimension $n-1$, so $A$ 
has rank $n-1$, as claimed.
Since nilpotent matrices of rank $n-1$ form a single orbit under conjugation with
 $\GL_n$, after another linear change of coordinates we may assume 
$A = B$ from \Cref{lem:monomial curves}. Then $f_1'\equiv 0$, so 
$f_1 = 1$ after scaling; and $f_i'(t) = (i-1)c(t)f_{i-1}(t)$ for 
$i=2,\ldots,n$. Setting $c(t)=f_2'(t)$ and arguing by induction, each 
$f_i(t)$ is a linear combination of $1, f_2(t),\ldots,f_2^{i-1}(t)$ 
with leading coefficient $1$. After a further linear change of 
coordinates, $f_i = f_2^{i-1}$ for all $i$. After reparameterizing,
we arrive at the case $(e_1,\ldots,e_n)=(0,\ldots,n-1)$. But  
$D = \diag(0,1,\ldots,n-1)\in\frakg_X$ is diagonalizable from \Cref{lem:monomial curves}, 
contradicting our assumption.
\end{proof}
}}

In the case $(e_1,\ldots,e_n) = (0,\ldots,n-1)$, consider the affine 
space $\{y_1 = 1\} \subseteq \CC^n$ as an affine chart of $\PP^{n-1}$. 
The projective closure $\overline{X} \subseteq \PP^{n-1}$ is then a 
rational normal curve, and its intersection with the hyperplane at 
infinity is a single point $q$. In this case, the symmetry Lie algebra of $\overline{X} \subseteq \PP^{n-1}$ is a copy of $\fraksl_2$, and the two-dimensional algebra $\frakg_X = \langle D, B\rangle$ of \Cref{thm: Curves with nonzero symmetry} is the Borel subalgebra~of~$\fraksl_2$ stabilizing $q$.

More generally, when $(e_1,\ldots,e_n) = (e_1, e_1+1,\ldots,e_1+n-1)$ with $-n+2 \leq e_1 \leq -1$, the curve $X$ is the complement of a hyperplane section of a rational normal curve $\overline{X}$. In this case, the hyperplane section consists of two points $\overline{X}$, and $\frakg_X$ is the one-dimensional subalgebra of~$\fraksl_2$ stabilizing both points, namely the Cartan subalgebra spanned by $D$.

{{We illustrate \Cref{thm: Curves with nonzero symmetry} in the case of the binary one-stage tree models from \Cref{sec:computational experiments}. 
\begin{ex}\label{ex: staged tree curve}
Consider the curve $X$ parametrized by
   \begin{figure}[ht]
       \centering  \includegraphics[width=0.4\linewidth]{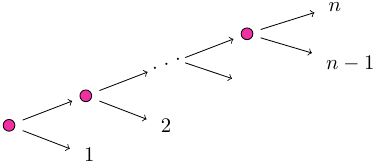}
       \caption{A stage tree model realized by the rational normal curve of degree $n-1$.}
  \label{fig:binaryonestage}
   \end{figure}
    \[
    \phi(\theta) = ( 1-\theta, \theta(1-\theta), \theta^2(1-\theta) ,\ldots, \theta^{n-2}(1-\theta), \theta^{n-1}) \in \CC^{n}.
    \]
This is the parametrization for the binary staged tree model in \Cref{fig:binaryonestage}. 

The matrices
\begin{align*}\label{eq: staged tree basis for g_X}
\small
D' =
\begin{bmatrix}
0 & -1 & -1 & \cdots & -1 & -1 \\
0 & 1 & -1 & \cdots & -1 & -1 \\
0 & 0 & 2 & \cdots & -1 & -1 \\
0 & 0 & 0 & \ddots & \vdots & \vdots \\
0 & 0 & 0 & \cdots & n-2 & -1 \\
0 & 0 & 0 & \cdots & 0 & n-1
\end{bmatrix} \ \text{ and }
\ 
B' = 
\begin{bmatrix}
-1 & -1 & -1 & \cdots & -1 & -1 \\
1 & -1 & -1 & \cdots & -1 & -1 \\
0 & 2 & -1 & \cdots & -1 & -1 \\
0 & 0 & 3 & \cdots & -1 & -1 \\
\vdots & \vdots & \vdots & \ddots & \vdots & \vdots \\
0 & 0 & 0 & \cdots & n-1 & -1
\end{bmatrix}
\end{align*}
 are in $\frakg_X$ as they satisfy \Cref{cor: symmetry via jacobian}. Now, \Cref{thm: Curves with nonzero symmetry} guarantees that $\dim \frakg_X = 2$ and that $X$ is the rational normal curve of degree $n-1$. After the change of coordinates given by 
\[
g=\begin{bmatrix}
1 & -1 & 0 & \cdots & 0 & 0 \\
0 & 1 & -1 & \cdots & 0 & 0 \\
0 & 0 & 1 & \cdots & 0 & 0 \\
\vdots & \vdots & \vdots & \ddots & \vdots & \vdots \\
0 & 0 & 0 & \cdots & 1 & -1 \\
0 & 0 & 0 & \cdots & 0 & 1
\end{bmatrix}
\]
we have $g^{-1} D'g = D$ and $g^{-1}B'g = B$ where $D$ and $B$ are the matrices of \Cref {lem:monomial curves} and $g^{-1} \phi(\theta) = (1, \theta ,\ldots, \theta^{n-1})$. 
\end{ex}
}}

\section{Secant varieties} \label{sec:Secants}

Let $X \subseteq \CC^n$ be an algebraic variety invariant under rescaling, that is if $q \in X$ then $\lambda q \in X$ for all $\lambda \in \CC$. Note that in this case $T_q X = T_{\lambda q} X$ for all $\lambda \neq 0$. The variety
\[
\sigma_r (X) = \overline{\{ q_1 + \cdots + q_r \mid q_i \in X\}}
\]
is called the $r$-th secant variety of $X$. It is immediate that $G_X \subseteq G_{\sigma_r(X)}$. There are examples for which the inclusion is strict, see, e.g., \cite[Prop. 4.1]{GHL}. In this section, we provide a sufficient condition for which, instead, equality holds at least at the level of the Lie algebra. Note that a standard parameter count provides 
\[
\dim \sigma_r(X) \leq r \dim X ;
\]
we say that $X$ has the \emph{expected dimension} if equality holds. Terracini's Lemma describes the tangent space to $\sigma_r(X)$ at a point $z = q_1 + \cdots + q_r$ with $q_i \in X$ sufficiently general as:
\[
T_z \sigma_r(X) = T_{q_1} X + \cdots + T_{q_r} X.
\]
Note that $\sigma_r(X)$ has the expected dimension if and only if the sum above is a direct sum. In particular, if $\sigma_r(X)$ has the expected dimension, the same holds for  $\sigma_s(X)$ if $s \leq r$.
\begin{thm}\label{thm: secant varieties lie algebra}
Let $X$ be an algebraic variety invariant under rescaling such that $\sigma_{r+1}(X)$ has the expected dimension. Then 
\(
\frakg_{\sigma_r(X)} = \frakg_X.
\)
\end{thm}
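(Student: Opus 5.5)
The inclusion $\frakg_X \subseteq \frakg_{\sigma_r(X)}$ is immediate, and I would argue it in either of two ways. One way is at the group level: $G_X \subseteq G_{\sigma_r(X)}$ gives the inclusion of Lie algebras. The other is via \Cref{thm: frakg via tangent spaces}. If $A \in \frakg_X$ and $z = q_1+\cdots+q_r$ with the $q_i \in X$ general, then
\[
A.z = \textstyle\sum_i A.q_i \in \sum_i T_{q_i}X = T_z\sigma_r(X)
\]
by Terracini's Lemma. Such points $z$ form a dense subset of $\sigma_r(X)$, so $A \in \frakg_{\sigma_r(X)}$. The case $r=1$ is trivial, so the real content is the reverse inclusion for $r \geq 2$.

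For the reverse inclusion, fix $A \in \frakg_{\sigma_r(X)}$. I would first use the scaling invariance to isolate a single summand. For general $q_1,\ldots,q_r \in X$ and general $\lambda \in \CC^*$, both $z=q_1+\cdots+q_r$ and $z_\lambda = \lambda q_1 + q_2+\cdots+q_r$ are general points of $\sigma_r(X)$. Their tangent spaces coincide with $W = T_{q_1}X \oplus \cdots \oplus T_{q_r}X$, because $T_{\lambda q_1}X = T_{q_1}X$. Then \Cref{thm: frakg via tangent spaces} gives $A.z, A.z_\lambda \in W$, and subtracting yields $(\lambda-1)A.q_1 \in W$. Hence
\[
A.q_1 \in T_{q_1}X \oplus T_{q_2}X \oplus \cdots \oplus T_{q_r}X
\]
for general $(q_1,\ldots,q_r)$; the sum is direct since $\sigma_r(X)$ also has the expected dimension.

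Next I would use the hypothesis on $\sigma_{r+1}(X)$ to kill the extra components. Take a general $(r+1)$-tuple $q_1,\ldots,q_{r+1}$, so that $T_{q_1}X+\cdots+T_{q_{r+1}}X$ is direct by Terracini and the expected-dimension assumption. Apply the previous step to the $r$-tuple $(q_1,q_2,\ldots,q_r)$ and to the $r$-tuple obtained by replacing $q_2$ with $q_{r+1}$. This writes $A.q_1$ in two ways inside the direct sum $\bigoplus_{i=1}^{r+1}T_{q_i}X$, one with no $T_{q_{r+1}}X$-component and one with no $T_{q_2}X$-component. Uniqueness of the decomposition forces the $T_{q_2}X$-component to vanish. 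Repeating the swap for each $j=2,\ldots,r$ shows all components other than the $T_{q_1}X$ one vanish, so $A.q_1 \in T_{q_1}X$. Since $q_1$ ranges over a dense subset of $X$, \Cref{thm: frakg via tangent spaces} gives $A \in \frakg_X$.

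The main point needing care is genericity bookkeeping. The argument requires that each of the finitely many $r$-subtuples of the $(r+1)$-tuple, together with the rescaled versions, is general enough for Terracini's Lemma and for \Cref{thm: frakg via tangent spaces} to apply. It also requires that the full $(r+1)$-tuple gives a direct sum. Each of these is a nonempty Zariski open condition on $X^{r+1}$ (respectively on $X^{r+1}\times\CC^*$), so their intersection is dense. Its projection to the first factor is then dense in $X$, which is exactly what the final appeal to \Cref{thm: frakg via tangent spaces} needs.
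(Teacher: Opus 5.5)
Your argument is correct. It uses the same ingredients as the paper: \Cref{thm: frakg via tangent spaces}, Terracini's Lemma, rescaling one summand, and directness of the $(r+1)$-fold sum. What differs is how you organize them.

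The paper inducts on $r$. With $y=q_1+\cdots+q_{r-1}$, $z=y+q_r$ and $z'=y+q_r'$, directness gives $T_y\sigma_{r-1}(X)=T_z\sigma_r(X)\cap T_{z'}\sigma_r(X)$. Rescaling $q_r$ and setting $\lambda=0$ puts $A.y$ in both tangent spaces, so $A\in\frakg_{\sigma_{r-1}(X)}$. The induction hypothesis then finishes; it uses that $\sigma_r(X)$ also has the expected dimension.

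You instead extract the single summand $A.q_1$ rather than the partial sum $A.y$. Your subtraction of two values of $\lambda$ is the algebraic form of the paper's continuity step, since $A.z_\lambda$ is affine in $\lambda$ and so both pieces lie in the fixed subspace. You then cut down to $T_{q_1}X$ in one stroke by intersecting $r$ coordinate subspaces of a single $(r+1)$-fold direct sum. Because you swap in $q_{r+1}$ one index at a time, only the directness for $\sigma_{r+1}(X)$ is ever used.

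For $r=2$ the two proofs essentially coincide. For larger $r$ yours is an unrolled version of the induction. It lands directly in $T_{q_1}X$ without invoking the statement for smaller secant orders. The cost is that all genericity conditions (for $r$ different $r$-subtuples and a rescaling parameter) are collected into one step, and you handle that bookkeeping correctly. The paper's inductive step needs only two $r$-tuples sharing $r-1$ points and the identity $T_y\sigma_{r-1}(X)=T_z\sigma_r(X)\cap T_{z'}\sigma_r(X)$.
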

\begin{proof}
We proceed by induction on $r$. For $r = 1$, there is nothing to prove. Let $r \geq 2$ and assume $\frakg_X = \frakg_{\sigma_{r-1}(X)}$; we will prove that $\frakg_X = \frakg_{\sigma_r(X)}$. The inclusion $\frakg_X \subseteq \frakg_{\sigma_r(X)}$ is clear. To prove the reverse inclusion, let $A \in \frakg_{\sigma_r(X)}$. By \Cref{thm: frakg via tangent spaces}, for any sufficiently general~$z \in \sigma_r(X)$, we have $A.z \in T_z \sigma_r(X)$.

Let $q_1 ,\ldots, q_{r-1} ,q_{r},q_{r}' \in X$ be elements chosen in a suitable Zariski open set and let 
\begin{align*}
y &= q_1 + \cdots + q_{r-1}, \\
z &= q_1 + \cdots + q_{r-1} + q_r, \\
z' &= q_1 + \cdots + q_{r-1} + q_{r}', \\
w &= q_1 + \cdots + q_{r-1} + q_r + q_r'.
\end{align*}
By Terracini's Lemma, we have
\begin{align*}
T_y \sigma_{r-1} (X) &= T_{q_1} X + \cdots + T_{q_{r-1}} X, \\
T_z \sigma_r(X) &= T_{q_1} X + \cdots + T_{q_{r-1}} X + T_{q_r} X, \\ 
T_{z'} \sigma_r(X) &= T_{q_1} X + \cdots + T_{q_{r-1}} X + T_{q_{r}'} X, \\
T_w \sigma_{r+1}(X) &= T_{q_1} X + \cdots + T_{q_{r-1}} X + T_{q_r} X + T_{q_r'} X.
\end{align*}
Since $\sigma_{r+1}(X)$ has the expected dimension, these are direct sums, showing that 
\[
T_y \sigma_{r-1}(X) = T_z \sigma_r(X) \cap T_{z'} \sigma_r(X).
\]
By linearity of $A$, and the fact that $T_{\lambda q_r} X = T_{q_r} X$, we deduce that $A.y \in T_y \sigma_{r-1}(X)$. Indeed, for almost every $\lambda$, $z_\lambda = q_1 + \cdots + q_{r-1} + \lambda q_r$ satisfies
\[
A.z_\lambda \in T_{z_\lambda} \sigma_r(X) = T_{z} \sigma_r(X);
\]
by continuity and by linearity, since $y = z_\lambda|_{\lambda = 0}$, we have $A.y \in T_{z} \sigma_r(X)$. Similarly $A.y \in T_{z'} \sigma_r(X)$. We deduce $A.y \in T_{z} \sigma_r(X) \cap T_{z'} \sigma_r(X) = T_y \sigma_{r-1}(X)$. This shows $A \in \frakg_{\sigma_{r-1}(X)}$. By the induction hypothesis $\frakg_{\sigma_{r-1}(X)} = \frakg_X$,  which implies $A \in \frakg_X$ as desired.
\end{proof}

The study of dimension of secant varieties is a classical theme in algebraic geometry. Extensive research is done especially on the case of Segre-Veronese varieties. Let $V_1 ,\ldots, V_k$ be vector spaces and let $d_1 ,\ldots, d_k$ be positive integers. Let $S^{d_i}V_i \subseteq V_i^{\otimes d_i}$ be the $d_i$-th symmetric power of $V_i$, that is the space of tensors which are invariant under the action of $\frakS_{d_i}$ on $V_i^{\otimes d_i}$ which permutes the tensor factors. The Segre-Veronese variety is 
\[
X = \{v_1^{\otimes d_1} \ootimes v_k ^{\otimes d_k}  \mid v_i \in V_i \} \subseteq S^{d_1} V_1 \otimes \cdots \otimes S^{d_k} V_k.
\]
When $k =1$, $X$ is the Veronese variety, whose secant varieties have the expected dimension except for a small number of classical exceptions \cite{AlHir}; the classification of secant varieties having dimension smaller than expected is almost complete for the case of Segre varieties, that is when $d_1 = \cdots = d_k = 1$ \cite{BocChiOtt}; partial results in several regimes are known for the general setting \cite{AboBra,ABGO24,DK25}. 

The symmetry Lie group of Segre-Veronese varieties in $\GL( S^{d_1} V_1 \ootimes S^{d_k} V_k)$ is the image of the subgroup $(\GL(V_1) \ttimes \GL(V_k)) \rtimes \frakS$ where $\GL(V_i)$ acts on the factor $S^{d_i}V_i$ and $\frakS \subseteq \frakS_k$ is the subgroup that permutes the factors of the same dimension that have the same degree \cite[Prop. 2.2]{GHL}. The following result is a direct application of \Cref{thm: secant varieties lie algebra}.

\begin{cor}\label{cor: secant varieties}
Let $X$ be a Segre-Veronese variety such that $\sigma_{r+1}(X)$ has the expected dimension. Then
\[
G_{\sigma_r(X)} = G_X \cong (\GL(V_1) \ttimes \GL(V_k))/(\CC^{\times})^{k-1} \rtimes \frakS,
\]
where $(\CC^{\times})^{k-1}$ is the kernel of the action.
\end{cor}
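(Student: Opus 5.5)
The argument has three stages: pass from Lie algebras to identity components, compare the full groups, and then identify $G_X$.

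\emph{Identity components.} By \Cref{thm: secant varieties lie algebra}, $\frakg_{\sigma_r(X)} = \frakg_X$. We also have the inclusion $G_X \subseteq G_{\sigma_r(X)}$. Two closed subgroups of $\GL$ with the same Lie algebra have the same identity component, so $G^0_{\sigma_r(X)} = G^0_X$.

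\emph{Full groups.} To get equality of the full groups, I would show that every $g \in G_{\sigma_r(X)}$ preserves $X$. Such a $g$ normalizes $G^0_{\sigma_r(X)} = G^0_X$, which is the image of $\GL(V_1)\times\cdots\times\GL(V_k)$ by \cite[Prop.~2.2]{GHL}. In the Segre--Veronese space, $X$ should be characterized as the unique closed orbit of this group in the projectivization, namely the orbit of highest weight vectors. Indeed, the space $S^{d_1}V_1\otimes\cdots\otimes S^{d_k}V_k$ is an irreducible representation of $\GL(V_1)\times\cdots\times\GL(V_k)$, so its projectivization has a unique closed orbit, and the cone over that orbit is $X$. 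Conjugation by $g$ maps $G^0_X$ to itself, so $g$ permutes the $G^0_X$-orbits and preserves the property of being closed. Hence $g$ maps the unique closed orbit to itself, that is, $gX = X$, and so $g \in G_X$.

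\emph{Identifying $G_X$.} It then remains to quote \cite[Prop.~2.2]{GHL} for $G_X$ as the image of $(\GL(V_1)\times\cdots\times\GL(V_k))\rtimes\frakS$. The kernel of the action is the subgroup of scalar tuples $(\lambda_1\Id,\ldots,\lambda_k\Id)$ with $\prod_i\lambda_i^{d_i}=1$. I would check directly that this is a $(k-1)$-dimensional torus (possibly disconnected, which the notation in the statement glosses over).

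The main obstacle is the step from equal Lie algebras to equal full groups. The normalizer argument needs irreducibility of the representation under the connected group, so that the closed orbit is unique. If that argument ran into difficulty, I would instead use that $g$ preserves the singular locus stratification of $\sigma_r(X)$ and descend inductively to $\sigma_1 = X$, but I expect the closed-orbit argument to suffice.
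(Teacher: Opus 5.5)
Your proof is correct and has the same structure as the paper's proof. The paper also passes from $\frakg_{\sigma_r(X)}=\frakg_X$ to $G^0_{\sigma_r(X)}=G^0_X$. It then observes that $G_{\sigma_r(X)}$ normalizes this identity component, and cites \cite[Prop.~3.3]{GHL} for the fact that the normalizer of $G^0_X$ is all of $G_X$.

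The only real difference is that you prove the needed inclusion $N(G^0_X)\subseteq G_X$ yourself rather than citing it. Your argument uses the uniqueness of the closed orbit in $\PP(S^{d_1}V_1\otimes\cdots\otimes S^{d_k}V_k)$ under the connected group acting irreducibly. That uniqueness follows from the Borel fixed point theorem: every closed orbit contains the unique $B$-stable line, so it is the highest-weight orbit, and $X$ is the cone over it. This makes the argument self-contained, and it only needs the inclusion, not the equality of the normalizer with $G_X$.

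Your remark on the kernel is also right. It equals $\{(\lambda_1\Id,\ldots,\lambda_k\Id)\mid \prod_i\lambda_i^{d_i}=1\}\cong(\CC^\times)^{k-1}\times\mu_{\gcd(d_1,\ldots,d_k)}$. So the label $(\CC^\times)^{k-1}$ in the statement is loose when $\gcd(d_1,\ldots,d_k)>1$; for example, a Veronese variety with $d\geq 2$ has finite kernel $\mu_d$.
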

\begin{proof}
The assumption on the dimension, together with \Cref{thm: secant varieties lie algebra}, guarantees~that~$\frakg_{\sigma_r(X)} = \frakg_X$ and therefore $G^0 _{\sigma_r(X)} = G^0_ X$ is the image of $\GL(V_1) \ttimes \GL(V_k)$ in $\GL( S^{d_1} V_1 \ootimes S^{d_k} V_k)$. Since the identity component is a normal subgroup of the symmetry group, we have $G_{\sigma_r(X)} \subseteq N_{\GL( S^{d_1} V_1 \ootimes S^{d_k} V_k)} ( G_{\sigma_r(X)}^{\circ})$. This normalizer is the whole $ G_X $, see \cite[Prop. 3.3]{GHL}.
\end{proof}

We point out that in many cases the symmetry Lie algebra of an $r$-th secant variety coincides with that of the base variety, even if the $(r+1)$-th (and often even the $r$-th) secant variety has dimension smaller than expected. A classical example are  secant varieties of Segre varieties with two factors, that is, varieties of matrices of bounded rank, which have the expected symmetry Lie algebra whenever they do not fill the ambient space \cite{GUTERMAN200061}. The Segre varieties of $3$ factors of dimension $3$, and of $4$ factors of dimension $2$ have defective secant variety for~$r = 4$ and~$r=3$ respectively; however, the results of \cite[Sec. 4, Sec. 9]{GHL} show that all secant varieties of these Segre varieties have the expected symmetry group. In fact, the only examples that we know for which a secant variety of a Segre-Veronese variety has symmetry Lie algebra larger than the one of the base variety are pencils of matrices \cite[Prop. 4.1]{GHL}.

{
\bibliographystyle{alphaurl}
\bibliography{sym.bib}
}

\end{document}